\documentclass[11pt]{amsart}
\topmargin=-3cm \hoffset=-1.8cm \voffset=2cm \textheight=246mm
\textwidth=160mm

\usepackage{amsfonts}
\usepackage{amsmath}
\usepackage{amssymb}
\usepackage{url}
\usepackage{multicol}
\usepackage{stmaryrd}
\usepackage{cite}
\usepackage{epsfig}
\usepackage{color}
\usepackage{graphics}
\usepackage{graphicx}
\usepackage{epstopdf}
\usepackage{multicol,graphics}
\newcommand\bes{\begin{eqnarray}}
\newcommand\ees{\end{eqnarray}}

\newtheorem{theorem}{Theorem}[section]
\newtheorem{lemma}[theorem]{Lemma}

\newtheorem{definition}[theorem]{Definition}
\newtheorem{remark}[theorem]{Remark}
\newtheorem{proposition}[theorem]{Proposition}
\numberwithin{equation}{section}
\allowdisplaybreaks

\begin{document}
\title[Asymptotic Behaviors for Nonlocal Diffusion Equations]{\textbf{Asymptotic Behaviors for Nonlocal Diffusion Equations about the Dispersal Spread}}

\author[Y.H. Su, W.T. Li and F.Y. Yang]{Yuan-Hang Su$^{1}$, Wan-Tong Li$^{1,*}$ and Fei-Ying Yang$^{1}$}
\thanks{\hspace{-.6cm}
$^1$ School of Mathematics and Statistics, Lanzhou University, Lanzhou, Gansu, 730000, People's Republic of China.
\\
$^*${\sf Corresponding author} (wtli@lzu.edu.cn)}

\date{\today}

\begin{abstract}
This paper studies the effects of the dispersal spread, which characterizes the dispersal range, on nonlocal diffusion equations with the nonlocal dispersal operator $ \frac{1}{\sigma^{m}}\int_{\Omega}J_{\sigma}(x-y)(u(y,t)-u(x,t))dy$ and Neumann boundary condition in the spatial heterogeneity environment. More precisely, we are mainly concerned with asymptotic behaviors of generalised principal eigenvalue to the nonlocal dispersal operator, positive stationary solutions and solutions to the nonlocal diffusion KPP equation in both large and small dispersal spread. For large dispersal spread, we show that their asymptotic behaviors are unitary with respect to the cost parameter $m\in[0,\infty)$.
However, small dispersal spread can lead to different asymptotic behaviors as the cost parameter $m$ is in a different range. In particular, for the case $m=0$, we should point out that asymptotic properties for the nonlocal diffusion equation with Neumann boundary condition are different from those for the nonlocal diffusion equation with Dirichlet boundary condition.

\textbf{Key words:} Nonlocal diffusion equation; Neumann boundary condition; Dispersal spread; Generalised principal eigenvalue; Asymptotic behavior.

\textbf{AMS Subject Classification (2010):} 35R09; 45C05; 45M05; 45M20; 92D25.
\end{abstract}

\maketitle


\section{Introduction}
\noindent

In this paper, we are interested in the following nonlocal diffusion equation
\begin{equation}  \label{101}
u_{t}(x,t)=\frac{1}{\sigma^{m}}\int_{\Omega}J_{\sigma}(x-y)(u(y,t)-u(x,t))
dy+f(x,u(x,t)),\ \  (x,t)\in \Omega\times(0,\infty),
\end{equation}
where $\Omega\subset\mathbb{R}^N$ is a bounded smooth domain, $u(x,t)$ represents the population density at location $x$ and time $t$, the dispersal spread $\sigma>0$ characterizes the dispersal range, the cost
parameter $m\in[0,\infty)$,
$J_{\sigma}(\cdot)=\frac{1}{\sigma^{N}}J(\frac{\cdot}{\sigma})$ is the scaled dispersal kernel. The nonlocal diffusion is described by
\begin{equation*}
M_{\sigma,m,\Omega}[u](x,t):=
\frac{1}{\sigma^{m}}\int_{\Omega}J_{\sigma}(x-y)(u(y,t)-u(x,t))dy.
\end{equation*}
Since we only integrate over $\Omega$, we assume that diffusion takes place only in $\Omega$. The individuals may not enter or leave the domain, which is called \textbf{nonlocal Neumann boundary condition}, see Andreu-Vaillo et al. \cite{Andreu-2010-AMS} and Cort\'{a}zar et al. \cite{Rossi-2007-JDE}.
Throughout this paper, we will always make the following assumptions on the dispersal kernel $J$ and the nonlinear function $f$:
\begin{itemize}
\item[(J)] $J\in C(\mathbb{R}^N)$ is nonnegative symmetric with
compact support on the unit ball $B_{1}(0)$, $J(0)>0$ and $\int_{\mathbb{R}^N}J(z)dz=1$.
\item[(F)] $f\in C^{0,1}(\bar{\Omega}\times\mathbb{R})$ is of KPP type,
that is:
\begin{eqnarray*}
\begin{cases}
f(\cdot,0)\equiv 0,\\
\text{For all} \ x\in \bar{\Omega}, f(x,s)/s \ \text{is decreasing with respect
to} \ s  \ \text{on} \ (0,+\infty),\\
\text{There exists} \ M>0 \ \text{such that} \ f(x,s)\leq 0 \ \text{for all} \
s\geq M \ \text{and all} \ x\in \bar{\Omega}.\\
\end{cases}
\end{eqnarray*}
\end{itemize}
A typical example of such a nonlinearity is given by $f(x,s):=s(a(x)-s)$ with $a\in C(\bar{\Omega})$.

Nonlocal dispersal equations of the form \eqref{101} are widely used to model dispersal phenomena which exhibit nonlocal internal interactions and have attracted much attention, see Bates and Zhao \cite{Bates-2007-JMAA}, Berestycki et al. \cite{Berestycki-2016-JMB}, Cao et al. \cite{CDLL-2018}, Chasseigne et al. \cite{Rossi-2006-JMPA}, Cort\'{a}zar et al. \cite{Rossi-2007-JDE}, Coville \cite{Coville-2010-JDE,Coville-2015-DCDS}, Coville et al. \cite{Coville-2013-Poincar}, Fife \cite{Fife-2003-Trends}, Hutson et al. \cite{Hutson-2003-JMB}, Kao et al. \cite{Kao-2010-DCDS}, Li et al. \cite{LSW2010,LWZ2016,LWZJNS2018,LZZ2015}, Shen and Xie \cite{Shen-2015-JDE}, Shen and Zhang \cite{Shen-2010-JDE}, Su et al. \cite{Su-2018-submitted}, Sun et al. \cite{SLW2011,SZLW2019}, Wang and Lv \cite{WMX}, Yang et al. \cite{Yang-2016-DCDSA} and Zhang et al. \cite{ZLW2017,ZLW2016,ZLWS2019}. From both mathematical and ecological points of view, nonlocal dispersal kernel functions can have a variety of forms. Here, we take the method given by Hutson et al. \cite{Hutson-2003-JMB} which the nonlocal dispersal kernel function can be quantitatively described by the dispersal rate $\mu$ and the dispersal spread $\sigma$. Moreover, they introduced the concept of a dispersal budget and showed that the dispersal rate is characterized by $\frac{\mu}{\sigma^{m}}$ under some conditions. From an evolutionary point of view, the species can ``choose'' to disperse a few offsprings over a long distance or many offsprings over a short distance or some compromise.
Recently, for $0\le m\le2$, Berestycki et al. \cite{Berestycki-2016-JFA, Berestycki-2016-JMB} have considered the asymptotic limits of generalised principal eigenvalue associated with the nonlocal dispersal operator $\frac{1}{\sigma^m}
\int_{\Omega}J_{\sigma}(x-y)(\varphi(y)-\varphi(x))dy$, and positive stationary solutions corresponding to the nonlocal diffusion equation. For a continuation of the works of Berestycki et al. \cite{Berestycki-2016-JFA, Berestycki-2016-JMB}, Shen and Vo \cite{shen-2019} studied the corresponding problems for the nonlocal dispersal equation with Dirichlet boundary condition in time-periodic media. Here, we study asymptotic behaviors for the nonlocal dispersal operator/equation with Neumann boundary condition.

This paper is devoted to discussing asymptotic behaviors of generalised principal eigenvalue to the nonlocal dispersal operator, positive stationary solutions and solutions to the nonlocal diffusion KPP equation with respect to the dispersal spread. Firstly, we consider the generalised principal eigenvalue to the nonlocal dispersal operator. It is known from \cite{Coville-2010-JDE,Donsker-1975-PNAS,Kao-2010-DCDS,Shen-2010-JDE} that
the operator $M_{\sigma ,m,\Omega }+a$ may not have any eigenvalues in the space $L^{p}(\Omega )$ or $C(\overline{\Omega })$.
Recently, there are some works on the nonlocal principal eigenvalue problems, see Berestycki et al. \cite{Berestycki-2016-JFA}, Coville \cite{Coville-2010-JDE}, Coville et al. \cite{Coville-2013-Poincar},
Diaconis et al. \cite{Diaconis-2011-IM},
Garc{\'{\i}}a-Meli{\'a}n and Rossi \cite{Rossi-2009-JDE}, Li et al. \cite{Li-2017-DCDS}, Liang et al. \cite{liang2017}, Shen and Xie \cite{Shen-2015-DCDSB}, Shen and Zhang \cite{Shen-2010-JDE}, Sun et al. \cite{Sun-2015-DCDS,SLY2014} and Yang et al. \cite{Yang-2016-DCDSA}. Their results imply that the generalised principal eigenvalue $\lambda_{p}(M_{\sigma,m,\Omega}+a)$:
\begin{align*}
\lambda_{p}(M_{\sigma,m,\Omega}+a):=\sup\big\{&\lambda\in \mathbb{R}\ |\ \exists \varphi\in C(\overline{\Omega}),\varphi>0, \ \text{such \ that }\\
&\ \ \ \ \ \ \ \ \ M_{\sigma,m,\Omega}[\varphi]+(a(x)+\lambda)\varphi\leq0\ \ \text{in}\ \Omega \big\},
\end{align*}
can be used as surrogates of a principal eigenvalue.
However, it is still hard to study asymptotic behaviors of $\lambda_{p}(M_{\sigma,m,\Omega}+a)$ about $\sigma$ since nonlocal
dispersal operators have not smoothness and certain compactness properties. In order to overcome these difficulties, we use a
combination of some estimations in \cite{Berestycki-2016-JFA,Coville-2010-JDE,Shen-2015-DCDSB},
new characterisation of Sobolev spaces in \cite{Bourgain-2001,Brezis-2001,Ponce-2004-JEMS} and some properties
of nonlocal dispersal operators with Neumann boundary condition to obtain asymptotic behaviors of $\lambda_{p}(M_{\sigma,m,\Omega}+a)$.
Secondly, we study positive stationary solutions to the nonlocal diffusion KPP equation.
An optimal persistence criterion about equation (\ref{101}) is obtained by the sign of the generalised principal eigenvalue, see Bates and Zhao \cite{Bates-2007-JMAA} and Coville \cite{Coville-2010-JDE}. Moreover, we analyze the effects of the dispersal spread and the dispersal budget on the positive stationary solutions. Owing to the lack of regularity of
the stationary solutions, we can not rely on standard compactness results
but take advantage of both new compactness results of Sobolev spaces in \cite{Bourgain-2001,Ponce-2004-JEMS} and some properties of nonlocal dispersal equations with Neumann boundary condition to get asymptotic behaviors of the positive stationary solutions. Finally, for the initial-boundary value problem of \eqref{101}, many works focus on small dispersal spread as $m=2$, see Andreu-Vaillo et al. \cite{Andreu-2010-AMS}, Shen and Xie \cite{Shen-2015-JDE} and reference therein. Here, we are based on upper-lower solutions
method to consider asymptotic behaviors of solutions to the nonlocal diffusion KPP equation \eqref{101} with respect to large dispersal spread and small dispersal spread. Particularly, the precise convergence rate is given in this paper.

Let us now state our main results. Firstly, we investigate the properties of $\lambda_p(M_{\sigma,m,\Omega}+a)$ with respect to the dispersal spread $\sigma$. More precisely, we get the continuous dependence of $\lambda_p(M_{\sigma,m,\Omega}+a)$ with respect to $\sigma$ and the limits of $\lambda_p(M_{\sigma,m,\Omega}+a)$ when $\sigma$ tends to zero or $+\infty$. It should be pointed out that the asymptotic limits of $\lambda_p(M_{\sigma,m,\Omega}+a)$ for $m=0$ can be discussed by the same method in \cite{Shen-2015-DCDSB}. Here,
we have the following main result.

\begin{theorem}
\label{th101} Assume that $J$ satisfies $(J)$, $a\in C(\bar{\Omega})$ and $m\in[0,\infty)$. Then $%
\lambda_p(M_{\sigma,m,\Omega}+a)$ is continuous with respect to $\sigma$.
Moreover, we have
\begin{equation*}
\lim\limits_{\sigma\to+\infty}
\lambda_p(M_{\sigma,m,\Omega}+a) =-\sup\limits_{\Omega}a.
\end{equation*}
\end{theorem}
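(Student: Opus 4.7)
The plan combines a variational-style comparison with a direct estimate on the size of $M_{\sigma,m,\Omega}$ in the large-$\sigma$ regime, and breaks into three steps. The key observation throughout is the Neumann identity $M_{\sigma,m,\Omega}[1]\equiv 0$, which provides a universally admissible test function and also explains why $\lambda_p$ cannot fall below $-\sup_\Omega a$. First I would derive the lower bound: since $\varphi\equiv 1\in C(\overline{\Omega})$ satisfies $M_{\sigma,m,\Omega}[1]\equiv 0$, admissibility in the definition of $\lambda_p$ reduces to $a(x)+\lambda\le 0$ on $\overline{\Omega}$, which holds iff $\lambda\le -\sup_\Omega a$. Hence $\lambda_p(M_{\sigma,m,\Omega}+a)\ge -\sup_\Omega a$ for every $\sigma>0$ and every $m\ge 0$.

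For the matching upper bound, I would fix any admissible pair $(\lambda,\varphi)$ with $\varphi\in C(\overline{\Omega})$, $\varphi>0$ on $\overline{\Omega}$. Extending the supersolution inequality from $\Omega$ to $\overline{\Omega}$ by the continuity of $M_{\sigma,m,\Omega}[\varphi]$, dividing by $\varphi(x)>0$, and evaluating at a point $x^\ast\in\overline{\Omega}$ with $a(x^\ast)=\sup_\Omega a$ yields
\begin{equation*}
\lambda \;\le\; -\sup_\Omega a + \frac{1}{\sigma^m}\int_\Omega J_\sigma(x^\ast-y)\Bigl(1-\frac{\varphi(y)}{\varphi(x^\ast)}\Bigr)\,dy.
\end{equation*}
Since $\varphi(y)/\varphi(x^\ast)\ge 0$, the integrand is dominated by $J_\sigma(x^\ast-y)$; using $J_\sigma(z)=\sigma^{-N}J(z/\sigma)$ together with the boundedness and compact support of $J$ gives $\int_\Omega J_\sigma(x^\ast-y)\,dy\le \|J\|_\infty|\Omega|\sigma^{-N}$. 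Combining these yields the $\varphi$-independent bound $\lambda \le -\sup_\Omega a + \|J\|_\infty|\Omega|/\sigma^{m+N}$. Passing to the supremum over admissible $\lambda$ and then letting $\sigma\to+\infty$ gives $\limsup_{\sigma\to+\infty}\lambda_p(M_{\sigma,m,\Omega}+a)\le -\sup_\Omega a$, which together with the lower bound establishes the asymptotic formula.

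For continuous dependence on $\sigma$, take $\sigma_k\to \sigma_0>0$. Lower semi-continuity is a perturbation argument: given $\lambda<\lambda_p(\sigma_0)$, fix a supersolution $\varphi_0>0$ for some intermediate $\lambda'\in(\lambda,\lambda_p(\sigma_0))$, so that the strict slack $(\lambda'-\lambda)\varphi_0\ge \delta>0$ on $\overline{\Omega}$. Since $\|M_{\sigma_k,m,\Omega}[\varphi_0]-M_{\sigma_0,m,\Omega}[\varphi_0]\|_\infty\to 0$ by dominated convergence and the continuity of $J_\sigma$ in $\sigma$, the slack absorbs this perturbation for all large $k$, so $\varphi_0$ remains admissible for $(\sigma_k,\lambda)$, yielding $\liminf\lambda_p(\sigma_k)\ge \lambda_p(\sigma_0)$. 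Upper semi-continuity is the step I expect to be the main obstacle, because nonlocal operators lack the compactness built into elliptic theory: the plan is to take near-optimal supersolutions $\varphi_k$ for $(\sigma_k,\lambda_p(\sigma_k)-1/k)$, normalise them in $L^\infty$, and extract a limit $\varphi_\infty$ using uniform integral bounds together with the Neumann structure $M_{\sigma,m,\Omega}[1]=0$ and the symmetry of $J_\sigma$. The delicate point is preserving strict positivity of $\varphi_\infty$ so that it remains admissible; this should follow from a quantitative lower bound on $\varphi_k$ obtained by evaluating the supersolution inequality at the minimum of $\varphi_k$, or alternatively from a Krein--Rutman argument applied to a regularised operator.
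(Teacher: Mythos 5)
Your treatment of the limit $\sigma\to+\infty$ is correct and is essentially the same as the paper's (Theorem 2.2): the test function $\varphi\equiv 1$ gives the lower bound $-\sup_\Omega a$, and an upper bound of the form $-\sup_\Omega a + O(\sigma^{-(m+N)})$ follows because $\int_\Omega J_\sigma(x-y)\,dy\to 0$; you even extract an explicit rate, which is a small bonus. Your lower semicontinuity argument for $\lambda_p(\sigma)$ (fixing a supersolution with strict slack and absorbing $\|(M_{\sigma_k}-M_{\sigma_0})[\varphi_0]\|_\infty$) is also sound.

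The gap is exactly where you flagged it: upper semicontinuity. Normalising near-optimal supersolutions $\varphi_k$ in $L^\infty$ gives no compactness. Nonlocal operators do not regularise, so the $\varphi_k$ need not be equicontinuous, and an $L^\infty$ bound only yields weak-$*$ convergence in $L^\infty$, which does not pass the pointwise supersolution inequality to the limit and gives no control whatsoever on $\inf\varphi_\infty$. "A quantitative lower bound by evaluating at the minimum of $\varphi_k$" does not produce one: at the minimum $x_k$ you get $M[\varphi_k](x_k)\ge 0$, hence only the scalar bound $\lambda_k\le -a(x_k)$, which says nothing about $\min\varphi_k$. And "Krein--Rutman applied to a regularised operator" is precisely the nontrivial step, not a shortcut. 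The paper takes a completely different route to continuity (its Theorem 2.1): it rewrites $M_{\sigma,m,\Omega}+a = L_{\sigma,m,\Omega}+b$ with $b=a-\sigma^{-m}p_\sigma$, constructs a nearby $b_\epsilon$ satisfying Coville's criterion $\frac{1}{\sup b_\epsilon - b_\epsilon}\notin L^1(\Omega)$ so that $\lambda_p(L_{\sigma,m,\Omega}+b_\epsilon)$ is a genuine isolated simple eigenvalue, applies Kato's analytic perturbation theory (since $U_{\sigma,\sigma_0}\to 0$ in operator norm) to get continuity in $\sigma$ for the approximated problem, and transfers back via the $1$-Lipschitz dependence of $\lambda_p$ on the potential. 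This sidesteps the compactness issue entirely. Alternatively, you could rescue your strategy using the variational characterisation $\lambda_p=\lambda_v$ (the paper's Lemma 2.3): $\lambda_v(\sigma)$ is an infimum over $\varphi\in L^2$ of functionals that are, for each fixed $\varphi$, continuous in $\sigma$; an infimum of continuous functions is upper semicontinuous, which combined with your lower semicontinuity argument gives continuity. Either way, the proposal as written does not close the upper semicontinuity gap.
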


\begin{remark}\label{re101}{\rm
(i) If $\lambda_p(M_{\sigma,m,\Omega}+a)$ is an isolated principal eigenvalue,
then $\lambda_p(M_{\sigma,m,\Omega}+a)$ is naturally continuous with respect
to the parameter $\sigma$, see Kato \cite{Kato-1995}. However, it is not clear for the generalised principal eigenvalue $\lambda_p(M_{\sigma,m,\Omega}+a)$. (ii) For the case $m=0$, Berestycki et al. \cite{Berestycki-2016-JFA} obtained the following result
\begin{equation*}
\lim\limits_{\sigma\to+\infty}
\lambda_p(M^{D}_{\sigma,m,\Omega}+a) =1-\sup\limits_{\Omega}a,
\end{equation*}
where nonlocal dispersal operator with Dirichlet boundary $M^{D}_{\sigma,m,\Omega}$ is
\begin{equation*}
M^{D}_{\sigma,m,\Omega}[\varphi](x):=\frac{1}{\sigma^{m}}
\bigg(\int_{\Omega}J_{\sigma}(x-y)\varphi(y)dy-\varphi(x)\bigg).
\end{equation*}
For Neumann boundary condition, we prove that
\begin{equation*}
\lim\limits_{\sigma\to+\infty}
\lambda_p(M_{\sigma,m,\Omega}+a) =-\sup\limits_{\Omega}a,
\end{equation*}
which is different from that of \cite{Berestycki-2016-JFA}.}
\end{remark}

It is an interesting question when the generalised principal eigenvalue is the principal eigenvalue. For the small dispersal spread, we prove that the generalised principal eigenvalue is really the principal eigenvalue in this paper. Moreover, the asymptotic limits of the principal eigenpair are also obtained.
Let us denote the second moments of $J$ by
$
D_{2}(J):=\int_{\mathbb{R}^{N}}J(z)|z|^{2}dz.
$
Then we have the following results.

\begin{theorem}
\label{th102} Assume that $J$ satisfies $(J)$ and $a\in C(\bar{\Omega})$. Then there exists $\sigma_{0}>0$ such that there is a positive continuous eigenfunction $\varphi_{p,\sigma}$ associated to $\lambda_p(M_{\sigma,m,\Omega}+a)$ with $\|\varphi_{p,\sigma}\|_{L^{2}(\Omega)}=1$ for all $0<\sigma\leq \sigma_{0}$.
Moreover, the asymptotic behaviors of principal eigenpair are divided into
the following three cases:
\begin{itemize}
\item[(i)] For $0\leq m<2$, we have
$
\lim\limits_{\sigma\to0^{+}}\lambda_p(M_{\sigma,m,\Omega}+a)=
-\sup\limits_{\Omega}a;
$
\item[(ii)] For $m=2$, assume further that $J$ is radially symmetric, $p_{\sigma}(x):=\int_{\Omega}J_{\sigma}(x-y)dy\in
C^{0,\alpha_{1}}(\overline{\Omega})$ with some $\alpha_{1}>0$ and $a\in C^{0,\alpha_{2}}(\overline{\Omega})$ with some $\alpha_{2}>0$. Then we have
$
\lim\limits_{\sigma\to0^{+}}\lambda_p(M_{\sigma,m,\Omega}+a)=
\lambda^{N}_1\Big(\frac{D_{2}(J)}{2N}\Delta+a\Big),
\text{ and }
\underset{\sigma\rightarrow 0^{+}}{\lim}\|\varphi_{p,\sigma}-\varphi_1\|_{L^{2}(\Omega)}=0,
$
where $(\lambda^{N}_{1}, \varphi_{1})$ with $\|\varphi_{1}\|_{L^{2}(\Omega)}=1$ is the principal eigenpair of
the following random dispersal operator
\begin{equation*}
\begin{cases}
\frac{D_{2}(J)}{2N}\Delta\varphi+(a(x)+\lambda)\varphi=0 & \text{ in }~
\Omega, \\
\frac{\partial\varphi}{\partial\nu}=0 & \text{ on }~\partial\Omega.%
\end{cases}%
\end{equation*}
Here, $\nu$ is the unit outward normal vector on $\partial\Omega$;
\item[(iii)] For $m>2$, if $J$ is radially symmetric, then we have
\begin{equation*}
\lim\limits_{\sigma\to0^{+}}\lambda_p(M_{\sigma,m,\Omega}+a)=
-\bar{a},
\end{equation*}
\begin{equation*}
\underset{\sigma\rightarrow 0^{+}}{\lim}\|\varphi_{p,\sigma}-
|\Omega|^{-\frac{1}{2}}\|_{L^{2}(\Omega)}=0,
\end{equation*}
where $\bar{a}:=\frac{1}{|\Omega|}\int_{\Omega}a(x)dx$.
\end{itemize}
\end{theorem}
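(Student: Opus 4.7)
The strategy is to put $\lambda_p(M_{\sigma,m,\Omega}+a)$ into variational form using the self-adjointness of the operator (granted by the symmetry of $J$), extract a positive principal eigenfunction by a spectral-gap/Krein-Rutman argument for small $\sigma$, and then carry out the asymptotic analysis in each of the three regimes through the Dirichlet-type energy
\begin{equation*}
\mathcal{E}_\sigma[\varphi]:=\frac{1}{2\sigma^{m}}\int_{\Omega}\int_{\Omega}J_{\sigma}(x-y)(\varphi(y)-\varphi(x))^{2}\,dy\,dx,
\end{equation*}
combined with the nonlocal characterisation of Sobolev spaces of Bourgain-Brezis-Mironescu and Ponce that was highlighted in the introduction.

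Write $M_{\sigma,m,\Omega}=K_\sigma-p_\sigma/\sigma^{m}$ with $K_\sigma$ compact self-adjoint on $L^{2}(\Omega)$ and $p_\sigma(x):=\int_\Omega J_\sigma(x-y)\,dy$ bounded below by a positive constant on $\overline\Omega$ for all small $\sigma$ (thanks to the smoothness of $\partial\Omega$). Then the essential spectrum of $L_\sigma:=M_{\sigma,m,\Omega}+a$ is contained in the range of $a-p_\sigma/\sigma^{m}$, which for $m>0$ is pushed to $-\infty$ as $\sigma\to 0^{+}$. Combined with the elementary two-sided control $-\sup_\Omega a\le\lambda_p(L_\sigma)\le-\bar a$ (obtained from the definition of $\lambda_p$ applied to $\varphi\equiv 1$, and from the Rayleigh formula with the normalised constant) this yields a strict spectral gap for all sufficiently small $\sigma$. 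A Krein-Rutman argument on the cone of nonnegative continuous functions, together with $J(0)>0$, then produces a positive continuous eigenfunction $\varphi_{p,\sigma}$ with $\|\varphi_{p,\sigma}\|_{L^{2}(\Omega)}=1$ and the variational identity
\begin{equation*}
-\lambda_p(L_\sigma)=\max_{\|\varphi\|_{L^{2}}=1}\Big\{\int_\Omega a\varphi^{2}\,dx-\mathcal{E}_\sigma[\varphi]\Big\}.
\end{equation*}
The $m=0$ endpoint, where the essential spectrum does not collapse, requires a finer preliminary argument which I flag as a main obstacle below.

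For case (i) with $0\le m<2$, the upper bound $-\lambda_p\le\sup_\Omega a$ is immediate from the variational formula. For the matching liminf, fix $\varepsilon>0$, pick $x_{0}$ with $a(x_{0})>\sup_\Omega a-\varepsilon$, and test with an $L^{2}$-normalised smooth bump $\psi$ supported in a small neighbourhood of $x_{0}$: a Taylor expansion and the change of variables $z=(y-x)/\sigma$ give $\mathcal{E}_\sigma[\psi]\le C\sigma^{2-m}\|\nabla\psi\|_{L^{2}}^{2}\to 0$, while $\int_\Omega a\psi^{2}\ge\sup_\Omega a-2\varepsilon$; letting $\sigma\to 0^{+}$ and then $\varepsilon\to 0$ finishes the case. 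For case (ii) with $m=2$, I would prove $\Gamma$-convergence in $L^{2}(\Omega)$ of $\mathcal{E}_\sigma-\int_\Omega a(\cdot)^{2}$ to $\mathcal{F}[\varphi]:=\frac{D_{2}(J)}{2N}\int_\Omega|\nabla\varphi|^{2}-\int_\Omega a\varphi^{2}$ on $H^{1}(\Omega)$: the liminf inequality is the Bourgain-Brezis-Mironescu-Ponce formula in its global form on $\Omega$ (where the H\"older regularity of $p_\sigma$ and $a$ together with the radial symmetry of $J$ are used to control boundary contributions), while constant-in-$\sigma$ recovery sequences for smooth $\varphi$ supply the matching upper bound. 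The Ponce compactness theorem ensures that every $L^{2}$-normalised family with bounded $\mathcal{E}_\sigma$ is relatively compact in $L^{2}(\Omega)$ with limits in $H^{1}(\Omega)$; applying this to the maximisers $\varphi_{p,\sigma}$ and invoking uniqueness (up to sign) of the positive first Neumann eigenfunction $\varphi_{1}$ yields both $\lambda_p(L_\sigma)\to\lambda_{1}^{N}(\frac{D_{2}(J)}{2N}\Delta+a)$ and $\varphi_{p,\sigma}\to\varphi_{1}$ in $L^{2}(\Omega)$.

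For case (iii) with $m>2$, testing the variational formula with the constant $|\Omega|^{-1/2}$ immediately gives $-\lambda_p(L_\sigma)\ge\bar a$. For the reverse bound I would exploit the elementary identity $\mathcal{E}_\sigma[\varphi]=\sigma^{m-2}\mathcal{E}_\sigma^{(2)}[\varphi]$, where $\mathcal{E}_\sigma^{(2)}$ denotes the $m=2$ energy: since $\mathcal{E}_\sigma[\varphi_{p,\sigma}]=\int_\Omega a\varphi_{p,\sigma}^{2}+\lambda_p(L_\sigma)\le\sup_\Omega a-\bar a$ stays bounded, we obtain $\mathcal{E}_\sigma^{(2)}[\varphi_{p,\sigma}]\le C\sigma^{2-m}\to 0$. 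The Ponce compactness then forces $\varphi_{p,\sigma}\to\varphi_{*}$ in $L^{2}(\Omega)$ with $\varphi_{*}\in H^{1}(\Omega)$ and $\nabla\varphi_{*}\equiv 0$, so $\varphi_{*}$ is a constant; positivity and $\|\varphi_{p,\sigma}\|_{L^{2}}=1$ identify $\varphi_{*}=|\Omega|^{-1/2}$, and $\int_\Omega a\varphi_{p,\sigma}^{2}\to\bar a$ then squeezes $-\lambda_p(L_\sigma)\to\bar a$. The most delicate step I expect is securing the existence and spectral gap in the $m=0$ endpoint, which cannot come from the essential spectrum collapsing: here one must run a preliminary crude version of case (i) to quantify the proximity of $-\lambda_p(L_\sigma)$ to $\sup_\Omega a$ and so produce the gap $-\lambda_p(L_\sigma)>\sup_\Omega(a-p_\sigma)$ before the refined asymptotics become available.
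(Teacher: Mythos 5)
Your proposal follows the same overall skeleton as the paper --- establish the variational/Rayleigh characterisation of $\lambda_p$, obtain a spectral gap for small $\sigma$ to secure a continuous eigenfunction, and drive the $\sigma\to 0^+$ asymptotics through the Bourgain--Brezis--Mironescu/Ponce machinery --- but differs in two places worth noting.

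For the existence of $\varphi_{p,\sigma}$, the paper does not invoke essential-spectrum theory or Krein--Rutman directly; it cites Coville's criterion (Lemma~\ref{th202}): a continuous principal eigenfunction exists if and only if
$\lambda_p(M_{\sigma,m,\Omega}+a)<-\sup_{x\in\Omega}\bigl\{a(x)-\tfrac{1}{\sigma^m}p_\sigma(x)\bigr\}$.
Your gap condition ``$-\lambda_p$ lies above the top of the essential spectrum of $L_\sigma$'' is exactly this inequality, so the two formulations are equivalent; the paper's route via Coville is simply sharper bookkeeping of the same fact. The paper's Theorem~\ref{le307} then verifies the strict inequality for $m>0$ using $-\sup_\Omega a\le\lambda_p\le-\bar a$ and $p_\sigma\ge1/2$ for small $\sigma$ (so the right-hand side grows like $1/(2\sigma^m)$), and for $m=0$ cites Shen--Xie \cite{Shen-2015-DCDSB} rather than re-deriving it. You flag $m=0$ as a main obstacle, but your proposed fix actually closes it: prove $\lambda_p\to-\sup_\Omega a$ first (the bump-test-function argument needs no eigenfunction, only $\lambda_p=\lambda_v$), and combine with $p_\sigma\ge1/2$ to get $-\sup(a-p_\sigma)\ge-\sup_\Omega a+1/2>\lambda_p$ for small $\sigma$.

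In case~(ii) your route is genuinely different from the paper's. You propose $\Gamma$-convergence of $\mathcal{E}_\sigma-\int_\Omega a\varphi^2$ to the Dirichlet form with constant $K_{2,N}=1/N$ and then pass the constrained minimiser to the limit via Ponce's compactness. The paper instead proves the $\limsup$ bound by plugging fixed $H^1$ test functions into the Rayleigh quotient and using the BBM limit, and obtains the matching $\liminf$ by a delicate hands-on construction: it starts from a $C_c(\Omega)$ super-solution furnished by the $\lambda_p''$ characterisation, mollifies it, and controls the two error terms $\frac{1}{\sigma^2}\int\eta_\tau\psi_\sigma(p_\sigma(z)-p_\sigma(x))\,dz$ and $\int\eta_\tau\psi_\sigma(a(z)-a(x))\,dz$ using the H\"older semi-norms of $p_\sigma$ and $a$. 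This is where the hypotheses $p_\sigma\in C^{0,\alpha_1}$ and $a\in C^{0,\alpha_2}$ enter, not in the $\Gamma$-liminf as you suggest; in a genuine $\Gamma$-convergence proof on a bounded smooth domain those assumptions are arguably superfluous, which is one advantage of your framing. Either way, both approaches ultimately rest on the same cited BBM/Ponce results.

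Case~(iii) in your write-up is essentially identical to the paper's (multiply the eigenvalue relation by $\varphi_{p,\sigma}$, note the $m=2$ energy is $O(\sigma^{m-2})\to0$, apply Ponce compactness to get a constant limit $|\Omega|^{-1/2}$, then integrate the equation to identify $-\bar a$), except for a sign slip: the scaling identity should read $\mathcal{E}_\sigma=\sigma^{2-m}\mathcal{E}_\sigma^{(2)}$, i.e. $\mathcal{E}_\sigma^{(2)}=\sigma^{m-2}\mathcal{E}_\sigma\le C\sigma^{m-2}\to0$ for $m>2$; you wrote $\sigma^{m-2}$ and $\sigma^{2-m}$ in the two places where the other exponent belongs. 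The conclusion you reach is nevertheless the intended one.
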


\begin{remark}\label{re102}{\rm
Denote $\lambda_p(M^{D}_{\sigma,m,\Omega}+a)$ the generalised principal eigenvalue corresponding to the operator $M_{\sigma,m,\Omega}+a$ with Dirichlet boundary condition.
When $m>2$,
we conjecture that
\begin{equation*}
\lim\limits_{\sigma\to0^{+}}\lambda_p(M^{D}_{\sigma,m,\Omega}+a)=
\infty,
\end{equation*}
which is not considered by Berestycki et al. \cite{Berestycki-2016-JFA}.
}
\end{remark}

Next, we intend to understand the effects of the dispersal spread and the
dispersal budget on the persistence of the population. Before discussing
these issues, we recall an useful result \cite{Bates-2007-JMAA,Coville-2010-JDE}.
\begin{lemma}
\label{th207} Assume that $(J)$ and $(F)$ hold. Then there exists a unique positive continuous stationary solution $\theta _{\sigma }$ of \eqref{101} if and only if
$\lambda _{p}(M_{\sigma,m,\Omega }+a)<0$.
\end{lemma}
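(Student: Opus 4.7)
The plan is to prove the two implications separately; the result is classical for nonlocal KPP equations (compare \cite{Bates-2007-JMAA,Coville-2010-JDE}), so my argument only needs to check that the usual scheme adapts to the Neumann operator $M_{\sigma,m,\Omega}$ used here.

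For the necessity direction, I would suppose $\theta_\sigma>0$ is a positive continuous stationary solution and set $g(x):=f(x,\theta_\sigma(x))/\theta_\sigma(x)$. Assumption $(F)$ makes $s\mapsto f(x,s)/s$ strictly decreasing, so $g(x)<a(x)$ pointwise on $\overline{\Omega}$, where $a(x):=\lim_{s\to 0^+}f(x,s)/s$; continuity and compactness supply some $\delta>0$ with $a-g\geq\delta$. Plugging $\theta_\sigma$ itself into the definition of $\lambda_p(M_{\sigma,m,\Omega}+g)$ at $\lambda=0$ shows $\lambda_p(M_{\sigma,m,\Omega}+g)\geq 0$. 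Since the supremum defining $\lambda_p$ is manifestly non-increasing in the zero-order coefficient and shifts by a constant under constant shifts of that coefficient, I deduce $\lambda_p(M_{\sigma,m,\Omega}+a)\leq\lambda_p(M_{\sigma,m,\Omega}+g)-\delta\leq-\delta<0$.

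For the sufficiency direction, I would build a positive stationary solution by monotone iteration between an ordered super-/subsolution pair. The constant $\overline{u}\equiv M$ from $(F)$ supplies a supersolution, since $M_{\sigma,m,\Omega}[\overline{u}]=0$ and $f(\cdot,M)\leq 0$. To produce a subsolution, I would fix $\lambda'\in(\lambda_p(M_{\sigma,m,\Omega}+a),0)$ and appeal to the approximate-eigenfunction construction associated with the generalised principal eigenvalue (see \cite{Berestycki-2016-JFA,Coville-2010-JDE}) to obtain a positive $\varphi\in C(\overline{\Omega})$ satisfying $M_{\sigma,m,\Omega}[\varphi]+(a+\lambda')\varphi\geq 0$. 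Setting $\underline{u}:=\varepsilon\varphi$, one checks
\begin{equation*}
M_{\sigma,m,\Omega}[\underline{u}]+f(x,\underline{u})=\varepsilon\varphi\Bigl(\frac{M_{\sigma,m,\Omega}[\varphi]}{\varphi}+\frac{f(x,\varepsilon\varphi)}{\varepsilon\varphi}\Bigr)\geq\varepsilon\varphi\bigl(-\lambda'+o(1)\bigr)
\end{equation*}
as $\varepsilon\to 0^+$, with the $o(1)$ uniform on $\overline{\Omega}$ because $f(x,s)/s\to a(x)$ uniformly there; hence $\underline{u}$ is a subsolution for all sufficiently small $\varepsilon$, and clearly $\underline{u}\leq\overline{u}$. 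Standard monotone iteration then converges to a positive continuous stationary solution trapped between $\underline{u}$ and $\overline{u}$.

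Uniqueness would follow from the classical KPP sliding argument applied to any two positive solutions $\theta_1,\theta_2$: the ratio $\tau:=\sup\{t>0:\theta_1\geq t\theta_2\}$ lies in $(0,\infty)$, and assuming $\tau<1$ the strict monotonicity of $f(x,s)/s$ yields an inequality for $\theta_1-\tau\theta_2$ that contradicts the maximality of $\tau$; symmetry then forces $\theta_1=\theta_2$. The main obstacle in this plan is the subsolution step, since $M_{\sigma,m,\Omega}$ generally lacks a genuine principal eigenfunction in $C(\overline{\Omega})$, and the Neumann setting adds the minor complication that constants lie in the kernel of $M_{\sigma,m,\Omega}$, so the approximate-eigenfunction characterisation of $\lambda_p$ must be substituted for a bona fide spectral object.
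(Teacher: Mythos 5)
The paper does not actually prove Lemma \ref{th207}; it is recalled as a known result and attributed to \cite{Bates-2007-JMAA} and \cite{Coville-2010-JDE}, so there is no internal proof for your argument to track. Your sketch is essentially the sub/super-solution construction plus sliding-uniqueness argument used in those references, and its overall shape is correct. Two points deserve tightening. First, the approximate-eigenfunction characterisations in Lemma \ref{th206} only supply a non-negative test function, in $C_c(\Omega)$ or $C(\Omega)\cap L^{\infty}(\Omega)$, not a strictly positive $\varphi\in C(\overline{\Omega})$ as you write in the subsolution step; the fix is that at any $x$ with $\varphi(x)=0$ the subsolution inequality reduces to $M_{\sigma,m,\Omega}[\varepsilon\varphi](x)+f(x,0)=\tfrac{\varepsilon}{\sigma^{m}}\int_{\Omega}J_{\sigma}(x-y)\varphi(y)\,dy\geq 0$, which holds trivially, so your ratio computation only needs to run on $\{\varphi>0\}$. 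You then also need to argue that the monotone limit $\theta_{\sigma}$ is strictly positive on $\overline{\Omega}$, not merely non-trivial; this follows from the nonlocal strong maximum principle for $M_{\sigma,m,\Omega}$ since $J(0)>0$ and $\Omega$ is connected, so a zero of $\theta_{\sigma}$ propagates to force $\theta_{\sigma}\equiv 0$. Second, both the uniform gap $a-g\geq\delta$ in your necessity step and the sliding argument for uniqueness invoke \emph{strict} decrease of $s\mapsto f(x,s)/s$, whereas $(F)$ as written only says ``decreasing''; this is the reading intended by the cited sources, but since the statement would fail without strictness (constants would spoil uniqueness), it is worth saying explicitly.
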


Based on this conclusion and Theorems \ref{th101} and \ref{th102}, we obtain the existence and uniqueness of positive stationary solutions to equation \eqref{101} when $\sigma $ is enough small or large. Furthermore, we analyze asymptotic limits of the positive stationary solutions as $\sigma$ tends to zero or $+\infty$. As explained in \cite{Hutson-2003-JMB,shen-2019}, these asymptotic as $\sigma \ll 1$ or $\sigma \gg 1$ represent two completely different dispersal strategies. On the one case, the limit as $\sigma\rightarrow 0^{+}$ can be associated to a strategy of dispersing many offspring on a short range. On the other case, the limit as $\sigma \rightarrow +\infty $ corresponds to a strategy that disperses a few offspring over a long distance.

In the present paper, our analysis will be divided into two distinct situations: $\sigma \gg 1$ and $\sigma \ll 1$. To simplify our presentation, we restrict our discussion to nonlinearity $f(x,s)=s(a(x)-s)$ with $a\in C(\bar{\Omega})$ and $a^{+}\not\equiv 0$. However, most of the proofs apply to a more general nonlinearity
$f(x,s)$ and $a(x)=f_{s}(x,0)$. We stress that $a^{+}\not\equiv 0$ is necessary for the existence of positive solutions. Indeed, if $a^{+}\equiv 0$, then for any positive constant $C_{0}$
, we have
\begin{equation*}
M_{\sigma,m,\Omega}[C_{0}]+a(x)C_{0}\leq 0.
\end{equation*}
Therefore, $\lambda_{p}(M_{\sigma,m,\Omega}+a)\geq0$ and there is no positive stationary solutions of equation \eqref{101} for all $\sigma$.
Our first result deals with the case $\sigma \gg 1$.

\begin{theorem}
\label{th103} Assume that $J$ satisfies $(J)$, $a\in C(\bar{\Omega})$ with $a^{+}\not\equiv 0$ and $m\in[0,\infty)$. Then there exists a positive stationary solution $\theta _{\sigma }$ to \eqref{101} for all $\sigma \gg 1$.
Moreover, we have%
\begin{equation*}
\underset{\sigma \rightarrow +\infty }{\lim }||\theta _{\sigma
}-a^{+}||_{L^{\infty }(\Omega )}=0.
\end{equation*}
\end{theorem}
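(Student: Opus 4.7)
The existence of $\theta_\sigma$ for $\sigma\gg 1$ is immediate from the previous results: by Theorem~\ref{th101} the generalised principal eigenvalue satisfies $\lim_{\sigma\to+\infty}\lambda_p(M_{\sigma,m,\Omega}+a)=-\sup_\Omega a<0$ (using $a^+\not\equiv 0$), so $\lambda_p(M_{\sigma,m,\Omega}+a)<0$ for $\sigma$ large, and Lemma~\ref{th207} supplies the unique positive continuous stationary solution.

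My plan for the $L^\infty$-convergence is to sandwich $\theta_\sigma$ between explicit sub- and super-solutions constructed from $a^+$, with the sandwich collapsing at a quantitative rate $\delta_\sigma\to 0^+$. The whole construction rests on the elementary decay bound $J_\sigma(z)\le\|J\|_\infty\sigma^{-N}$, which makes
\begin{equation*}
\|M_{\sigma,m,\Omega}[v]\|_{L^\infty(\Omega)}\le \frac{2\|v\|_{L^\infty(\Omega)}\|J\|_\infty|\Omega|}{\sigma^{m+N}}
\end{equation*}
hold for every bounded continuous $v$, uniformly and for every $m\in[0,\infty)$. I would take the super-solution $\bar\theta_\sigma:=a^++\delta_\sigma$; using $a^+\cdot a^-\equiv 0$, a short computation gives
\begin{equation*}
M_{\sigma,m,\Omega}[\bar\theta_\sigma]+\bar\theta_\sigma(a-\bar\theta_\sigma)=M_{\sigma,m,\Omega}[a^+]-\delta_\sigma(|a|+\delta_\sigma),
\end{equation*}
so the supersolution inequality reduces, with the worst point being the zero set $\{a=0\}$, to $\delta_\sigma^2\ge C_1\sigma^{-(m+N)}$. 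For the subsolution $\underline\theta_\sigma:=(a-\delta_\sigma)^+$, the inequality is trivial on $\{a\le\delta_\sigma\}$ since $\underline\theta_\sigma\equiv 0$ there and $M_{\sigma,m,\Omega}[\underline\theta_\sigma]\ge 0$ by nonnegativity, while on $\{a>\delta_\sigma\}$ the sharper one-sided pointwise bound
\begin{equation*}
M_{\sigma,m,\Omega}[\underline\theta_\sigma](x)\ge -\frac{\underline\theta_\sigma(x)\|J\|_\infty|\Omega|}{\sigma^{m+N}},
\end{equation*}
which uses $\underline\theta_\sigma\ge 0$ rather than $\|\underline\theta_\sigma\|_\infty$, reduces the condition to the much weaker $\delta_\sigma\ge C_2\sigma^{-(m+N)}$.

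Choosing $\delta_\sigma$ of order $\sigma^{-(m+N)/2}$ validates both conditions for $\sigma\gg 1$. Applying the monotone-iteration / comparison principle for nonlocal KPP equations (Bates--Zhao~\cite{Bates-2007-JMAA}, Coville~\cite{Coville-2010-JDE}) together with the uniqueness in Lemma~\ref{th207} then forces $\underline\theta_\sigma\le\theta_\sigma\le\bar\theta_\sigma$, and since $\|a^+-\underline\theta_\sigma\|_\infty=\|\bar\theta_\sigma-a^+\|_\infty=\delta_\sigma$ we conclude $\|\theta_\sigma-a^+\|_{L^\infty(\Omega)}=O(\sigma^{-(m+N)/2})\to 0$. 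The main obstacle in this plan is precisely the supersolution bound at the zero set of $a$: there the reaction factor $|a|+\delta_\sigma$ collapses to $\delta_\sigma$, so the supersolution condition becomes quadratic in $\delta_\sigma$ and is what ultimately pins down the rate; the subsolution side is comparatively forgiving thanks to the sharper one-sided lower bound available for the nonlocal operator acting on nonnegative functions.
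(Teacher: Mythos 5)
Your proof is correct and follows essentially the same route as the paper: existence from Theorem~\ref{th101} plus Lemma~\ref{th207}, then a sandwich of $\theta_\sigma$ between sub- and super-solutions built by perturbing $a^+$, exploiting $a^+a^-\equiv 0$ and the kernel decay $\|M_{\sigma,m,\Omega}[v]\|_\infty = O(\sigma^{-(m+N)})$. The only real difference is organizational: the paper proves the two sides of the sandwich in separate lemmas (Lemma~\ref{le401}(ii) with the spatially varying sub-solution $(a-\tfrac{1}{\sigma^m}\int_\Omega J_\sigma\,dz)^+$, Lemma~\ref{le402} with the super-solution $a^++\sigma^{-N/4}$), whereas you use a single tunable $\delta_\sigma\sim\sigma^{-(m+N)/2}$; as a small bonus your balancing of the quadratic constraint at $\{a=0\}$ against the linear one where $a>0$ yields the explicit and in fact sharper rate $O(\sigma^{-(m+N)/2})$ compared to the paper's $O(\sigma^{-N/4})$.
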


\begin{remark}\label{re103}{\rm
For $m=0$, Berestycki et al. \cite{Berestycki-2016-JMB} showed that positive stationary solutions of the following equation
\begin{equation}  \label{102}
u_{t}(x,t)=\frac{1}{\sigma^{m}}\bigg(\int_{\Omega}J_{\sigma}(x-y)u(y,t)
dy-u(x,t)\bigg)+f(x,u(x,t)),\ \  (x,t)\in \Omega\times(0,\infty),
\end{equation}
may not exist for all $\sigma\gg 1$. However, positive stationary solutions of equation \eqref{101} exists for $\sigma\gg 1$. This seems to show that the nonlocal diffusion equation \eqref{101} is a conservative ecological system.}
\end{remark}

From the ecological point of view, the large spread strategy can be selected for species to persist as the case $m\geq 0$. That may be because organisms is concentrated on large dispersal range, but they ignores quantity of diffusion, which amounts to the small dispersal rate. However, the small spread strategy may not be an optimal strategy, in the sense that a population adopting such strategy can go extinct. When $m\geq 2$, it may happen that no positive solution exists for small $\sigma $. Here is our  precise result.

\begin{theorem}
\label{th104} Assume that $J$ satisfies $(J)$ and $a\in C(\bar{\Omega})$ and $a^{+}\not\equiv 0$. Then we obtain the following results.
\begin{itemize}
\item[(i)] For $0\leq m<2$, there exists a positive stationary solution $\theta _{\sigma }$ to \eqref{101} for all $0<\sigma \ll 1$. Assuming further that $a\in C^{2}(\overline{\Omega })$ and $J$ is radially symmetric, we have
\begin{equation*}
\underset{\sigma \rightarrow 0^{+} }{\lim }\ \|\theta _{\sigma }-V_{1}\|_{L^{1}(\Omega)}=0,
\end{equation*}
where $V_{1}$ is a nonnegative bounded solution of the following equation
\begin{equation*}
V_{1}(a(x)-V_{1})=0\text{ \ in }~\Omega \ ;
\end{equation*}
\item[(ii)] For $m=2$, suppose that the assumptions in Theorem \ref{th102} (ii) hold and $\lambda^{N}_{1}\Big(\frac{D_{2}(J)}{2N}\Delta +a\Big)<0$. Then
    there exists a positive stationary solution $\theta _{\sigma }$ to \eqref{101} for all $0<\sigma \ll 1$. Moreover, we obtain
\begin{equation*}
\underset{\sigma \rightarrow 0^{+} }{\lim }\ \|\theta _{\sigma }-V_{2}\|_{L^{2}(\Omega)}=0,
\end{equation*}
where $V_{2}$ is the unique bounded non-trivial solution of
\begin{align*}
\begin{cases}
\frac{D_{2}(J)}{2N}\Delta V_{2}+V_{2}(a(x)-V_{2}) =0\text{ \ \ \ in }\Omega \text{,} \\
\frac{\partial V_{2}}{\partial \nu} =0\text{ \ \ \ \ \ \ \ \ \ \ \ \ \ \ \ \ \ \ \
\ \ \ \ \ \ \ \ \ \ on }\partial \Omega;
\end{cases}
\end{align*}
\item[(iii)] For $m>2$, if $J$ is radially symmetric and $\bar{a}>0$, then there exists a positive stationary solution $\theta _{\sigma }$ to \eqref{101} for all $0<\sigma \ll 1$. Moreover, we get
\begin{equation*}
\underset{\sigma \rightarrow 0^{+} }{\lim }\ \|\theta _{\sigma }-\bar{a}\|_{L^{2}(\Omega)}=0.
\end{equation*}
\end{itemize}
\end{theorem}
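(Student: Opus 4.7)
The plan is to reduce existence to the sign of the generalised principal eigenvalue via Lemma \ref{th207} and Theorem \ref{th102}, then analyse the convergence of $\theta_\sigma$ as $\sigma\to 0^+$ separately in each regime by exploiting the scaling of the nonlocal operator. For existence in each of the three cases: in (i), Theorem \ref{th102}(i) gives $\lim_{\sigma\to 0^+}\lambda_p(M_{\sigma,m,\Omega}+a)=-\sup_\Omega a<0$ since $a^+\not\equiv 0$; in (ii), the hypothesis $\lambda^N_1\bigl(\frac{D_2(J)}{2N}\Delta+a\bigr)<0$ combined with Theorem \ref{th102}(ii) yields $\lambda_p<0$ for small $\sigma$; in (iii), $\bar a>0$ combined with Theorem \ref{th102}(iii) gives $\lambda_p<0$ for small $\sigma$. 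Lemma \ref{th207} then produces the unique positive continuous stationary solution $\theta_\sigma$.

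Next I would establish a uniform $L^\infty$ bound $0<\theta_\sigma\le M:=\|a^+\|_{L^\infty(\Omega)}$ by noting that $M$ is a constant supersolution (the nonlocal operator annihilates constants and the KPP structure gives $f(x,M)\le 0$), and the maximum-principle-type argument of Coville \cite{Coville-2010-JDE} transfers this bound to $\theta_\sigma$. Testing the stationary equation against $\theta_\sigma$ and using the symmetry of $J_\sigma$ gives the fundamental energy identity
\begin{equation*}
\frac{1}{2\sigma^m}\int_\Omega\!\!\int_\Omega J_\sigma(x-y)\bigl(\theta_\sigma(y)-\theta_\sigma(x)\bigr)^2 dy\, dx \;=\; \int_\Omega \theta_\sigma^2\,(a-\theta_\sigma)\, dx,
\end{equation*}
whose right-hand side is uniformly bounded in $\sigma$. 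This identity is the hinge for all three limits.

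For Case (i) with $0\le m<2$, the energy identity forces the nonlocal seminorm to vanish like $\sigma^m$, so by a standard mollifier argument combined with the $L^\infty$ bound, any $L^1$-subsequential limit $V$ satisfies the pointwise algebraic relation $V(a(x)-V)=0$ a.e.; the KPP monotonicity and the strict positivity of $\theta_\sigma$ on $\{a>0\}$ (obtained from the principal eigenfunction bound in Theorem \ref{th102}(i)) rule out the trivial root there, giving $V=a^+$ a.e. and convergence of the full family by uniqueness. For Case (ii), $m=2$ is the critical scaling, and the Bourgain--Brezis--Mironescu/Ponce compactness of \cite{Bourgain-2001,Brezis-2001,Ponce-2004-JEMS} provides, up to subsequence, a limit $V_2\in H^1(\Omega)$; passing to the weak limit in the equation tested against $C^2$ functions transforms the nonlocal operator into $\frac{D_2(J)}{2N}\Delta$ together with the Neumann boundary term (which emerges precisely because integration is restricted to $\Omega$), so $V_2$ solves the stated local Neumann problem and is identified by its uniqueness. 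For Case (iii) with $m>2$, the energy identity yields $\int\int J_\sigma(\theta_\sigma(y)-\theta_\sigma(x))^2\lesssim \sigma^m$, which divided by the natural $\sigma^2$ scale of the seminorm blows up unless the limit is constant; thus $\theta_\sigma\to c$ in $L^2(\Omega)$ along subsequences. Integrating the stationary equation over $\Omega$ annihilates the nonlocal term by Fubini and symmetry, leaving $\int_\Omega \theta_\sigma(a-\theta_\sigma)\,dx=0$, which in the limit gives $c(\bar a-c)=0$; the lower bound $c>0$ follows by testing the equation against the near-constant principal eigenfunction of Theorem \ref{th102}(iii), forcing $c=\bar a$.

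The main obstacle is Case (ii): identifying the Neumann boundary condition in the limit. Since the nonlocal Neumann condition is built into the operator implicitly through truncation of integration to $\Omega$ with no explicit boundary term, the passage to the limit must carefully justify that for a test function $\varphi\in C^2(\overline\Omega)$ the double integral $\frac{1}{\sigma^2}\int_\Omega\int_\Omega J_\sigma(x-y)(\theta_\sigma(y)-\theta_\sigma(x))\varphi(x)\,dy\,dx$ converges to $\frac{D_2(J)}{2N}\int_\Omega V_2\,\Delta\varphi\,dx$ without any boundary correction, which is where the Hölder regularity of $p_\sigma$ and $a$ assumed in Theorem \ref{th102}(ii) enters. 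A secondary difficulty is Case (iii), where ruling out the trivial limit $c=0$ and upgrading subsequential convergence to convergence of the full family requires both the spectral hypothesis $\bar a>0$ and the uniqueness statement of Lemma \ref{th207}.
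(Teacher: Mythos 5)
Your overall framework (existence via Lemma \ref{th207} plus the spectral limits of Theorem \ref{th102}, and the energy identity as the workhorse) matches the paper, and your treatments of parts (ii) and (iii) are essentially aligned with the paper's approach. However, part (i) contains a genuine gap, and part (iii) has one step that is described in a way that would not work.

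In part (i), for $0\le m<2$ the energy identity gives $\frac{1}{\sigma^m}\int_\Omega\int_\Omega J_\sigma(x-y)(\theta_\sigma(y)-\theta_\sigma(x))^2\,dy\,dx\le C$, but the Bourgain--Brezis--Mironescu/Ponce machinery requires control of $\frac{1}{\sigma^2}\int\int J_\sigma(\cdot)^2$, which here is only $O(\sigma^{m-2})\to\infty$. So there is \emph{no} compactness coming from the energy bound, and your phrase ``any $L^1$-subsequential limit'' presupposes a strong limit that nothing in the argument produces. Without strong convergence the nonlinear term $\theta_\sigma^2$ does not pass to the weak limit, and the ``standard mollifier argument'' cannot identify the limiting algebraic equation. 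The paper bypasses this exactly: it introduces $w_\sigma=a-\theta_\sigma$, multiplies the equation for $w_\sigma$ by $w_\sigma^+$, and — crucially — puts the Taylor expansion on $a\in C^2(\bar\Omega)$ rather than on the (non-smooth) $\theta_\sigma$, obtaining $\bigl|M_{\sigma,m,\Omega}[a]\bigr|=O(\sigma^{2-m})$ and hence $\int_\Omega\theta_\sigma(w_\sigma^+)^2\,dx\le C\sigma^{2-m}\to 0$. Combined with $\int_\Omega\theta_\sigma(a-\theta_\sigma)\,dx=0$, this yields a.e.\ convergence, and then the uniform $L^\infty$ bound and dominated convergence give the claimed $L^1$ convergence. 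Separately, your assertion that $V=a^+$ and that it follows from ``strict positivity of $\theta_\sigma$ on $\{a>0\}$ obtained from the principal eigenfunction bound in Theorem \ref{th102}(i)'' is not supported: Theorem \ref{th102}(i) gives only the eigenvalue limit, not any eigenfunction lower bound; and the theorem as stated only asserts $V_1(a-V_1)=0$, not $V_1=a^+$.

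In part (iii), if by ``testing the equation against the near-constant principal eigenfunction'' you mean the duality pairing $\int(\cdot)\varphi_{p,\sigma}\,dx$, then using self-adjointness and the eigenvalue equation only reproduces $-\lambda_p\int\theta_\sigma\varphi_{p,\sigma}=\int\theta_\sigma^2\varphi_{p,\sigma}$, which in the limit gives $c(\bar a-c)=0$ — still compatible with $c=0$. To rule out the trivial limit, the paper instead builds a \emph{sub-solution}: it shows $\frac{\alpha\bar a}{4}\varphi_{p,\sigma}$ is a sub-solution of \eqref{401} for small $\sigma$, which forces $\theta_\sigma\ge\frac{\alpha\bar a}{4}\varphi_{p,\sigma}$, and then uses $\varphi_{p,\sigma}\to|\Omega|^{-1/2}$ to get a uniform positive lower bound surviving the limit. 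You should replace the testing step with this comparison argument.
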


\begin{remark}\label{re104}{\rm
(i) For $m=2$, if $\lambda^{N}_{1}\Big(\frac{D_{2}(J)}{2N}\Delta +a\Big)>0$, then it is easy to verify that there
exists no positive stationary solutions of equation \eqref{101}
for all $0<\sigma\ll 1$. However, it is an open problem for the critical case $\lambda^{N}_{1}\Big(\frac{D_{2}(J)}{2N}\Delta +a\Big)=0$. (ii) For $m>2$, if the conjecture of Remark \ref{re102}
is right, then there exists no positive stationary solutions of
equation (\ref{102}) for all $0<\sigma\ll 1$. However, when $\bar{a}>0$, a positive stationary solution of
equation \eqref{101} exists for all $0<\sigma\ll 1$; when $\bar{a}<0$,
there exists no positive stationary solutions of
equation \eqref{101} for all $0<\sigma\ll 1$; when $\bar{a}=0$, it
is an open problem. This shows that Dirichlet and Neumann boundary conditions have a great difference. Our result also shows that the spatial homogeneity can be caused by small dispersal spread in the spatial heterogeneous environment.}
\end{remark}

These results clearly highlight the dependence of the spreading strategy on the cost parameter $m$ and the distribution of the intrinsic
growth rate $a(x)$. Especially, we emphasize that the boundary condition may play an important role in the persistence of populations. For example, the large spread strategy with Neumann boundary condition can be selected for species to persist while one with Dirichlet boundary condition \cite{Berestycki-2016-JMB} may not be this case for $m=0$.

Finally, let us consider solution $u_{\sigma}(x,t)$ of equation \eqref{101} with initial value $u_0(x)$. We point out that the existence and uniqueness of solution $u_{\sigma}(x,t)$ has been studied by \cite{Bates-2007-JMAA,
Rossi-2007-JDE}, see also Proposition \ref{pr401} in the present paper. Specifically, we analyze the asymptotic behaviors of solution $u_{\sigma}(x,t)$ with respect to large dispersal spread and small dispersal spread.
Now, we obtain the following result for $m\in [0,\infty)$.
\begin{theorem}
\label{th106} Assume that $J$ satisfies $(J)$, $a\in C(\bar{\Omega})$ and $u_{0}\in C(\bar{\Omega})$ with $u_{0}\geq 0$. Then, for every $T\in (0,\infty)$, there exist $\sigma_{1}>0$ and $C(T)>0$ such that
\begin{equation*}
\sup\limits_{t\in[0,T]}\|u_{\sigma}(\cdot,t)-v(\cdot,t)\|_{L^{\infty}
(\Omega)}\leq C(T)\sigma^{-(m+N)},
\end{equation*}
for all $\sigma\geq \sigma_{1}$, where $v(x,t)$ satisfies the following
equation
\begin{equation}\label{106}
\begin{cases}
v_{t}(x,t)=v(a(x)-v), \ \ &(x,t)\in \bar{\Omega}\times(0,\infty),\\
v(x,0)=u_{0}(x),  \ \ &x\in \bar{\Omega}.
\end{cases}
\end{equation}
\end{theorem}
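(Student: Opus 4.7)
The plan is to treat the difference $w_\sigma(x,t) := u_\sigma(x,t) - v(x,t)$ as the solution, at each fixed $x$, of a linear scalar ODE in $t$ whose forcing is the nonlocal diffusion $M_{\sigma,m,\Omega}[u_\sigma]$. Since $J_\sigma(x-y)=\sigma^{-N}J((x-y)/\sigma)$ is pointwise of order $\sigma^{-N}$ and is multiplied by $\sigma^{-m}$, this forcing is of size $\sigma^{-(m+N)}$ once $u_\sigma$ is known to be uniformly bounded, and a variation-of-constants computation will then deliver the advertised rate.

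The first step is a uniform-in-$\sigma$ $L^{\infty}$ bound. Since $M_{\sigma,m,\Omega}$ annihilates constants, the constant $M := \max\{\|u_0\|_{L^{\infty}(\Omega)},\,\sup_{\Omega} a\}$ is a supersolution of \eqref{101} (because $f(x,M)=M(a(x)-M)\le 0$) and $0$ is a subsolution, so the comparison principle from Proposition \ref{pr401} gives $0\le u_\sigma(x,t)\le M$ on $\bar\Omega\times[0,\infty)$, uniformly in $\sigma$; the same bound for $v$ follows pointwise from \eqref{106}. Using the boundedness of $J$, the finiteness of $|\Omega|$, and this bound, one obtains
\begin{equation*}
\|M_{\sigma,m,\Omega}[u_\sigma(\cdot,t)]\|_{L^{\infty}(\Omega)} \le \frac{2M}{\sigma^{m+N}}\,\|J\|_{L^{\infty}(\mathbb{R}^N)}\,|\Omega| =: \frac{C_1}{\sigma^{m+N}},\qquad t\ge 0,
\end{equation*}
with $C_1$ independent of $\sigma$.

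Subtracting \eqref{106} from \eqref{101} and factoring $u(a-u)-v(a-v)=(u-v)(a-u-v)$, the function $w_\sigma$ satisfies, for each $x\in\bar\Omega$,
\begin{equation*}
\partial_t w_\sigma(x,t) = g_\sigma(x,t)\,w_\sigma(x,t) + h_\sigma(x,t),\qquad w_\sigma(x,0)=0,
\end{equation*}
where $g_\sigma := a-u_\sigma-v$ is uniformly bounded by some constant $K>0$ independently of $\sigma$, and $h_\sigma := M_{\sigma,m,\Omega}[u_\sigma]$ is controlled by $C_1\sigma^{-(m+N)}$. The variation-of-constants formula at fixed $x$ then gives
\begin{equation*}
|w_\sigma(x,t)|\le \int_0^t e^{K(t-s)}\,|h_\sigma(x,s)|\,ds\le \frac{e^{KT}-1}{K}\cdot\frac{C_1}{\sigma^{m+N}},\qquad t\in[0,T],
\end{equation*}
which is the inequality claimed, with $C(T):=(e^{KT}-1)C_1/K$, valid for all $\sigma\ge\sigma_1$ with $\sigma_1$ chosen so that the above estimates are in force.

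The proof is essentially a routine Gronwall/Duhamel comparison once the two key inputs (the uniform $L^{\infty}$ bound and the pointwise smallness of the nonlocal term) are in place, so the only delicate point is the uniformity of the $L^{\infty}$ bound in Step one. This delicacy is resolved by the crucial structural observation that $M_{\sigma,m,\Omega}$ annihilates constants, which makes the constant supersolution $M$ work simultaneously for every $\sigma$. Consequently no regularity of $J$ or $a$ beyond what is already assumed in $(J)$ and in the hypotheses of the theorem enters the argument.
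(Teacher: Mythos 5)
Your proposal is correct and reaches the stated bound, but it uses a cleaner and somewhat more elementary mechanism than the paper. Both you and the paper set $w_\sigma = u_\sigma - v$, establish the same uniform $L^\infty$ bound via the constant super/sub-solutions $M$ and $0$, and estimate the nonlocal term as $O(\sigma^{-(m+N)})$ using $\|J\|_\infty|\Omega|$ and the $L^\infty$ bound. The difference is in how the resulting inhomogeneous linear problem for $w_\sigma$ is closed. The paper writes $(w_\sigma)_t = M_{\sigma,m,\Omega}[w_\sigma] + a_\sigma w_\sigma + F_\sigma$ with forcing $F_\sigma = M_{\sigma,m,\Omega}[v]$, keeps the nonlocal operator acting on $w_\sigma$, and then builds explicit exponential super/sub-solutions $\pm e^{3Kt}K_1\sigma^{-(m+N)}t$, invoking the nonlocal comparison principle a second time. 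You instead fold the entire nonlocal term into the forcing, $h_\sigma := M_{\sigma,m,\Omega}[u_\sigma]$, observe that at each fixed $x$ the function $t\mapsto w_\sigma(x,t)$ solves the scalar linear ODE $\partial_t w_\sigma = g_\sigma w_\sigma + h_\sigma$ with $g_\sigma = a - u_\sigma - v$ bounded uniformly by $K$ (note, as a side remark, that the paper's displayed $a_\sigma = a + u_\sigma + v$ carries a sign typo; only $|a_\sigma|\le 3K$ is used, so it is harmless), and conclude by Duhamel/Gronwall. This avoids the second invocation of the nonlocal comparison principle, so the only comparison argument you need is the elementary one against the constant supersolution, and your bound in fact holds for all $\sigma>0$ rather than just $\sigma\ge\sigma_1$. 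The only thing you rely on that is specific to the Neumann setting is, as you correctly emphasise, that $M_{\sigma,m,\Omega}$ annihilates constants; the paper's proof relies on this same fact when verifying that the $x$-independent function $\bar w$ is a supersolution. In short: same decomposition, same two key estimates, different closing step, with yours being the more direct.
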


For small dispersal spread, we discuss asymptotic behavior of solution $u_{\sigma}(x,t)$ and give the precise convergence rate.
\begin{theorem}
\label{th107} Let $0\leq m<2$. Assume that $J$ satisfies $(J)$ and is radially symmetric, $a\in C^{2}(\bar{\Omega})$ and $u_{0}\in C^{2}(\bar{\Omega})$ with $u_{0}\geq 0$. Then, for every $T\in (0,\infty)$, there exist $\sigma_{0}>0$ and $C(T)>0$ such that
\begin{equation*}
\sup\limits_{t\in[0,T]}\|u_{\sigma}(\cdot,t)-v(\cdot,t)\|_{L^{\infty}
(\Omega)}\leq C(T)\sigma^{2-m},
\end{equation*}
for all $0<\sigma\leq \sigma_{0}$. Here, $v(x,t)$ is the solution of \eqref{106}.
\end{theorem}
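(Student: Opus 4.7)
The plan is to use an upper and lower solution argument together with the comparison principle for the nonlocal parabolic Neumann problem (Proposition~\ref{pr401}). The idea is that, since $m<2$, the operator $M_{\sigma,m,\Omega}$ applied to a fixed $C^{2}$-function decays as $\sigma^{2-m}$, so the purely reactive limit $v$ is an approximate solution of \eqref{101} and the error $u_{\sigma}-v$ can be sandwiched by $\pm C(T)\sigma^{2-m}e^{\gamma t}$. Before doing anything else I would verify the required regularity of $v$: since $a,u_{0}\in C^{2}(\bar{\Omega})$ and $v$ solves the parameter-dependent ODE $v_{t}=v(a(x)-v)$ with datum $u_{0}(x)$, differentiating twice in $x$ and invoking Gronwall yields $v\in C^{1}([0,T];C^{2}(\bar{\Omega}))$ with norms controlled by $T$, $\|a\|_{C^{2}}$, $\|u_{0}\|_{C^{2}}$.

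The key technical step is the pointwise estimate
\[
\sup_{t\in[0,T]}\bigl\|M_{\sigma,m,\Omega}[v(\cdot,t)]\bigr\|_{L^{\infty}(\Omega)}\ \le\ K(T)\,\sigma^{2-m}.
\]
After the rescaling $z=(y-x)/\sigma$ and a second-order Taylor expansion of $v$ in the spatial variable,
\[
M_{\sigma,m,\Omega}[v](x,t)=\sigma^{1-m}\nabla v(x,t)\cdot\!\int_{\Omega_{x,\sigma}}\!J(z)\,z\,dz+\tfrac{\sigma^{2-m}}{2}\!\int_{\Omega_{x,\sigma}}\!J(z)\,z^{\top}\nabla^{2}v(x,t)z\,dz+O(\sigma^{3-m}),
\]
where $\Omega_{x,\sigma}:=\{z\in B_{1}(0):x+\sigma z\in\Omega\}$. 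Whenever $d(x,\partial\Omega)\geq\sigma$ the set $\Omega_{x,\sigma}$ equals $B_{1}(0)$, the first-order term vanishes by radial symmetry of $J$, and the second-order term is uniformly $O(\sigma^{2-m})$ with constant depending on $\|\nabla^{2}v\|_{L^{\infty}}$ and the second moment $D_{2}(J)$. In the boundary strip $\{d(x,\partial\Omega)<\sigma\}$ a finer analysis is required; this is the main obstacle, discussed below.

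With the estimate in place, set
\[
\bar u_{\sigma}(x,t):=v(x,t)+A\sigma^{2-m}e^{\gamma t},\qquad\underline u_{\sigma}(x,t):=v(x,t)-A\sigma^{2-m}e^{\gamma t}.
\]
Since $M_{\sigma,m,\Omega}$ annihilates functions constant in the spatial variable and $v_{t}-v(a(x)-v)\equiv 0$, a direct computation gives
\[
(\bar u_{\sigma})_{t}-M_{\sigma,m,\Omega}[\bar u_{\sigma}]-\bar u_{\sigma}(a-\bar u_{\sigma})=A\sigma^{2-m}e^{\gamma t}\bigl[\gamma-(a-2v)+A\sigma^{2-m}e^{\gamma t}\bigr]-M_{\sigma,m,\Omega}[v],
\]
which is nonnegative once $\gamma>\|2v-a\|_{L^{\infty}([0,T]\times\bar{\Omega})}$ and $A=A(T)$ is chosen large enough relative to $K(T)$. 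An analogous calculation makes $\underline u_{\sigma}$ a subsolution, and at $t=0$ one has $\bar u_{\sigma}(\cdot,0)\ge u_{0}\ge \underline u_{\sigma}(\cdot,0)$. The comparison principle then forces $\underline u_{\sigma}\le u_{\sigma}\le \bar u_{\sigma}$ on $\bar{\Omega}\times[0,T]$, which is exactly the announced bound with $C(T)=A(T)e^{\gamma T}$.

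The hard part is the boundary-strip estimate in the second step: the pointwise first-moment bound only gives $O(\sigma^{1-m})$ there, which is insufficient once $m\ge 1$. To close this gap I would combine three ingredients: (i) a local flattening of $\partial\Omega$ that makes $\Omega_{x,\sigma}$ a half-ball up to an $O(\sigma)$ curvature correction; (ii) the identity $\int_{B_{1}(0)}J(z)z\,dz=0$ from radial symmetry, which rewrites the offending first-order integral as a half-ball moment controlled by the rescaled signed distance $d(x,\partial\Omega)/\sigma$; and (iii) incorporation of a thin boundary corrector $B\sigma^{2-m}\chi(d(x,\partial\Omega)/\sigma)$ in the super-/sub-solutions whose nonlocal action cancels the residual boundary flux of $M_{\sigma,m,\Omega}[v]$. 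This cancellation uses the radial symmetry of $J$, the $C^{2}$-smoothness of $\partial\Omega$, and the $C^{2}$-regularity of $v$ in an essential way, and it is the step where the hypotheses of the theorem are genuinely exploited.
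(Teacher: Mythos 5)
Your argument---decompose $w_{\sigma}=u_{\sigma}-v$, estimate the residual $F_{\sigma}=M_{\sigma,m,\Omega}[v]$ by a second-order Taylor expansion using the radial symmetry of $J$, and then sandwich $w_{\sigma}$ between $\pm A\sigma^{2-m}e^{\gamma t}$ via the comparison principle---is exactly the strategy the paper uses, which takes $\bar{w}=e^{3Kt}K_{1}\sigma^{2-m}t$ and $\underline{w}=-\bar w$. The one place you go further is the boundary strip $\{d(x,\partial\Omega)<\sigma\}$: you correctly observe that there $(\Omega-x)/\sigma$ does \emph{not} contain $B_{1}(0)$, so the first-moment cancellation $\int J(z)z\,dz=0$ is unavailable and the pointwise Taylor bound on $F_\sigma$ degrades to $O(\sigma^{1-m})$. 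The paper in fact overlooks this: it asserts that ``for $\sigma$ small enough, $B_{1}(0)\subset(\Omega-x)/\sigma$'' and then integrates over all of $\mathbb{R}^{N}$, but this inclusion holds only when $d(x,\partial\Omega)\ge\sigma$, so the uniform estimate $\|F_{\sigma}(\cdot,t)\|_{L^{\infty}(\Omega)}=O(\sigma^{2-m})$ is not justified as written. Your diagnosis of this gap is correct.

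That said, the boundary corrector you sketch does not self-evidently repair the rate. Applying $M_{\sigma,m,\Omega}$ to a function of amplitude $\sigma^{\alpha}$ that varies on the scale $\sigma$ near $\partial\Omega$ produces a quantity of order $\sigma^{\alpha-m}$, so to absorb the $O(\sigma^{1-m})$ residual one is driven to $\alpha=1$; but then the corrector itself has size $\sigma$, which for $m<1$ is \emph{larger} than the target $\sigma^{2-m}$. Any fix recovering the stated rate uniformly on $\bar\Omega$ would have to exploit that the bad set has measure $O(\sigma)$ in combination with the strong $O(\sigma^{-m})$ averaging of the nonlocal operator, and neither you nor the paper carries this out. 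In short: your proposal follows the paper's route, honestly flags a gap that the paper also has, but does not close it.
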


\begin{remark}\label{re105}{\rm
(i) For $m=2$, some results have been given by \cite{Andreu-2010-AMS,Shen-2015-JDE}. (ii) For $m>2$, we conjecture that $u_{\sigma}(x,t)$ can approximate
to the solution of the following equation
\begin{equation*}
\begin{cases}
v_{t}(t)=v(\bar{a}-v), \ \ &(x,t)\in \bar{\Omega}\times(0,\infty),\\
v(x,0)=u_{0}(x),  \ \ &x\in \bar{\Omega}.
\end{cases}
\end{equation*}
}
\end{remark}

In short, the present paper deals with both large and small dispersal spread strategies. For large dispersal spread strategy, we show that their asymptotic are unitary with respect to the cost parameter $m$. However, small dispersal spread strategy can lead to different asymptotic behaviors as the cost parameter $m$ is in a different range. For instance, stationary solutions of the nonlocal diffusion KPP equation can approximate to stationary solutions of the corresponding kinetic equation, the corresponding random diffusion KPP equation or the corresponding spatial homogeneous kinetic equation. The intermediate dispersal spread strategy, which may be investigated by a bifurcation approach in our future work, is also of great biological interest.

The rest of the paper is organised as follows. In Section 2, we first establish the continuous dependence of $\lambda _{p}(M_{\sigma ,m,\Omega}+a)$ with respect to the parameter $\sigma $. Then we study the asymptotic limits of $\lambda _{p}(M_{\sigma ,m,\Omega}+a)$ and the corresponding principal eigenfunction $\varphi_{p,\sigma}$ (i.e. Theorems \ref{th101} and \ref{th102}). Section 3 is devoted to investigating the effects of the dispersal spread and the dispersal budget on persistence criteria (i.e. Theorems \ref{th103} and \ref{th104}). The last section concerns
the asymptotic behaviors of solution $u_{\sigma}(x,t)$ to equation \eqref{101} for large dispersal spread and small dispersal spread (i.e. Theorems \ref{th106} and \ref{th107}).


\section{Asymptotic limits of principal eigenpair}

\noindent

In this section, we investigate the following spectral problem
\begin{equation}\label{301}
M_{\sigma ,m,\Omega }[\varphi ]+(a(x)+\lambda )\varphi =0\text{ \ \ \ in } \Omega,
\end{equation}
where $\Omega\subset\mathbb{R}^N$ is a bounded smooth domain. As noticed in \cite{Coville-2010-JDE,Donsker-1975-PNAS,Kao-2010-DCDS,Shen-2010-JDE}, the operator $M_{\sigma ,m,\Omega }+a$ may not have any eigenvalues in the space $L^{p}(\Omega )$ or $C(\overline{\Omega })$. However, it is enough to discuss the dynamic behavior of \eqref{101} by the definition of the generalised principal eignevalue $\lambda _{p}(M_{\sigma ,m,\Omega}+a) $. Here, we establish the continuous dependence and asymptotic properties of $\lambda _{p}(M_{\sigma ,m,\Omega}+a)$ with respect to the parameter $\sigma $. In addition, we discuss the existence and asymptotic behaviors of a positive continuous eigenfunction $\varphi _{p,\sigma }$ associated to the principal eigenvalue $\lambda_{p}(M_{\sigma ,m,\Omega }+a)$ as $\sigma \ll 1$.

\subsection{Large dispersal spread}

\noindent

This subsection is dedicated to proving Theorem \ref{th101}. Before going to the study of these limits, we
obtain the continuous dependence of $\lambda _{p}(M_{\sigma ,m,\Omega }+a) $
with respect to $\sigma $.

\begin{theorem}
\label{le301} Assume that $J$ satisfies $(J)$ and $a\in C(\bar{\Omega})$. Then $%
\lambda _{p}(M_{\sigma ,m,\Omega }+a)$ is continuous with respect to $\sigma
$.
\end{theorem}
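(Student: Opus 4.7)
The plan is to fix $\sigma_0 > 0$ and prove lower and upper semicontinuity of $\sigma \mapsto \lambda_p(M_{\sigma,m,\Omega}+a)$ at $\sigma_0$ separately. The common technical ingredient is a continuity lemma: for every fixed $\varphi \in C(\bar{\Omega})$,
\[
\|M_{\sigma,m,\Omega}[\varphi] - M_{\sigma_0,m,\Omega}[\varphi]\|_{L^{\infty}(\Omega)} \longrightarrow 0 \quad \text{as } \sigma \to \sigma_0.
\]
After the change of variables $z = (x-y)/\sigma$ the operator becomes $\sigma^{-m}\int J(z)(\varphi(x-\sigma z) - \varphi(x))\,dz$ on a $\sigma$-dependent subset of the compact support of $J$, so uniform continuity of $\varphi$ on $\bar{\Omega}$ combined with dominated convergence yields the claimed uniform limit.

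For lower semicontinuity I run the test function method. Given $\lambda < \lambda_p(M_{\sigma_0,m,\Omega}+a)$, the defining supremum supplies $\lambda' \in (\lambda, \lambda_p(M_{\sigma_0,m,\Omega}+a))$ and a continuous $\varphi > 0$ on $\bar{\Omega}$ satisfying $M_{\sigma_0,m,\Omega}[\varphi] + (a + \lambda')\varphi \leq 0$. Since $m_{\varphi} := \min_{\bar{\Omega}}\varphi > 0$, there is strict slack
\[
M_{\sigma_0,m,\Omega}[\varphi] + (a + \lambda)\varphi \leq -(\lambda' - \lambda)\, m_{\varphi} < 0.
\]
The continuity lemma then allows the same $\varphi$ to serve as a valid positive test function for every $\sigma$ sufficiently close to $\sigma_0$, giving $\lambda \leq \lambda_p(M_{\sigma,m,\Omega}+a)$ for such $\sigma$. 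Letting $\lambda \uparrow \lambda_p(M_{\sigma_0,m,\Omega}+a)$ yields $\liminf_{\sigma\to\sigma_0}\lambda_p(M_{\sigma,m,\Omega}+a) \geq \lambda_p(M_{\sigma_0,m,\Omega}+a)$.

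For upper semicontinuity I pass to a variational formula, since a dual test-function argument would demand extraction of a limit from a family $\{\varphi_\sigma\}$ for which the nonlocal operator supplies no compactness. Symmetry of $J$ makes $M_{\sigma,m,\Omega}$ self-adjoint on $L^{2}(\Omega)$ with the nonlocal Dirichlet form
\[
\langle -M_{\sigma,m,\Omega}\varphi,\varphi\rangle_{L^{2}} = \frac{1}{2\sigma^{m}}\int_{\Omega}\!\int_{\Omega} J_{\sigma}(x-y)(\varphi(x)-\varphi(y))^{2}\,dy\,dx.
\]
Applied to the positivity-improving compact perturbation $M_{\sigma,m,\Omega}+a+K$ with $K$ large, a Perron--Frobenius argument in the spirit of Coville and Berestycki et al.\ identifies $\lambda_p(M_{\sigma,m,\Omega}+a)$ with the bottom of the $L^{2}$-spectrum,
\[
\lambda_p(M_{\sigma,m,\Omega}+a) = \inf_{\varphi \in L^{2}(\Omega)\setminus\{0\}} \frac{\langle -(M_{\sigma,m,\Omega}+a)\varphi, \varphi\rangle_{L^{2}}}{\|\varphi\|_{L^{2}(\Omega)}^{2}}.
\]
For each fixed $\varphi \in L^{2}(\Omega)$ dominated convergence makes $\sigma \mapsto \langle -(M_{\sigma,m,\Omega}+a)\varphi,\varphi\rangle_{L^{2}}$ continuous, so $\lambda_p(M_{\sigma,m,\Omega}+a)$ is a pointwise infimum of a family of continuous functions of $\sigma$, hence upper semicontinuous. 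The identification of $\lambda_p$ with the bottom of the $L^{2}$-spectrum is the main obstacle and is the essential use of the Neumann structure; once it is in hand, combining the two semicontinuity results completes the proof.
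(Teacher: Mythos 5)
Your proof is correct, but it takes a genuinely different route from the paper's. The paper does not argue via semicontinuity: it first replaces $a$ by a nearby coefficient $a_{\epsilon}$, constructed so that $b_{\epsilon}=a_{\epsilon}-\sigma^{-m}\int_{\Omega}J_{\sigma}(\cdot-y)\,dy$ attains its supremum on a small ball and satisfies $1/(\sup_{\Omega}b_{\epsilon}-b_{\epsilon})\notin L^{1}(\Omega)$; by Coville's criterion this forces $\lambda_{p}(M_{\sigma,m,\Omega}+a_{\epsilon})$ to be a simple \emph{isolated} eigenvalue, so Kato's perturbation theory applies once one notes (as you also do, in your continuity lemma) that $M_{\sigma,m,\Omega}\to M_{\sigma_{0},m,\Omega}$ in operator norm. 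The Lipschitz bound $|\lambda_{p}(M_{\sigma,m,\Omega}+a)-\lambda_{p}(M_{\sigma,m,\Omega}+a_{\epsilon})|\le\|a-a_{\epsilon}\|_{L^{\infty}(\Omega)}$ then closes a three-epsilon estimate. You instead obtain lower semicontinuity directly from the definition, exploiting that a positive test function in $C(\bar{\Omega})$ has a strictly positive minimum on the compact set $\bar{\Omega}$, so the strict slack survives the $o(1)$ operator perturbation; and upper semicontinuity from the identity $\lambda_{p}=\lambda_{v}$ (the paper's Lemma 2.9), which exhibits $\lambda_{p}$ as an infimum of Rayleigh quotients, each continuous in $\sigma$ by dominated convergence. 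Your route avoids both Kato's theory and the delicate construction of $b_{\epsilon}$, at the cost of invoking $\lambda_{p}=\lambda_{v}$ -- but the paper records and uses exactly that identity anyway. Two minor remarks: the lower-semicontinuity step does require $\min_{\bar{\Omega}}\varphi>0$, which is legitimate under the paper's definition but would need care under a convention allowing test functions to vanish on $\partial\Omega$; and the identity $\lambda_{p}=\lambda_{v}$ is not specific to the Neumann structure (it holds equally in the Dirichlet setting of Berestycki--Coville--Vo), so that characterization, though harmless, is inaccurate.
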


\begin{proof}
Let us first denote some notations
\begin{equation*}
L_{\sigma ,m,\Omega }[\varphi](x):=\frac{1}{\sigma ^{m}}\int_{\Omega
}J_{\sigma }(x-y)\varphi(y)dy
\end{equation*}
and
\begin{equation*}
b(x):=a(x)-\frac{1}{\sigma ^{m}}\int_{\Omega
}J_{\sigma }(x-y)dy \ , \ \nu =\underset{\Omega }{\sup }\ b.
\end{equation*}
Observe that with these notations, we have $\lambda _{p}(M_{\sigma ,m,\Omega
}+a)=\lambda _{p}(L_{\sigma ,m,\Omega }+b).$

By the definition of $\nu $, there exists a sequence of points $%
\{x_{k}\}_{k\in
\mathbb{N}
}$ such that $x_{k}\in \Omega $ and $|b(x_{k})-\nu |<1/k$. From continuity
of $b$, for each $k$, there exists $\eta _{k}>0$ such that%
\begin{equation*}
B_{\eta _{k}}(x_{k})\subset \Omega \text{ \ and \ }\underset{B_{\eta
_{k}}(x_{k})}{\sup }|b-\nu |\leq 2/k.
\end{equation*}

Next, let us take a sequence of $\{\epsilon _{k}\}_{k\in
\mathbb{N}
}$ which converges to zero such that $0<\epsilon _{k}\leq \eta _{k}/2$. Let $%
\chi _{k}$ be the following sequence of cut off functions:%
\begin{equation*}
\chi _{k}(x)=\chi \Big(\frac{|x-x_{k}|}{\epsilon _{k}}\Big),
\end{equation*}%
where $\chi $ is a smooth function such that $0\leq \chi \leq 1,\chi (z)=0$
for $|z|\geq 2$ and $\chi (z)=1$ for $|z|\leq 1$.

Finally, let us consider the continuous function $b_{k}(\cdot )$, defined by
$b_{k}(x):=\sup \{b(x),(\nu -\inf_{\Omega}b)\chi _{k}(x)+\inf_{\Omega}b\}.$ So we have%
\begin{equation*}
b_{k}(x)=
\begin{cases}
b(x)\text{ \ \ \ \ \ \ for }x\in \Omega \backslash B_{2\epsilon _{k}}(x_{k})%
\text{,} \\
\nu \text{ \ \ \ \ \ \ \ \ \ \ for }x\in \Omega \cap B_{\epsilon _{k}}(x_{k})%
\end{cases}%
\end{equation*}%
and%
\begin{equation*}
||b-b_{k}||_{L^{\infty }(\Omega )}\leq \sup_{B_{\eta _{k}}(x_{k})}
|b -\nu|\rightarrow 0\text{ \ \ \ as }k\rightarrow \infty \text{.}
\end{equation*}

By construction, for every given constant $\epsilon >0$, there exists $%
k_{0}\in
\mathbb{N}
$ such that for all $k\geq k_{0}$, we get%
\begin{equation*}
||b-b_{k}||_{L^{\infty }(\Omega )}<\epsilon/3
\end{equation*}%
and%
\begin{equation*}
\underset{\Omega }{\sup }\ b_{k}(\cdot )=\nu \ , \ \frac{1}{\nu -b_{k}}\notin
L^{1}(\Omega ).
\end{equation*}%
Let us take $b_{\epsilon }(\cdot )=b_{k_{0}}(\cdot )$.

Thanks to \cite[Theorem 1.1]{Coville-2010-JDE}, $\lambda _{p}(L_{\sigma,m,\Omega }+b_{\epsilon })$ is a
simple isolated principal eigenvalue. Let%
\begin{equation*}
a_{\epsilon }(x)=b_{\epsilon }(x)+\frac{1}{\sigma ^{m}}\int_{\Omega
}J_{\sigma }(x-y)dy,
\end{equation*}%
then%
\begin{equation*}
||a-a_{\epsilon }||_{L^{\infty }(\Omega )}<\epsilon/3
\end{equation*}%
and%
\begin{equation*}
\lambda _{p}(M_{\sigma ,m,\Omega }+a_{\epsilon })=\lambda _{p}(L_{\sigma
,m,\Omega }+b_{\epsilon }).
\end{equation*}%
It follows from the classical perturbation theory of isolated eigenvalues \cite{Kato-1995}. In
fact, for each fixed $\sigma _{0}>0$, we can write $M_{\sigma ,m,\Omega
}+a_{\epsilon }$ as%
\begin{equation*}
M_{\sigma ,m,\Omega }+a_{\epsilon }=M_{\sigma _{0},m,\Omega }+a_{\epsilon
}+U_{\sigma ,\sigma _{0}}\text{,}
\end{equation*}%
where%
\begin{equation*}
U_{\sigma ,\sigma _{0}}[\varphi ](x)=\frac{1}{\sigma ^{m}}\int_{\Omega
}J_{\sigma }(x-y)(\varphi (y)-\varphi (x))dy-\frac{1}{\sigma _{0}^{m}}%
\int_{\Omega }J_{\sigma _{0}}(x-y)(\varphi (y)-\varphi (x))dy\text{.}
\end{equation*}%
Because $U_{\sigma ,\sigma _{0}}$ is a linear bounded operator and $U_{\sigma
,\sigma _{0}}\rightarrow 0$ \ in norm as $\sigma \rightarrow \sigma _{0}$,
there exists $\delta _{0}>0$ such that for all $|\sigma -\sigma _{0}|<\delta
_{0}$, we have%
\begin{equation*}
\big|\lambda _{p}(M_{\sigma ,m,\Omega }+a_{\epsilon })-\lambda _{p}(M_{\sigma
_{0},m,\Omega }+a_{\epsilon })\big|<\epsilon/3.
\end{equation*}

By \cite[Proposition 1.1($iii$)]{Coville-2010-JDE} , $\lambda _{p}(M_{\sigma ,m,\Omega }+a)$
is Lipschitz continuous with respect to $a$, i.e.,%
\begin{equation*}
\big|\lambda _{p}(M_{\sigma ,m,\Omega }+a_{\epsilon })-\lambda _{p}(M_{\sigma
,m,\Omega }+a)\big|\leq ||a-a_{\epsilon }||_{L^{\infty }(\Omega )}<
\epsilon/3.
\end{equation*}

In a word, for every given constant $\epsilon >0$, there exist $\delta
_{0}>0 $ and $a_{\epsilon }\in C(\overline{\Omega })$ such that for all $%
|\sigma -\sigma _{0}|<\delta _{0}$, we have%
\begin{align*}
&|\lambda _{p}(M_{\sigma ,m,\Omega }+a)-\lambda _{p}(M_{\sigma _{0},m,\Omega
}+a)| \\
\leq &|\lambda _{p}(M_{\sigma ,m,\Omega }+a)-\lambda _{p}(M_{\sigma
,m,\Omega }+a_{\epsilon })|+|\lambda _{p}(M_{\sigma ,m,\Omega }+a_{\epsilon
})-\lambda _{p}(M_{\sigma _{0},m,\Omega }+a_{\epsilon })| \\
&+|\lambda _{p}(M_{\sigma _{0},m,\Omega }+a_{\epsilon })-\lambda
_{p}(M_{\sigma _{0},m,\Omega }+a)| \\
<&\frac{\epsilon }{3}+\frac{\epsilon }{3}+\frac{\epsilon }{3}=\epsilon.
\end{align*}%
So $\lambda _{p}(M_{\sigma ,m,\Omega }+a)$ is continuous with respect to $%
\sigma $.
\end{proof}

Next, we consider the limit behavior of $\lambda _{p}(M_{\sigma ,m,\Omega}+a)$ as $\sigma\rightarrow +\infty$.
\begin{theorem}
\label{le304} Assume that $J$ satisfies $(J)$
and $a\in C(\bar{\Omega})$. Then
\begin{equation*}
\underset{\sigma \rightarrow +\infty }{\lim }\lambda _{p}(M_{\sigma ,m,\Omega
}+a)=-\underset{\Omega }{\sup }\ a.
\end{equation*}
\end{theorem}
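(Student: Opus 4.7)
The strategy is to sandwich $\lambda_{p}(M_{\sigma,m,\Omega}+a)$ between two explicit quantities that both converge to $-\sup_{\Omega}a$ as $\sigma\to+\infty$, with a quantitative rate of order $\sigma^{-(m+N)}$. The key feature of the Neumann operator that drives everything is the identity $M_{\sigma,m,\Omega}[1]\equiv 0$, which is not available for the Dirichlet operator $M^{D}_{\sigma,m,\Omega}$ in Remark \ref{re101} and which is ultimately responsible for the difference in the two limits.

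For the lower bound, I would insert the trial function $\varphi\equiv 1$ into the definition of $\lambda_{p}$. Since $M_{\sigma,m,\Omega}[1]\equiv 0$, any $\lambda\leq -\sup_{\Omega}a$ satisfies $M_{\sigma,m,\Omega}[1]+(a+\lambda)\cdot 1\leq 0$ in $\Omega$, so
\begin{equation*}
\lambda_{p}(M_{\sigma,m,\Omega}+a)\geq -\sup_{\Omega}a\quad\text{for every }\sigma>0.
\end{equation*}

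For the matching upper bound, I would first rewrite the definition in the equivalent Rayleigh-type form
\begin{equation*}
\lambda_{p}(M_{\sigma,m,\Omega}+a)=\sup_{\varphi\in C(\overline{\Omega}),\,\varphi>0}\,\inf_{x\in\Omega}\Bigl[-\frac{M_{\sigma,m,\Omega}[\varphi](x)}{\varphi(x)}-a(x)\Bigr],
\end{equation*}
which follows by isolating $\lambda$ in $M_{\sigma,m,\Omega}[\varphi]+(a+\lambda)\varphi\leq 0$ and dividing by $\varphi>0$. For any positive $\varphi$, dropping the nonnegative contribution of $\varphi(y)$ inside the integral yields the pointwise bound
\begin{equation*}
\frac{M_{\sigma,m,\Omega}[\varphi](x)}{\varphi(x)}\geq -\frac{1}{\sigma^{m}}\int_{\Omega}J_{\sigma}(x-y)\,dy\geq -\frac{|\Omega|\,\|J\|_{\infty}}{\sigma^{m+N}},
\end{equation*}
where the last step uses $\|J_{\sigma}\|_{\infty}\leq \|J\|_{\infty}/\sigma^{N}$. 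Combining this with $\inf_{x\in\Omega}(-a(x))=-\sup_{\Omega}a$ (using continuity of $a$ on $\overline{\Omega}$), I obtain
\begin{equation*}
\lambda_{p}(M_{\sigma,m,\Omega}+a)\leq -\sup_{\Omega}a+\frac{|\Omega|\,\|J\|_{\infty}}{\sigma^{m+N}},
\end{equation*}
and sending $\sigma\to+\infty$ closes the sandwich.

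I do not anticipate a genuine obstacle: once the Neumann identity $M_{\sigma,m,\Omega}[1]=0$ is exploited for the lower bound and the elementary $\sigma^{-N}$ decay of $J_{\sigma}$ controls the integral for the upper bound, the argument is essentially arithmetic. The only point worth checking is that the inequality $M_{\sigma,m,\Omega}[\varphi]+(a+\lambda)\varphi\leq 0$, formally stated in $\Omega$, may be evaluated on $\overline{\Omega}$ by continuity of all terms, so the relevant infima/suprema over $\Omega$ and $\overline{\Omega}$ agree. This also makes transparent the contrast with the Dirichlet limit $1-\sup_{\Omega}a$ of \cite{Berestycki-2016-JFA}: in that setting the analogue of $M^{D}[1]$ carries an unavoidable $-\varphi/\sigma^{m}$ term whose limit produces the extra $+1$, while in the Neumann setting this term is absent.
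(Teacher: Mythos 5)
Your proof is correct and follows the same sandwich strategy as the paper: the lower bound via the test pair $(\varphi,\lambda)=(1,-\sup_{\Omega}a)$, exploiting $M_{\sigma,m,\Omega}[1]\equiv 0$, is exactly the paper's, and your upper bound is the same estimate $\lambda_{p}\leq -\sup_{x\in\Omega}\bigl\{a(x)-\frac{1}{\sigma^{m}}\int_{\Omega}J_{\sigma}(x-y)\,dy\bigr\}$ re-derived directly from the $\sup$-$\inf$ characterisation by discarding the nonnegative term $\int_{\Omega}J_{\sigma}(x-y)\varphi(y)\,dy$. The one genuine (if modest) improvement is that you bound $\int_{\Omega}J_{\sigma}(x-y)\,dy\leq |\Omega|\,\|J\|_{\infty}/\sigma^{N}$ uniformly in $x$; the paper instead argues $\int_{(\Omega-x)/\sigma}J(z)\,dz\to 0$ pointwise in $x$ and then tacitly passes to the supremum over $x$ that actually appears in its upper bound, a step your uniform estimate renders automatic while also producing the explicit rate $O(\sigma^{-(m+N)})$. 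Your remark about the contrast with the Dirichlet case correctly identifies the mechanism behind the extra $+1$ in the limit of $\lambda_{p}(M^{D}_{\sigma,m,\Omega}+a)$.
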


\begin{proof}
By the definition of $\lambda _{p}(M_{\sigma ,m,\Omega }+a)$, we get
\begin{equation*}
\lambda _{p}(M_{\sigma ,m,\Omega }+a)\leq -%
\underset{x\in \Omega }{\sup }\bigg\{a(x)-\frac{1}{\sigma ^{m}}\int_{\Omega
}J_{\sigma }(x-y)dy\bigg\}.
\end{equation*}
On the other hand, by using the test function $(\varphi,\lambda)=(1,-\sup_{\Omega}a)$, we can easily check
that for any $\sigma>0$
\begin{equation*}
-\underset{\Omega }{\sup }\ a\leq \lambda _{p}(M_{\sigma ,m,\Omega }+a).
\end{equation*}
Thus, we have
\begin{equation*}
-\underset{\Omega }{\sup }\ a\leq \lambda _{p}(M_{\sigma ,m,\Omega }+a)\leq -%
\underset{x\in \Omega }{\sup }\bigg\{a(x)-\frac{1}{\sigma ^{m}}\int_{\Omega
}J_{\sigma }(x-y)dy\bigg\}.
\end{equation*}
Since the following inequality%
\begin{equation*}
-\underset{x\in \Omega }{\sup }\bigg\{a(x)-\frac{1}{\sigma ^{m}}\int_{\Omega
}J_{\sigma }(x-y)dy\bigg\}\leq -\underset{\Omega }{\sup }\ a+\frac{1}{\sigma
^{m}}\underset{x\in \Omega }{\sup }\bigg\{\int_{\Omega }J_{\sigma }(x-y)dy%
\bigg\},
\end{equation*}%
we have%
\begin{equation*}
-\underset{\Omega }{\sup }\ a\leq \lambda _{p}(M_{\sigma ,m,\Omega }+a)\leq -%
\underset{\Omega }{\sup }\ a+\frac{1}{\sigma ^{m}}\underset{x\in \Omega }{\sup
}\bigg\{\int_{\Omega }J_{\sigma }(x-y)dy\bigg\}.
\end{equation*}%
As $\sigma \rightarrow +\infty ,$%
\begin{align*}
\int_{\Omega }J_{\sigma }(x-y)dy &=\frac{1}{\sigma ^{N}}\int_{\Omega }J\Big(%
\frac{x-y}{\sigma }\Big)dy \\
&=\frac{1}{\sigma ^{N}}\int_{\Omega }J(-z)d(x+\sigma z) \\
&=\int_{\frac{\Omega -x}{\sigma }}J(z)dz\rightarrow 0\text{, \ \ }\forall
x\in \Omega ,
\end{align*}%
which implies that%
\begin{equation*}
\underset{\sigma \rightarrow +\infty }{\lim }\lambda _{p}(M_{\sigma ,m,\Omega
}+a)=-\underset{\Omega }{\sup }\ a.
\end{equation*}
\end{proof}

\subsection{Small dispersal spread}

\subsubsection{Existence of principal eigenpair}

\noindent

Firstly, let us introduce three definitions.

\begin{definition}
\label{def205} We define the following quantities:%
\begin{align*}
\lambda _{p}^{\prime }(M_{\sigma ,m,\Omega }+a):=\inf \big\{& \lambda \in
\mathbb{R}\ |\ \exists \varphi \geq 0,\varphi \in C(\Omega )\cap L^{\infty
}(\Omega )\text{, such that} \\
\text{ }& \ \ \ \ \ \ \ \ \ \ \ \ \ \ \ \ M_{\sigma ,m,\Omega }[\varphi ]+(a(x)+\lambda )\varphi \geq 0\text{
\ in }\Omega \big\},\\
\lambda _{p}^{\prime \prime }(M_{\sigma ,m,\Omega }+a):=\inf \big\{& \lambda
\in \mathbb{R}\ |\ \exists \varphi \geq 0,\varphi \in C_{c}(\Omega )\text{, such
that} \\
& \ \ \ \ \ \ \ \ \ \ \ \ \ \ \ \  M_{\sigma ,m,\Omega }[\varphi ]+(a(x)+\lambda )\varphi \geq 0\text{ \ in }%
\Omega \big\},
\end{align*}%
and
\begin{align*}
\lambda _{v}(M_{\sigma ,m,\Omega }+a): =&\underset{\varphi \in L^{2}(\Omega
),\varphi \not\equiv 0}{\inf }-\frac{\langle M_{\sigma ,m,\Omega }[\varphi
]+a\varphi ,\varphi \rangle }{\langle \varphi ,\varphi \rangle } \\
=&\underset{\varphi \in L^{2}(\Omega ),\varphi \not\equiv 0}{\inf }\frac{%
\frac{1}{2\sigma ^{m}}\int_{\Omega }\int_{\Omega }J_{\sigma }(x-y)(\varphi
(y)-\varphi (x))^{2}dydx-\int_{\Omega }a(x)\varphi ^{2}(x)dx}{||\varphi
||_{L^{2}(\Omega )}^{2}}
\end{align*}%
where $\langle \cdot ,\cdot \rangle $ denotes the standard scalar product in $%
L^{2}(\Omega )$.
\end{definition}

By the same discussion as the proof in \cite{Berestycki-2016-JFA}, we have the following result.

\begin{lemma}
\label{th206} Assume that $J$ satisfies $(J)$ and $a\in C(\bar{\Omega})$. Then
we have%
\begin{equation*}
\lambda _{p}(M_{\sigma ,m,\Omega }+a)=\lambda _{p}^{\prime }(M_{\sigma
,m,\Omega }+a)=\lambda _{p}^{\prime \prime }(M_{\sigma ,m,\Omega
}+a)=\lambda _{v}(M_{\sigma ,m,\Omega }+a).
\end{equation*}
\end{lemma}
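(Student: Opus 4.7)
The plan is to prove the four quantities coincide via the chain
\begin{equation*}
\lambda_p\le\lambda_p'\le\lambda_p''\ge\lambda_v,
\end{equation*}
closed by the additional inequality $\lambda_p\ge\lambda_v$ (equivalently $\lambda_p''\le\lambda_p$). The strategy is a direct transcription to the Neumann operator $M_{\sigma,m,\Omega}$ of the proof of \cite[Proposition 1.1]{Berestycki-2016-JFA}, using only the positivity and symmetry of $J$ together with the boundedness of $a$; throughout I suppress the argument $M_{\sigma,m,\Omega}+a$ from the notation.

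Two of the inequalities are essentially free. Because $C_c(\Omega)\subset C(\Omega)\cap L^\infty(\Omega)$, every admissible competitor in the definition of $\lambda_p''$ is also admissible for $\lambda_p'$, giving $\lambda_p'\le\lambda_p''$. Likewise, testing any admissible $\varphi\in C_c(\Omega)$ with $\varphi\ge 0$, $\varphi\not\equiv 0$, and $M_{\sigma,m,\Omega}[\varphi]+(a+\lambda)\varphi\ge 0$ against itself in $L^2(\Omega)$ yields
\begin{equation*}
\lambda\ge-\frac{\langle M_{\sigma,m,\Omega}[\varphi],\varphi\rangle+\int_\Omega a(x)\varphi^2(x)\,dx}{\|\varphi\|_{L^2(\Omega)}^2}\ge\lambda_v,
\end{equation*}
and taking the infimum over admissible $\lambda$ gives $\lambda_p''\ge\lambda_v$.

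The two nontrivial inequalities are $\lambda_p\le\lambda_p'$ and $\lambda_p\ge\lambda_v$. For the first, fix $\mu<\lambda_p$ and $\nu>\lambda_p'$ with corresponding test functions $\varphi\in C(\overline\Omega)$, $\varphi>0$, satisfying $M_{\sigma,m,\Omega}[\varphi]+(a+\mu)\varphi\le 0$, and $\psi\in C(\Omega)\cap L^\infty(\Omega)$, $\psi\ge 0$, with $M_{\sigma,m,\Omega}[\psi]+(a+\nu)\psi\ge 0$. Since $\varphi$ has a strictly positive minimum on the compact $\overline\Omega$, $\kappa:=\sup_\Omega\psi/\varphi<\infty$. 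At any contact point $x_0\in\overline\Omega$ where $\psi(x_0)=\kappa\varphi(x_0)$, the pointwise bound $\psi\le\kappa\varphi$ on $\Omega$ and the linearity of the integral operator $M_{\sigma,m,\Omega}$ force $M_{\sigma,m,\Omega}[\psi](x_0)\le\kappa M_{\sigma,m,\Omega}[\varphi](x_0)$; substituting into the two defining inequalities and dividing by $\kappa\varphi(x_0)>0$ yields $\mu\le\nu$, and letting $\mu\uparrow\lambda_p$, $\nu\downarrow\lambda_p'$ delivers $\lambda_p\le\lambda_p'$. For the second, I exploit the symmetry of $J$: the operator $-M_{\sigma,m,\Omega}-a$ is bounded and self-adjoint on $L^2(\Omega)$, so $\lambda_v$ is the bottom of its spectrum. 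A minimising sequence in the Rayleigh quotient can be replaced by its absolute value (the Dirichlet form is nonincreasing under $\varphi\mapsto|\varphi|$ since $J\ge 0$), and a Krein--Rutman/compactness step using that the integral part of $M_{\sigma,m,\Omega}$ is a compact perturbation of a multiplication operator produces a nonnegative approximate eigenfunction that, after a regularisation exploiting $J(0)>0$, becomes a legitimate test function in the supremum defining $\lambda_p$, yielding $\lambda_p\ge\lambda_v$.

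The main obstacle is the possible nonattainment of $\kappa=\sup_\Omega\psi/\varphi$: because $\psi$ is only in $L^\infty(\Omega)$, it may approach its supremum without actually reaching it, so a contact point $x_0$ need not exist. Following \cite{Berestycki-2016-JFA}, one works instead with a maximising sequence $x_n$ together with a small perturbation $\psi-\eta$ (which pushes the near-contact locus into a compactly contained subset of $\Omega$), applies the comparison argument to the perturbed pair, and then sends $\eta\to 0^+$. A parallel issue arises in the $\lambda_v$-step, since $\lambda_v$ need not be a genuine $L^2$-eigenvalue; the approximate eigenfunctions must be handled with explicit error control when inserted into the definitions of $\lambda_p$ and $\lambda_p'$.
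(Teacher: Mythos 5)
The paper itself gives no argument here beyond the one-line citation ``by the same discussion as the proof in \cite{Berestycki-2016-JFA},'' so your instinct to reconstruct the Berestycki--Coville--Vo proof is the right one. However, your logical skeleton does not close, and this is a real gap rather than a presentation issue.

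You assert the four inequalities $\lambda_p\le\lambda_p'$, $\lambda_p'\le\lambda_p''$, $\lambda_p''\ge\lambda_v$ and $\lambda_p\ge\lambda_v$. Taken together these only yield the one-sided chain
\begin{equation*}
\lambda_v\le\lambda_p\le\lambda_p'\le\lambda_p'',
\end{equation*}
from which no equality follows; you are missing a reverse inequality putting $\lambda_p''$ back below $\lambda_v$ (or below $\lambda_p$, together with $\lambda_p\le\lambda_v$). The parenthetical claim that ``$\lambda_p\ge\lambda_v$ (equivalently $\lambda_p''\le\lambda_p$)'' is false: given the other three inequalities, $\lambda_p\ge\lambda_v$ does not imply $\lambda_p''\le\lambda_p$, and this mislabelling is precisely what hides the hole. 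In particular, the inequality $\lambda_p\le\lambda_v$ (which you never state) is the one obtained by the Picone-type trick of multiplying the subsolution inequality $M_{\sigma,m,\Omega}[\varphi]+(a+\lambda)\varphi\le0$ by $\psi^2/\varphi$ for $\psi\in C_c(\Omega)$, symmetrising in $(x,y)$, and using the elementary pointwise estimate $(\varphi(y)-\varphi(x))\bigl(\tfrac{\psi^2(x)}{\varphi(x)}-\tfrac{\psi^2(y)}{\varphi(y)}\bigr)\ge-(\psi(x)-\psi(y))^2$; this is one of the genuinely substantive steps in the Berestycki--Coville--Vo argument and is absent from your plan. You would still then need the remaining hard inequality $\lambda_p''\le\lambda_p$ (construct, for $\lambda>\lambda_p$, a compactly supported nonnegative $\varphi$ with $M_{\sigma,m,\Omega}[\varphi]+(a+\lambda)\varphi\ge0$), and your Krein--Rutman sketch does not supply this: it only produces a \emph{positive} eigenfunction when the bottom of the spectrum of $-M_{\sigma,m,\Omega}-a$ is an isolated eigenvalue, i.e.\ strictly below the essential spectrum, which exactly fails in the regime where $\lambda_p$ is not attained as a classical principal eigenvalue. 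You flag this difficulty at the end of your proposal, but the ``explicit error control'' needed is the heart of the matter and is not sketched. So the touching argument for $\lambda_p\le\lambda_p'$ and the two easy inequalities are fine, but the closing of the chain, which is where the real work lies, is missing.
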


Now, we recall an useful criterion \cite{Coville-2013-AML}
that guarantees the existence of a continuous principal eigenfunction.

\begin{lemma}
\label{th202}
Assume that $J$ satisfies $(J)$ and $a\in C(\bar{\Omega})$. Then there exists a positive continuous eigenfunction associated to $\lambda
_{p}(M_{\sigma ,m,\Omega }
+a)$ if and only if $\lambda
_{p}(M_{\sigma ,m,\Omega }+a)<-\sup_{x\in \Omega } \Big\{a(x)-\frac{1}{%
\sigma ^{m}}\int_{\Omega }J_{\sigma }(x-y)dy\Big\}$.
\end{lemma}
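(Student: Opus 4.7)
My plan is to split into the two implications, working with the notation $L_{\sigma,m,\Omega}[\varphi](x)=\frac{1}{\sigma^m}\int_\Omega J_\sigma(x-y)\varphi(y)dy$ and $b(x)=a(x)-\frac{1}{\sigma^m}\int_\Omega J_\sigma(x-y)dy$ introduced in the proof of Theorem \ref{le301}, so that the eigenvalue equation becomes $L_{\sigma,m,\Omega}[\varphi]+(b+\lambda_p)\varphi=0$ and the criterion reads $-\lambda_p>\nu:=\sup_{\bar\Omega}b$.

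For the forward direction I would argue pointwise. Suppose $\varphi\in C(\bar\Omega)$ is a positive eigenfunction. By continuity the equation extends to $\bar\Omega$. Pick $x_0\in\bar\Omega$ with $b(x_0)=\nu$; evaluating at $x_0$ gives $L_{\sigma,m,\Omega}[\varphi](x_0)=(-\lambda_p-\nu)\varphi(x_0)$. The left side is strictly positive since $\varphi>0$ on $\bar\Omega$, $J(0)>0$, and $\Omega$ meets every ball around $x_0$ in positive measure; together with $\varphi(x_0)>0$ this forces $-\lambda_p>\nu$.

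For the reverse direction I would approximate $b$ by continuous potentials $b_k\to b$ uniformly, constructed as in the proof of Theorem \ref{le301} so that $b_k$ attains its supremum $\nu_k$ on a ball and $1/(\nu_k-b_k)\notin L^1(\Omega)$. The criterion of Coville \cite{Coville-2010-JDE} used there gives, for each $k$, a simple isolated principal eigenvalue $\lambda_{p,k}$ with a positive continuous eigenfunction $\varphi_k$, normalised by $\|\varphi_k\|_{L^\infty(\Omega)}=1$. The Lipschitz continuity of the generalised principal eigenvalue in the potential (Coville, Proposition 1.1(iii)) yields $\lambda_{p,k}\to\lambda_p$ and $\nu_k\to\nu$, so the strict hypothesis $\lambda_p<-\nu$ produces a uniform gap $-\lambda_{p,k}-b_k(x)\ge c>0$ on $\bar\Omega$ for all large $k$. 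Rewriting the eigenvalue equation as
\begin{equation*}
\varphi_k(x)=\frac{L_{\sigma,m,\Omega}[\varphi_k](x)}{-\lambda_{p,k}-b_k(x)},
\end{equation*}
and using the uniform continuity of $J_\sigma$ (since $J\in C$ has compact support) together with this uniform lower bound on the denominator, Arzel\`a--Ascoli extracts a subsequence converging uniformly on $\bar\Omega$ to some $\varphi\in C(\bar\Omega)$ with $\|\varphi\|_\infty=1$, which solves $L_{\sigma,m,\Omega}[\varphi]+(b+\lambda_p)\varphi=0$. Strict positivity of the limit then follows by a connectedness argument: at any maximum point $\varphi$ is positive, and the representation formula combined with the positivity of $J$ near the origin propagates $\varphi>0$ throughout $\bar\Omega$.

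The delicate point will be the sufficiency step, where I need both compactness of $\{\varphi_k\}$ in $C(\bar\Omega)$ and non-degeneracy of the limit despite the absence of any smoothing. Both rely crucially on the uniform denominator bound $-\lambda_{p,k}-b_k\ge c>0$, which is precisely what the strict inequality $\lambda_p<-\nu$ buys us; without it, $\varphi_k$ could concentrate where $b_k$ is near $\nu_k$ and the limit could fail to be continuous or even nontrivial.
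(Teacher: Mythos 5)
The paper does not prove Lemma \ref{th202}; it quotes it verbatim from Coville \cite{Coville-2013-AML} as ``an useful criterion,'' so there is no internal proof to compare against. Your self-contained argument is, however, correct and fills that gap soundly.

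Both halves hold up. The necessity direction is the standard pointwise argument: evaluating $L_{\sigma,m,\Omega}[\varphi](x_0)=(-\lambda_p-\nu)\varphi(x_0)$ at a maximiser $x_0\in\bar\Omega$ of $b$, noting the left side is strictly positive because $J(0)>0$, $J$ is continuous, and $\Omega$ has positive measure in every ball about $x_0$, forces $-\lambda_p>\nu$; the case $\varphi(x_0)=0$ is ruled out by the same strict positivity. For sufficiency, your strategy of borrowing the regularised potentials $b_k$ from the proof of Theorem \ref{le301} is apt: by construction $\sup b_k=\nu$ exactly and $1/(\nu-b_k)\notin L^1(\Omega)$, so Coville's earlier criterion (used there as \cite[Theorem 1.1]{Coville-2010-JDE}) hands you genuine principal eigenpairs $(\lambda_{p,k},\varphi_k)$, while the Lipschitz dependence of $\lambda_p$ on the zeroth-order term gives $\lambda_{p,k}\to\lambda_p$ and hence the uniform denominator gap $-\lambda_{p,k}-b_k\geq c>0$. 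Equicontinuity of $\varphi_k=L_{\sigma,m,\Omega}[\varphi_k]/(-\lambda_{p,k}-b_k)$ then follows from the uniform continuity of $J_\sigma$ together with the equicontinuity of $\{b_k\}$ (a uniformly convergent sequence of continuous functions) and the gap; Arzel\`a--Ascoli and the normalisation $\|\varphi_k\|_{L^\infty(\Omega)}=1$ deliver a nontrivial continuous limit solving the equation, and your propagation/connectedness argument via $J(0)>0$ upgrades nonnegativity to strict positivity. You correctly identify the denominator bound as the crux, and it is indeed exactly what the strict inequality purchases. The approach is plausibly different in spirit from Coville's own proof (which I recall proceeds more directly via resolvent/measure-theoretic considerations), but it is elementary, uses only tools already deployed elsewhere in the paper, and is a valid alternative.
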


Next, we start with discussing the existence of a positive continuous eigenfunction $\varphi _{p,\sigma }$ associated to the principal eigenvalue $\lambda_{p}(M_{\sigma ,m,\Omega }+a)$ as $\sigma \ll 1$. For $m=0$, this is true, see \cite{Shen-2015-DCDSB}. Here, we only consider the case $m>0$.
\begin{theorem}
\label{le307} Let $m>0$. If $J$ satisfies $(J)$ and $a\in C(\bar{\Omega})$, then there exists $\sigma_{0}>0$ such that there is a positive continuous eigenfunction $\varphi_{p,\sigma}$ associated to $\lambda_p(M_{\sigma,m,\Omega}+a)$ for all $0<\sigma\leq \sigma_{0}$.
\end{theorem}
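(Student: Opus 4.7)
The plan is to reduce the existence statement to the criterion in Lemma \ref{th202}: we need only show that
\begin{equation*}
\lambda_p(M_{\sigma,m,\Omega}+a)<\inf_{x\in\bar{\Omega}}\Big\{\tfrac{1}{\sigma^{m}}p_{\sigma}(x)-a(x)\Big\}\quad\text{for all }0<\sigma\le\sigma_{0},
\end{equation*}
where $p_{\sigma}(x):=\int_{\Omega}J_{\sigma}(x-y)\,dy$. My strategy is to bound the left side by a $\sigma$-independent constant from above and to show the right side blows up as $\sigma\to 0^{+}$, which is where the hypothesis $m>0$ enters in a decisive way.

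For the upper bound on $\lambda_{p}$, I would invoke the variational identity $\lambda_{p}=\lambda_{v}$ from Lemma \ref{th206} and test with the constant $\varphi\equiv 1$. The nonlocal Dirichlet form $\tfrac{1}{2\sigma^{m}}\iint J_{\sigma}(x-y)(\varphi(y)-\varphi(x))^{2}\,dy\,dx$ vanishes on constants, so the Rayleigh quotient reduces to $-\bar a$, giving
\begin{equation*}
\lambda_{p}(M_{\sigma,m,\Omega}+a)\le-\bar a\qquad\text{for every }\sigma>0.
\end{equation*}
For the lower bound on the right side, I would show that $p_{\sigma}$ is uniformly bounded below on $\bar{\Omega}$. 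Since $J$ is continuous with $J(0)>0$, pick $\delta>0$ so that $J\ge J(0)/2$ on $B_{\delta}(0)$; then $J_{\sigma}(x-y)\ge\tfrac{J(0)}{2\sigma^{N}}$ whenever $|x-y|\le\delta\sigma$. Combined with the uniform measure-density bound $|\Omega\cap B_{r}(x)|\ge c_{\Omega}\,r^{N}$ (valid for all $x\in\bar{\Omega}$ and all small $r$ because $\Omega$ is a bounded smooth domain), this yields a constant $c_{0}>0$ with $p_{\sigma}(x)\ge c_{0}$ for all $x\in\bar{\Omega}$ and all small $\sigma$. Because $m>0$, it follows that
\begin{equation*}
\inf_{x\in\bar{\Omega}}\Big\{\tfrac{1}{\sigma^{m}}p_{\sigma}(x)-a(x)\Big\}\ge\tfrac{c_{0}}{\sigma^{m}}-\sup_{\Omega}a\longrightarrow+\infty\quad\text{as }\sigma\to 0^{+}.
\end{equation*}
Hence this infimum eventually exceeds the fixed number $-\bar a$, and Lemma \ref{th202} delivers a positive continuous eigenfunction for every $0<\sigma\le\sigma_{0}$.

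The only nontrivial step I anticipate is the uniform positivity of $p_{\sigma}$; the interior point argument is straightforward, but near $\partial\Omega$ one must check that the measure-density constant $c_{\Omega}$ does not degenerate as $x\to\partial\Omega$ or as $\sigma\to 0^{+}$. This is exactly what smoothness (and hence a uniform interior cone at every boundary point) guarantees, so it is a standard but essential input. Everything else---the variational bound, the limit $\tfrac{c_{0}}{\sigma^{m}}\to\infty$, and the final invocation of Lemma \ref{th202}---is then routine.
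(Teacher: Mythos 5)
Your proposal is correct and follows essentially the same route as the paper: both reduce to the criterion of Lemma \ref{th202}, bound $\lambda_p(M_{\sigma,m,\Omega}+a)\le-\bar a$ via the variational characterisation with the constant test function, and observe that $\frac{1}{\sigma^m}p_\sigma(x)$ blows up as $\sigma\to0^+$ because $m>0$. The only (minor) difference is that you justify the uniform lower bound $p_\sigma\ge c_0$ near $\partial\Omega$ via a measure-density argument, whereas the paper simply asserts $p_\sigma\ge 1/2$ for small $\sigma$ from the pointwise limit $p_\sigma(x)\to1$; your version is if anything the more careful one.
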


\begin{proof}
According to Definition \ref{def205} and Lemma \ref{th206}, we have
\begin{equation*}
\lambda _{p}(M_{\sigma ,m,\Omega }+a)
=\underset{\varphi \in L^{2}(\Omega ),\varphi \not\equiv 0}{\inf }\frac{%
\frac{1}{2\sigma ^{m}}\int_{\Omega }\int_{\Omega }J_{\sigma }(x-y)(\varphi
(y)-\varphi (x))^{2}dydx-\int_{\Omega }a(x)\varphi ^{2}(x)dx}{||\varphi
||_{L^{2}(\Omega )}^{2}},
\end{equation*}%
which implies that
\begin{equation*}
-\underset{\Omega }{\sup }\ a\leq \lambda _{p}(M_{\sigma ,m,\Omega }+a)\leq -\bar{a}.
\end{equation*}
For all $x\in \Omega $, we have%
\begin{align*}
\int_{\Omega }J_{\sigma }(x-y)dy &=\int_{\frac{\Omega -x}{\sigma }}J(z)dz \\
&=\int_{%
\mathbb{R}
^{N}}J(z)\chi _{\frac{\Omega -x}{\sigma }}(z)dz\rightarrow 1\text{ \ \ \ \
as }\sigma \rightarrow 0^{+}.
\end{align*}%
Thus, there exists $\sigma _{1}>0$ such that for all $\sigma \leq \sigma
_{1},$%
\begin{equation*}
\int_{\Omega }J_{\sigma }(x-y)dy\geq 1/2\text{ \ \ \ for all }x\in \Omega .
\end{equation*}%
It follows that%
\begin{equation*}
\underset{x\in \Omega }{\sup }\bigg\{a(x)-\frac{1}{\sigma ^{m}}\int_{\Omega
}J_{\sigma }(x-y)dy\bigg\}\leq \underset{\Omega }{\sup }\ a-\frac{1%
}{2\sigma ^{m}},
\end{equation*}%
that is
\begin{equation*}
-\underset{x\in \Omega }{\sup }\bigg\{a(x)-\frac{1}{\sigma ^{m}}\int_{\Omega }J_{\sigma}(x-y)dy\bigg\}\geq \frac{1}{2\sigma ^{m}}-\underset{\Omega }{\sup }\ a.
\end{equation*}
It is easy to see that there exists $\sigma_{2}>0$ such that for
all $\sigma\leq \sigma_{2}$,
\begin{equation*}
\frac{1}{2\sigma ^{m}}-\underset{\Omega }{\sup }\ a> -\bar{a}.
\end{equation*}
In conclusion, for all $0<\sigma \leq \sigma _{0}:=\min \{\sigma _{1},\sigma _{2}\},$%
\begin{equation*}
-\underset{x\in \Omega }{\sup }\bigg\{%
a(x)-\frac{1}{\sigma ^{m}}\int_{\Omega }J_{\sigma }(x-y)dy\bigg\}>\lambda _{p}(M_{\sigma ,m,\Omega }+a),
\end{equation*}%
which, by Lemma \ref{th202}, enforces the existence of a positive
continuous principal eigenfunction $\varphi _{p,\sigma }$ associated with $\lambda
_{p}(L_{\sigma ,m,\Omega }+a)$. This ends the proof.
\end{proof}

\subsubsection{Asymptotic limits of principal eigenpair for small dispersal spread}

\noindent

To simplify our presentation, let us introduce following notations:%
\begin{align*}
&\mathcal{A(\varphi )} \mathcal{:=}\frac{\int_{\Omega }a(x)\varphi ^{2}(x)dx%
}{||\varphi ||_{L^{2}(\Omega )}^{2}}\text{, \ \ \ }\mathcal{J(\varphi ):=}%
\frac{D_{2}(J)}{2N}\frac{\int_{\Omega }|\nabla \varphi |^{2}(x)dx}{||\varphi
||_{L^{2}(\Omega )}^{2}}, \\
&\mathcal{I}_{\sigma ,m}\mathcal{(\varphi )} \mathcal{:=}\frac{-\frac{1}{%
\sigma ^{m}}\int_{\Omega }\int_{\Omega }J_{\sigma }(x-y)(\varphi (y)-\varphi
(x))\varphi(x)dydx}{||\varphi ||_{L^{2}(\Omega )}^{2}}-\mathcal{A(\varphi )}.
\end{align*}
We are now in position to obtain the asymptotic limits of $\lambda
_{p}(M_{\sigma ,m,\Omega }+a)$ as $\sigma$ tends to zero. For simplicity, we analyze three distinct situations:
$0\leq m<2$, $m=2$ and $m>2$. For the case
$0\leq m<2$, the proof is similar to \cite[Claim 4.2]{Berestycki-2016-JFA}. Here, we omit it. Next, we give the limit of $\lambda_{p}(M_{\sigma,2,\Omega}+a)$ as $\sigma\rightarrow 0^{+}$.

\begin{theorem}
\label{le306} Let $m=2$ and assume that $J$ satisfies $(J)$.
Assume further that $p_{\sigma}(x):=\int_{\Omega}J_{\sigma}(x-y)dy\in
C^{0,\alpha_{1}}(\overline{\Omega})$ with some $\alpha_{1}>0$ and $a\in C^{0,\alpha_{2}}(\overline{%
\Omega})$ with some $\alpha_{2}>0$. If $J$ is radially symmetric, then
\begin{align*}
\underset{\sigma\rightarrow 0^{+}}{\lim}\lambda_{p}(M_{\sigma,2,\Omega}+a)=
\lambda^{N}_{1}\Big(\frac{D_{2}(J)}{2N}\Delta+a\Big),
\end{align*}
where
\begin{align*}
\lambda^{N}_{1}\Big(\frac{D_{2}(J)}{2N}\Delta+a\Big):=\underset
{\varphi\in
H^{1}(\Omega),\varphi \not\equiv0}{\inf}\left\{\frac{D_{2}(J)}{2N} \frac{%
\int_{\Omega}|\nabla\varphi|^{2}(x)dx}{||\varphi||^{2}_{L^{2}(\Omega)}} -%
\frac{\int_{\Omega}a(x)\varphi^{2}(x)dx}{||\varphi||^{2}_{L^{2}(\Omega)}}
\right\}
\end{align*}
\end{theorem}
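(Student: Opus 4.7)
The plan is to work entirely at the level of the Rayleigh quotient supplied by Lemma \ref{th206}, namely
\[
\lambda_{p}(M_{\sigma,2,\Omega}+a)=\inf_{\varphi\in L^{2}(\Omega),\,\varphi\not\equiv 0}\frac{\frac{1}{2\sigma^{2}}\int_{\Omega}\int_{\Omega}J_{\sigma}(x-y)(\varphi(y)-\varphi(x))^{2}dy\,dx-\int_{\Omega}a\varphi^{2}}{\|\varphi\|_{L^{2}(\Omega)}^{2}},
\]
and to match it against the analogous quotient defining $\lambda_{1}^{N}\!\left(\tfrac{D_{2}(J)}{2N}\Delta+a\right)$. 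The passage between the two is governed by the Bourgain--Brezis--Mironescu/Ponce identity: for $\varphi\in H^{1}(\Omega)$ and $J$ radially symmetric, one has
\[
\frac{1}{\sigma^{2}}\int_{\Omega}\int_{\Omega}J_{\sigma}(x-y)(\varphi(y)-\varphi(x))^{2}dy\,dx\;\longrightarrow\;\frac{D_{2}(J)}{N}\int_{\Omega}|\nabla\varphi|^{2}dx,
\]
so after the factor $\tfrac{1}{2}$ the nonlocal form converges exactly to $\frac{D_{2}(J)}{2N}\int_{\Omega}|\nabla\varphi|^{2}$. I would establish the equality of limits by proving the two one-sided bounds separately.

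For the upper bound $\limsup_{\sigma\to 0^{+}}\lambda_{p}(M_{\sigma,2,\Omega}+a)\le \lambda_{1}^{N}\!\left(\tfrac{D_{2}(J)}{2N}\Delta+a\right)$, I would feed a smooth test function (in particular the Neumann principal eigenfunction $\varphi_{1}$, which is regular enough under the hypotheses) into the nonlocal Rayleigh quotient. For such a $\varphi$ one can Taylor expand $\varphi(y)-\varphi(x)$ after the change of variables $y=x-\sigma z$ to get $(\varphi(x-\sigma z)-\varphi(x))^{2}=\sigma^{2}(z\cdot\nabla\varphi(x))^{2}+o(\sigma^{2})$, and then use $\int_{\mathbb{R}^{N}}J(z)z_{i}z_{j}dz=\frac{D_{2}(J)}{N}\delta_{ij}$ (radial symmetry) to recover $\frac{D_{2}(J)}{N}|\nabla\varphi|^{2}$. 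The C$^{0,\alpha_{1}}$ hypothesis on $p_{\sigma}$ is what controls the boundary truncation error that arises because the integral is only over $\Omega$ rather than $\mathbb{R}^{N}$.

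For the lower bound $\liminf_{\sigma\to 0^{+}}\lambda_{p}(M_{\sigma,2,\Omega}+a)\ge \lambda_{1}^{N}\!\left(\tfrac{D_{2}(J)}{2N}\Delta+a\right)$, I would use the principal eigenfunctions $\varphi_{p,\sigma}$ furnished by Theorem \ref{le307}, normalized by $\|\varphi_{p,\sigma}\|_{L^{2}(\Omega)}=1$. From the inequality $-\sup_{\Omega}a\le \lambda_{p}(M_{\sigma,2,\Omega}+a)\le -\bar a$ one reads off that the nonlocal energy $\frac{1}{\sigma^{2}}\iint J_{\sigma}(x-y)(\varphi_{p,\sigma}(y)-\varphi_{p,\sigma}(x))^{2}dy\,dx$ is bounded independently of $\sigma$. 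Applying Ponce's compactness theorem then gives, up to extraction, strong $L^{2}$-convergence $\varphi_{p,\sigma}\to\varphi_{\ast}$ with $\varphi_{\ast}\in H^{1}(\Omega)$ and $\|\varphi_{\ast}\|_{L^{2}(\Omega)}=1$, together with the lower semicontinuity estimate
\[
\liminf_{\sigma\to 0^{+}}\frac{1}{2\sigma^{2}}\int_{\Omega}\int_{\Omega}J_{\sigma}(x-y)(\varphi_{p,\sigma}(y)-\varphi_{p,\sigma}(x))^{2}dy\,dx\;\ge\;\frac{D_{2}(J)}{2N}\int_{\Omega}|\nabla\varphi_{\ast}|^{2}dx.
\]
Combining this with $\int_{\Omega}a\varphi_{p,\sigma}^{2}\to\int_{\Omega}a\varphi_{\ast}^{2}$ and recognizing that $\varphi_{\ast}$ is an admissible competitor in the definition of $\lambda_{1}^{N}$ yields the desired lower bound.

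The main obstacle will be executing the BBM/Ponce argument cleanly on the bounded domain $\Omega$: the nonlocal form involves only $x,y\in\Omega$, so near $\partial\Omega$ the kernel loses mass, and this is precisely why the Hölder regularity of $p_{\sigma}(x)=\int_{\Omega}J_{\sigma}(x-y)dy$ and of $a$ is imposed. I would handle this by localizing against a cut-off, applying the standard BBM limit on interior balls, and then sending the cut-off parameter to $0$ using the uniform Hölder estimate on $p_{\sigma}$ to bound the boundary contribution. The rest is bookkeeping to combine the two one-sided bounds into the claimed equality.
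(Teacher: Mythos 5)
Your upper bound mirrors the paper's: evaluate the Rayleigh quotient of Lemma \ref{th206} on $H^1(\Omega)$ competitors, apply the Bourgain--Brezis--Mironescu limit, and take the infimum. Your lower bound, however, takes a genuinely different and, in this Neumann setting, cleaner route. The paper follows Berestycki--Coville--Vo: it mollifies a sub-eigenfunction $\psi_\sigma\in C_c(\Omega)$ to produce a smooth $\widetilde\varphi_\sigma$, controls the two commutator terms $\int\eta_\tau(x-z)\psi_\sigma(z)(p_\sigma(z)-p_\sigma(x))dz$ and $\int\eta_\tau(x-z)\psi_\sigma(z)(a(z)-a(x))dz$ using the H\"older hypotheses on $p_\sigma$ and $a$, and then appeals to BCV's Lemma 4.5 (Ponce compactness applied to those smoothed test functions). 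You instead apply Ponce compactness directly to the $L^2$-normalised eigenfunctions $\varphi_{p,\sigma}$, whose existence for small $\sigma$ is exactly what Theorem \ref{le307} provides in the Neumann case. Multiplying the eigenvalue equation by $\varphi_{p,\sigma}$ and integrating gives
\[
\frac{1}{2\sigma^2}\int_\Omega\int_\Omega J_\sigma(x-y)(\varphi_{p,\sigma}(x)-\varphi_{p,\sigma}(y))^2\,dxdy=\lambda_p(M_{\sigma,2,\Omega}+a)+\int_\Omega a\varphi_{p,\sigma}^2\le\sup_\Omega a-\bar a,
\]
so the nonlocal energy is uniformly bounded; Ponce then yields strong $L^2$ convergence to some $\varphi_*\in H^1(\Omega)$ with $\|\varphi_*\|_{L^2}=1$ together with the lower-semicontinuity estimate, and $\liminf\lambda_p\ge\tfrac{D_2(J)}{2N}\|\nabla\varphi_*\|_{L^2}^2-\int_\Omega a\varphi_*^2\ge\lambda_1^N\big(\tfrac{D_2(J)}{2N}\Delta+a\big)$. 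This bypasses the mollification step entirely, so the H\"older hypotheses on $p_\sigma$ and $a$ are in fact superfluous for your version of the argument; the paper carries them because it imports BCV's Dirichlet-style construction, where a genuine eigenfunction need not exist. One small correction to your exposition: those H\"older hypotheses control the mollification commutators in the paper's \emph{lower}-bound construction, not a boundary-truncation error in the upper bound; the BBM limit for $\varphi\in H^1(\Omega)$ already holds on a smooth bounded domain without them.
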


\begin{proof}
Let $\rho_{\sigma }(z):=\frac{1}{\sigma ^{2}D_{2}(J)}J_{\sigma }(z)|z|^{2}$, then $\rho _{\sigma }$ is a radial symmetric continuous mollifier such that%
\begin{equation*}
\begin{cases}
\rho _{\sigma }\geq 0,\text{~~~~~~~~~~~~~~~~~~~~~~~ in } \mathbb{R}^{N}, \\
\int_{\mathbb{R}^{N}}\rho _{\sigma }(z)dz=1,\text{~~~~~~~~~~~ }\forall \sigma >0, \\
\underset{\sigma \rightarrow 0^{+}}{\lim }\int_{|z|\geq \delta }\rho _{\sigma
}(z)dz=0,\text{~~~~~~~~~~~ }\forall \delta >0.
\end{cases}
\end{equation*}
For any $\varphi \in H^{1}(\Omega )$, we have%
\begin{align*}
\mathcal{I}_{\sigma ,2}(\varphi ) =&\frac{1}{||\varphi ||_{L^{2}(\Omega
)}^{2}}\bigg(\frac{1}{2\sigma ^{2}}\int_{\Omega }\int_{\Omega }J_{\sigma
}(x-y)(\varphi (x)-\varphi (y))^{2}dydx\bigg)-\mathcal{A}(\varphi ) \\
=&\frac{1}{||\varphi ||_{L^{2}(\Omega )}^{2}}\bigg(\frac{D_{2}(J)}{2}%
\int_{\Omega }\int_{\Omega }\rho _{\sigma }(x-y)\frac{(\varphi (x)-\varphi
(y))^{2}}{|x-y|^{2}}dxdy\bigg)-\mathcal{A}(\varphi ).
\end{align*}%
It follows from the characterisation of Sobolev spaces in
\cite{Bourgain-2001} that%
\begin{equation*}
\underset{\sigma \rightarrow 0^{+}}{\lim }\int_{\Omega }\int_{\Omega }\rho
_{\sigma }(x-y)\frac{(\varphi (x)-\varphi (y))^{2}}{|x-y|^{2}}%
dxdy=K_{2,N}||\nabla \varphi ||_{L^{2}(\Omega )}^{2},
\end{equation*}%
where%
\begin{equation*}
K_{2,N}=\frac{1}{|S^{N-1}|}\int_{S^{N-1}}(s\cdot \mathbf{e}_{1})^{2}ds=\frac{1}{N}.
\end{equation*}%
Thus, for any $\varphi \in H^{1}(\Omega )$, we have%
\begin{equation*}
\underset{\sigma \rightarrow 0^{+}}{\lim \sup }~~ \lambda _{p}(M_{\sigma ,2,\Omega
}+a)\leq \underset{\sigma \rightarrow 0^{+}}{\lim }\mathcal{I}_{\sigma
,2}(\varphi )=\mathcal{J}(\varphi )-\mathcal{A}(\varphi ).
\end{equation*}%
By definition of $\lambda _{1}\Big(\frac{D_{2}(J)}{2N}\Delta +a\Big)$, it is
then standard to obtain%
\begin{equation*}
\underset{\sigma \rightarrow 0^{+}}{\lim \sup }~~ \lambda _{p}(M_{\sigma ,2,\Omega
}+a)\leq \lambda^{N}_{1}\Big(\frac{D_{2}(J)}{2N}\Delta +a\Big).
\end{equation*}

On the other hand, we need to establish the following inequality%
\begin{equation*}
\lambda^{N}_{1}\Big(\frac{D_{2}(J)}{2N}\Delta +a\Big)\leq \underset{\sigma
\rightarrow 0^{+}}{\lim \inf }~~ \lambda _{p}(M_{\sigma ,2,\Omega }+a).
\end{equation*}%
Observe that to obtain the above inequality, it is sufficient to prove that%
\begin{equation*}
\lambda^{N}_{1}\Big(\frac{D_{2}(J)}{2N}\Delta +a\Big)\leq \underset{\sigma
\rightarrow 0^{+}}{\lim \inf }~~ \lambda _{p}(M_{\sigma ,2,\Omega }+a)+3\delta
\text{ \ \ for all }\delta >0.
\end{equation*}

Let us fix $\delta >0$. Firstly, to obtain the above inequality, we construct sufficiently smooth test function $\varphi _{\sigma }$.  We claim that, for all $\sigma >0,$ there exists $\varphi _{\sigma
}\in C_{c}^{\infty }(\Omega )$ such that%
\begin{equation*}
M_{\sigma ,2,\Omega }[\varphi _{\sigma }](x)+(a(x)+\lambda _{p}(M_{\sigma
,2,\Omega }+a)+3\delta )\varphi _{\sigma }(x)\geq 0\text{ \ \ in }x\in
\Omega .
\end{equation*}%
Indeed, by Lemma \ref{th206}, we have $\lambda _{p}(M_{\sigma ,2,\Omega
}+a)=\lambda _{p}^{\prime \prime }(M_{\sigma ,2,\Omega }+a).$ Therefore, for
all $\sigma $, there exists $\psi _{\sigma }\in C_{c}(\Omega )$ such that%
\begin{equation*}
M_{\sigma ,2,\Omega }[\psi _{\sigma }](x)+(a(x)+\lambda _{p}(M_{\sigma
,2,\Omega }+a)+\delta )\psi _{\sigma }(x)\geq 0\text{ \ \ for all }x\in
\Omega .
\end{equation*}%
Now, let $\eta $ be a smooth mollifier of unit mass and with support in the
unit ball. Then we consider $\eta _{\tau }(z):=\frac{1}{\tau ^{N}}\eta (%
\frac{z}{\tau })$ for $\tau >0$ and define%
\begin{equation*}
\widetilde{\varphi }_{\sigma }:=\eta _{\tau }\ast \psi _{\sigma }.
\end{equation*}%
For $\tau $ small enough, say $\tau \leq \tau _{0},$ the function $%
\widetilde{\varphi }_{\sigma }\in C_{c}^{\infty }(\Omega ).$

Let $p_{\sigma }(z)=\int_{\Omega }J_{\sigma }(z-y)dy$, it deduces from some
simple computations that%
\begin{align}
& \frac{1}{\sigma ^{2}}\int_{\Omega }\eta _{\tau }(x-z)\int_{\Omega
}J_{\sigma }(z-y)\psi _{\sigma }(y)dydz  \notag \\
=& \frac{1}{\sigma ^{2}}\int_{\Omega }J_{\sigma }(x-y)\int_{\Omega }\eta
_{\tau }(y-z)\psi _{\sigma }(y)dzdy  \notag \\
=& \frac{1}{\sigma ^{2}}\int_{\Omega }J_{\sigma }(x-y)\widetilde{\varphi }%
_{\sigma }(y)dy  \label{602}
\end{align}%
and%
\begin{align}
& \frac{1}{\sigma ^{2}}\int_{\Omega }\eta _{\tau }(x-z)\int_{\Omega
}J_{\sigma }(z-y)\psi _{\sigma }(z)dydz  \notag \\
=& \frac{1}{\sigma ^{2}}\int_{\Omega }\eta _{\tau }(x-z)\psi _{\sigma
}(z)p_{\sigma }(z)dz  \notag \\
=& \frac{1}{\sigma ^{2}}\int_{\Omega }\eta _{\tau }(x-z)\psi _{\sigma
}(z)p_{\sigma }(x)dz+\frac{1}{\sigma ^{2}}\int_{\Omega }\eta _{\tau
}(x-z)\psi _{\sigma }(z)(p_{\sigma }(z)-p_{\sigma }(x))dz  \label{603}
\end{align}%
and%
\begin{align}
& \int_{\Omega }\eta _{\tau }(x-z)\psi _{\sigma }(z)a(x)dz  \notag \\
=& \int_{\Omega }\eta _{\tau }(x-z)\psi _{\sigma }(z)a(z)dz+\int_{\Omega
}\eta _{\tau }(x-z)\psi _{\sigma }(z)(a(z)-a(x))dz.  \label{604}
\end{align}%
For any $\tau \leq \tau _{0}$, we have%
\begin{equation*}
\eta _{\tau }\ast \big(M_{\sigma ,2,\Omega }[\psi _{\sigma }](x)+(a(x)+\lambda
_{p}(M_{\sigma ,2,\Omega }+a)+\delta )\psi _{\sigma }\big)\geq 0\text{ \ \ for
all }x\in \Omega .
\end{equation*}%
According to (\ref{602}), (\ref{603}) and (\ref{604}), for $\tau \leq \tau _{0}$, we deduce that%
\begin{align*}
M_{\sigma ,2,\Omega }[\widetilde{\varphi }_{\sigma }](x)& +(a(x)+\lambda
_{p}(M_{\sigma ,2,\Omega }+a)+\delta )\widetilde{\varphi }_{\sigma }+\frac{1%
}{\sigma ^{2}}\int_{\Omega }\eta _{\tau }(x-z)\psi _{\sigma }(z)(p_{\sigma
}(z)-p_{\sigma }(x))dz \\
& +\int_{\Omega }\eta _{\tau }(x-z)\psi _{\sigma }(z)(a(z)-a(x))dz\geq 0%
\text{ \ \ \ for all }x\in \Omega .
\end{align*}%
Since $p_{\sigma }(\cdot )$, $a(\cdot )$ are H\"{o}lder continuous, we can
estimate the later two terms of the above inequality by%
\begin{align*}
&\frac{1}{\sigma ^{2}}\bigg|\int_{\Omega }\eta _{\tau }(x-z)\psi _{\sigma
}(z)(p_{\sigma }(z)-p_{\sigma }(x))dz\bigg| \\
\leq &\frac{1}{\sigma ^{2}}\int_{\Omega }\eta _{\tau }(x-z)\psi _{\sigma }(z)%
\frac{|p_{\sigma }(z)-p_{\sigma }(x)|}{|z-x|^{\alpha _{1}}}|z-x|^{\alpha
_{1}}dz \\
\leq &\frac{1}{\sigma ^{2}}K_{1}\tau ^{\alpha _{1}}\widetilde{\varphi }%
_{\sigma }(x)
\end{align*}%
and%
\begin{align*}
&\bigg|\int_{\Omega }\eta _{\tau }(x-z)\psi _{\sigma }(z)(a(z)-a(x))dz\bigg|
\\
\leq &K_{2}\tau ^{\alpha _{2}}\widetilde{\varphi }_{\sigma }(x)
\end{align*}%
where $K_{1}$ and $K_{2}$ are respectively the H\"{o}lder semi-norm of $%
p_{\sigma }$ and $a.$

Thus, for $\tau <\min \Big\{\tau _{0},\big(\frac{\sigma
^{2}\delta }{2K_{1}}\big)^{\frac{1}{\alpha _{1}}}, \big(\frac{\delta }{2K_{2}}\big)^{%
\frac{1}{\alpha _{2}}}\Big\},$ we have%
\begin{equation}
M_{\sigma ,2,\Omega }[\widetilde{\varphi }_{\sigma }](x)+(a(x)+\lambda
_{p}(M_{\sigma ,2,\Omega }+a)+3\delta )\widetilde{\varphi }_{\sigma }(x)\geq
0\text{ \ \ \ \ for all }x\in \Omega .  \label{605}
\end{equation}

Let us consider now $\varphi _{\sigma }:=\gamma \widetilde{\varphi }_{\sigma
},$ where $\gamma $ is a positive constant to be chosen. From (\ref{605}),
we obviously have%
\begin{equation}
M_{\sigma ,2,\Omega }[\varphi _{\sigma }](x)+(a(x)+\lambda _{p}(M_{\sigma
,2,\Omega }+a)+3\delta )\varphi _{\sigma }(x)\geq 0\text{ \ \ \ \ for all }%
x\in \Omega .  \label{606}
\end{equation}%
By taking%
\begin{equation*}
\gamma :=\bigg(\frac{\int_{\Omega }\psi _{\sigma }^{2}(x)dx}{\int_{\Omega }%
\widetilde{\varphi }_{\sigma }^{2}(x)dx}\bigg)^{\frac{1}{2}},
\end{equation*}%
we get%
\begin{equation}
\frac{\int_{\Omega }\psi _{\sigma }^{2}(x)dx}{\int_{\Omega }\varphi _{\sigma
}^{2}(x)dx}=1.  \label{607}
\end{equation}

It is similar to \cite[Lemma 4.5]{Berestycki-2016-JFA}, we obtain%
\begin{equation*}
\lambda^{N}_{1}\Big(\frac{D_{2}(J)}{2N}\Delta +a\Big)\leq \underset{\sigma
\rightarrow 0^{+}}{\lim \inf }~~ \lambda _{p}(M_{\sigma ,2,\Omega }+a)+3\delta .
\end{equation*}
The proof is complete.
\end{proof}
Finally, we point out that the asymptotic behavior of principal eigenfunction of Theorem \ref{th102} ($ii$) is omitted since it is similar to \cite[Theorem 1.5]{Berestycki-2016-JFA}.
To complete the proof of Theorem \ref{th102}, we only need to prove the
asymptotic behavior of principal eigenpair for $m>2$.

\begin{proof}[Proof of Theorem \ref{th102}(iii)]
By Theorem \ref{le307}, for every $\sigma\leq \sigma_{0}$, let $\varphi_{p,\sigma}$ be a positive
eigenfunction associated with $\lambda_{p,\sigma}$, i.e.,
$\lambda_{p,\sigma}$ satisfies
\begin{align}  \label{*}
M_{\sigma,m,\Omega}[\varphi_{p,\sigma}](x)+(a(x)+\lambda_{p,\sigma})
\varphi_{p,\sigma}(x)=0\ \ \ \ \text{for all}\ x\in \Omega.
\end{align}
Let us normalise $\varphi_{p,\sigma}$ by $||\varphi_{p,\sigma}||_{L^{2}(%
\Omega)}=1$.

Set $\{\sigma _{n}\}_{n\in\mathbb{N}}$ with $\sigma _{n}\leq \sigma$ be any sequence of positive reals converging to $0$. Since $||\varphi _{p,\sigma _{n} }||_{L^{2}(\Omega )}\\=1$, there exists a subsequence $\varphi _{p,\sigma _{n} }\rightharpoonup \varphi$ in $%
L^{2}(\Omega )\ ($for convenience, we still denote $\varphi _{p,\sigma _{n} }\rightharpoonup \varphi$).
Let $\rho _{\sigma _{n}}(x-y):=\frac{1}{\sigma _{n}^{2}D_{2}(J)}J_{\sigma
_{n}}(x-y)|x-y|^{2}$.
Multiplying \eqref{*} by $\varphi_{p,\sigma _{n}}$ and integrating over $\Omega$,
we get
\begin{align*}
&\frac{D_{2}(J)}{2}\int_{\Omega}\int_{\Omega}\rho_{\sigma _{n}}(x-y) \frac{|\varphi_{p,\sigma _{n}}(x)-\varphi_{p,\sigma _{n}}(y)|^{2}}{|x-y|^{2}}dxdy \\
=&\sigma _{n}^{m-2}\int_{\Omega}(a(x)+\lambda_{p,\sigma _{n}})\varphi_{p,\sigma _{n}}^{2}dx\leq \sigma^{m-2}\big(\sup_{\Omega}a-\bar{a}\big).
\end{align*}
Thanks to \cite[Theorems 1.2 and 1.3]{Ponce-2004-JEMS},
along with a sequence, we have
\begin{equation}\label{**}
\varphi _{p,\sigma_{n} }\rightarrow \varphi \ \ \text{ in }L^{2}(\Omega ) \ \ and \ \ \varphi\in W^{1,2}.
\end{equation}%
Moreover,
\begin{equation*}
\int_{\Omega}|\nabla\varphi(x)|^{2}dx\leq \sigma^{m-2}(\frac{\sup_{\Omega}a-\bar{a}}{N}).
\end{equation*}%
Since the above inequality holds for every $\sigma$, we obtain
\begin{equation*}
\int_{\Omega}|\nabla\varphi(x)|^{2}dx\leq 0,
\end{equation*}%
which implies that
\begin{equation*}
\varphi(x)\equiv |\Omega|^{-\frac{1}{2}}.
\end{equation*}%
As a consequence, we have
\begin{equation*}
\underset{\sigma\rightarrow 0^{+}}{\lim}\|\varphi_{\sigma,p}-
|\Omega|^{-\frac{1}{2}}\|_{L^{2}(\Omega)}=0
\end{equation*}
and \eqref{**} holds for the entire sequence.

Integrating \eqref{*} over $\Omega$ yields
\begin{equation*}
\int_{\Omega}(a(x)+\lambda_{p,\sigma})\varphi_{p,\sigma}dx=0,
\end{equation*}
i.e.,
\begin{equation*}
\lambda_{p,\sigma}=-\frac{\int_{\Omega}a(x)\varphi_{p,\sigma}dx}
{\int_{\Omega}\varphi_{p,\sigma}dx},
\end{equation*}
which enforces
\begin{equation*}
\lim\limits_{\sigma\to0^{+}}\lambda_p(M_{\sigma,m,\Omega}+a)=
-\bar{a}.
\end{equation*}

\end{proof}

\section{Asymptotic behaviors of positive stationary solution}
\noindent

In this section, we discuss the impact of the dispersal spread and the dispersal budget on the persistence of the population. To do so, we analyze the persistence criteria with respect to the dispersal spread $\sigma$ and the cost parameter $m$. In other words, we investigate the dependence of the positive solution $\theta_{\sigma}$ on $\sigma$ and $m$ for the following equation
\begin{equation}  \label{401}
M_{\sigma,m,\Omega}[\theta]+\theta(a(x)-\theta)=0\ \ \ \ \text{in} \ \Omega.
\end{equation}
To this end, we obtain the existence and uniqueness of the positive solution $\theta_{\sigma}$ to \eqref{401} when $\sigma$ is small or large enough. Furthermore, we analyze the asymptotic behaviors of the positive solution $\theta_{\sigma}$ as $\sigma$ tends to zero or $\infty$ and seek to understand the influence of $m$ on the results.


\subsection{Large dispersal spread}

\noindent

We start by showing some priori estimates for the solutions $\theta_{\sigma}$.

\begin{lemma}
\label{le401} There exist positive constants $C_{1},C_{2}$ such that
for any positive bounded solution $\theta_{\sigma}$ of \eqref{401}, the
following estimates hold
\begin{itemize}
\item[(i)] $||\theta_{\sigma}||_{L^{\infty}(\Omega)}\leq C_{1},\ ||\theta_{\sigma}||^{2}_{L^{2}(\Omega)}\leq C_{2}$\ ;
\item[(ii)] $\theta_{\sigma}\geq \Big(a-\frac{1}{\sigma^{m}}%
\int_{\Omega}J_{\sigma}(x-z)dz\Big)^{+}$.
\end{itemize}
\end{lemma}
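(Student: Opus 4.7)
The plan is to rewrite the stationary equation \eqref{401} as the pointwise quadratic identity
\[
\theta_{\sigma}(x)^{2} + \theta_{\sigma}(x)\Big[\tfrac{p_{\sigma}(x)}{\sigma^{m}} - a(x)\Big] = \tfrac{1}{\sigma^{m}}\int_{\Omega}J_{\sigma}(x-y)\theta_{\sigma}(y)\,dy,
\]
where $p_{\sigma}(x):=\int_{\Omega}J_{\sigma}(x-y)\,dy$. Both conclusions will then be read off this single identity: the upper bound in (i) from evaluating at an extremum and using the KPP structure, and the lower bound in (ii) from the nonnegativity of the right-hand side.

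For (i), I first observe that the right-hand side is a continuous function of $x$, being the convolution of the bounded function $\theta_{\sigma}$ with the continuous compactly supported kernel $J_{\sigma}$. Solving the quadratic for the positive root then exhibits $\theta_{\sigma}$ as a continuous function on $\bar{\Omega}$, so that $\theta_{\sigma}$ attains its maximum at some $x_{0}\in\bar{\Omega}$. At this point, $M_{\sigma,m,\Omega}[\theta_{\sigma}](x_{0})\leq 0$ because $\theta_{\sigma}(y)-\theta_{\sigma}(x_{0})\leq 0$ for every $y\in\Omega$, and the equation \eqref{401} then forces $\theta_{\sigma}(x_{0})(a(x_{0})-\theta_{\sigma}(x_{0}))\geq 0$. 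Since $\theta_{\sigma}(x_{0})>0$, this yields $\|\theta_{\sigma}\|_{L^{\infty}(\Omega)}\leq \sup_{\Omega}a=:C_{1}$, and the $L^{2}$-estimate follows at once from $\|\theta_{\sigma}\|_{L^{2}(\Omega)}^{2}\leq |\Omega|\,C_{1}^{2}=:C_{2}$.

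For (ii), the right-hand side of the quadratic identity is manifestly nonnegative, so the identity rewrites as
\[
\theta_{\sigma}(x)\Big(\theta_{\sigma}(x) - \big(a(x)-p_{\sigma}(x)/\sigma^{m}\big)\Big) \geq 0 \quad \text{in } \Omega.
\]
Dividing by $\theta_{\sigma}(x)>0$ yields $\theta_{\sigma}(x)\geq a(x)-p_{\sigma}(x)/\sigma^{m}$, which combined with positivity gives the claimed pointwise lower bound $\theta_{\sigma}\geq(a-p_{\sigma}/\sigma^{m})^{+}$.

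I do not anticipate a genuine obstacle: the lemma is essentially a repackaging of the KPP maximum principle together with the algebraic structure of the stationary equation. The only technical point worth flagging is the continuity of $\theta_{\sigma}$ used to realize the supremum in (i); should one prefer not to invoke the quadratic formula, the same $L^{\infty}$ bound is obtained by taking a maximizing sequence $\{x_{k}\}\subset\Omega$ with $\theta_{\sigma}(x_{k})\to \|\theta_{\sigma}\|_{L^{\infty}(\Omega)}$, evaluating \eqref{401} at $x_{k}$, and passing to the limit along a subsequence on which $a(x_{k})$ converges.
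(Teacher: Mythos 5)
Your proof is correct, and it takes a genuinely more elementary route than the paper's. For (i) the paper invokes the constant $\|a\|_{L^{\infty}(\Omega)}$ as a super-solution together with ``uniqueness of positive solutions and the comparison principle,'' whereas you evaluate the equation at (or along a maximizing sequence for) the supremum of $\theta_{\sigma}$ and use only the sign of $M_{\sigma,m,\Omega}[\theta_{\sigma}]$ there; this avoids appealing to uniqueness, which in the paper's framework is itself conditional on $\lambda_{p}(M_{\sigma,m,\Omega}+a)<0$ and is not among the lemma's hypotheses, so your version is cleaner. Your handling of the continuity issue (positive root of the quadratic, or the maximizing-sequence fallback) is exactly the care the statement's ``positive bounded solution'' wording calls for. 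For the $L^{2}$ bound the paper integrates the equation to get $\int_{\Omega}\theta_{\sigma}^{2}=\int_{\Omega}a\theta_{\sigma}\le C_{1}\int_{\Omega}a^{+}$, while you simply use $\|\theta_{\sigma}\|_{L^{2}}^{2}\le|\Omega|C_{1}^{2}$; both are fine (the paper's identity $\int_{\Omega}\theta_{\sigma}(a-\theta_{\sigma})=0$ is reused later in Section 3, which is presumably why it is recorded in that form). For (ii) the paper verifies by a displayed computation that $\big(a-\sigma^{-m}p_{\sigma}\big)^{+}$ is a sub-solution and then concludes by an (unstated) comparison argument; your observation that the lower bound falls out pointwise from the nonnegativity of $\sigma^{-m}\int_{\Omega}J_{\sigma}(x-y)\theta_{\sigma}(y)\,dy$ after dividing by $\theta_{\sigma}(x)>0$ is shorter and dispenses with the comparison principle altogether. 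In both items your argument buys independence from the well-posedness machinery at the cost of nothing.
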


\begin{proof}
(i) Since $C_{1}=||a||_{\infty }$ is always a super-solution of \eqref{401},
by the uniqueness of positive solutions of \eqref{401} and comparison principle, $||\theta_{\sigma}||_{L^{\infty}(\Omega)}\leq C_{1}$.
Integrating \eqref{401} over $\Omega $, we get
\begin{equation*}
\int_{\Omega }\theta _{\sigma }^{2}(x)dx=\int_{\Omega }a(x)\theta _{\sigma
}(x)dx\leq \int_{\Omega }a^{+}(x)\theta _{\sigma }(x)dx\leq
C_{1}\int_{\Omega }a^{+}(x)dx:=C_{2}
\end{equation*}%

(ii) Observe that $\Big(a-\frac{1}{\sigma ^{m}}\int_{\Omega }J_{\sigma
}(x-z)dz\Big)^{+}$ is always a sub-solution of \eqref{401}. In fact, we get%
\begin{align*}
&\frac{1}{\sigma ^{m}}\int_{\Omega }J_{\sigma }(x-y)\bigg(a-\frac{1}{\sigma
^{m}}\int_{\Omega }J_{\sigma }(y-z)dz\bigg)^{+}dy \\
&+\bigg(a-\frac{1}{\sigma ^{m}}\int_{\Omega }J_{\sigma }(x-z)dz\bigg)\bigg(a-%
\frac{1}{\sigma ^{m}}\int_{\Omega }J_{\sigma }(x-z)dz\bigg)^{+} \\
&-\bigg[\bigg(a-\frac{1}{\sigma ^{m}}\int_{\Omega }J_{\sigma }(x-z)dz\bigg)%
^{+}\bigg]^{2} \geq 0,
\end{align*}%
which implies that%
\begin{equation*}
\theta _{\sigma }\geq \bigg(a-\frac{1}{\sigma ^{m}}\int_{\Omega
}J_{\sigma}(x-z)dz\bigg)^{+}.
\end{equation*}
The proof is complete.
\end{proof}

Next, we derive an upper bound for $\sigma$ large enough.

\begin{lemma}
\label{le402} There exists $\sigma_{0}>0$ such that for all $m\geq0$ and $%
\sigma\geq\sigma_{0}$, any positive bounded solution $\theta_{\sigma}$ of %
\eqref{401} satisfies
\begin{align*}
\theta_{\sigma}\leq a^{+}+1/\sigma^{\frac{N}{4}}\ \ \ \text{in}\ \Omega.
\end{align*}
\end{lemma}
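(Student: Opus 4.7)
The plan is a nonlocal-maximum-principle argument by contradiction. If the stated inequality fails, there will be a sequence $\sigma_n \to \infty$ and points $x_n \in \bar\Omega$ at which the continuous function $w_n := \theta_{\sigma_n} - a^+ - \sigma_n^{-N/4}$ attains a strictly positive maximum. (Continuity of $\theta_{\sigma_n}$ on $\bar\Omega$ follows by rewriting \eqref{401} as a quadratic in $\theta$ with continuous coefficients, and the equation then extends to $\bar\Omega$ by continuity.) At $x_n$ I will extract two contradictory estimates on $M_{\sigma_n,m,\Omega}[\theta_{\sigma_n}](x_n)$.

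For the lower bound I will read off $M_{\sigma_n,m,\Omega}[\theta_{\sigma_n}](x_n) = \theta_{\sigma_n}(x_n)\bigl(\theta_{\sigma_n}(x_n) - a(x_n)\bigr)$ directly from \eqref{401}. Since $w_n(x_n) > 0$ and $a \leq a^+$, both factors are at least $\sigma_n^{-N/4}$, forcing $M_{\sigma_n,m,\Omega}[\theta_{\sigma_n}](x_n) \geq \sigma_n^{-N/2}$. For the upper bound I will exploit the maximality of $x_n$: for every $y \in \Omega$, $\theta_{\sigma_n}(y) - \theta_{\sigma_n}(x_n) \leq a^+(y) - a^+(x_n)$, and hence $|\theta_{\sigma_n}(y) - \theta_{\sigma_n}(x_n)| \leq 2\|a\|_{L^\infty(\Omega)}$ on the support of $J_{\sigma_n}(x_n - \cdot)$. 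Inserting this into the integral defining $M_{\sigma_n,m,\Omega}$ gives
\[
M_{\sigma_n,m,\Omega}[\theta_{\sigma_n}](x_n) \leq \frac{2\|a\|_{L^\infty(\Omega)}}{\sigma_n^{m}} \, p_{\sigma_n}(x_n).
\]

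The last step is to quantify the smallness of $p_{\sigma_n}(x_n)$. Changing variables, $p_{\sigma_n}(x_n) = \int_{(\Omega - x_n)/\sigma_n} J(z)\,dz$, and since $J \in L^\infty$ while $(\Omega - x_n)/\sigma_n$ has Lebesgue measure $|\Omega|/\sigma_n^N$, this yields $p_{\sigma_n}(x_n) \leq \|J\|_{L^\infty}|\Omega|\,\sigma_n^{-N}$. Combining the two bounds produces $\sigma_n^{-N/2} \leq C\,\sigma_n^{-(m+N)}$, equivalently $\sigma_n^{m+N/2} \leq C$, which is impossible for large $\sigma_n$ since $m + N/2 > 0$. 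The only real conceptual obstacle is that nonlocal operators do not enjoy a pointwise maximum principle in the classical sense; however, the elementary inequality $\theta_{\sigma_n}(y) - \theta_{\sigma_n}(x_n) \leq a^+(y) - a^+(x_n)$ at the maximiser of $w_n$ substitutes perfectly for it, and the remainder is routine kernel bookkeeping.
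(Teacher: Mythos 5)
Your argument is correct, and it reaches the same conclusion by a genuinely different mechanism than the paper. The paper constructs the explicit super-solution $\zeta_{\sigma}(x)=\sigma^{-(N/2-\delta)}+a^{+}(x)$, verifies by direct computation that $M_{\sigma,m,\Omega}[\zeta_{\sigma}]+\zeta_{\sigma}(a-\zeta_{\sigma})<0$ for $\sigma$ large (the key estimates being $\frac{1}{\sigma^{m}}\int_{\Omega}J_{\sigma}(x-y)a^{+}(y)\,dy\leq \|J\|_{\infty}\sigma^{-(N+m)}\int_{\Omega}a^{+}$ and the reaction term contributing $-\sigma^{-(N-2\delta)}$), and then invokes the comparison principle before setting $\delta=N/4$. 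You instead evaluate the equation at a positive maximum of $\theta_{\sigma}-a^{+}-\sigma^{-N/4}$ and play the pointwise identity $M_{\sigma,m,\Omega}[\theta_{\sigma}](x_{0})=\theta_{\sigma}(x_{0})(\theta_{\sigma}(x_{0})-a(x_{0}))>\sigma^{-N/2}$ against the kernel bound $M_{\sigma,m,\Omega}[\theta_{\sigma}](x_{0})\leq 2\|a\|_{\infty}\|J\|_{\infty}|\Omega|\,\sigma^{-(m+N)}$, the latter coming from the one-sided inequality $\theta_{\sigma}(y)-\theta_{\sigma}(x_{0})\leq a^{+}(y)-a^{+}(x_{0})$ valid at the maximiser. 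The two proofs rest on the identical quantitative fact ($p_{\sigma}=O(\sigma^{-N})$ loses to the quadratic gain $\sigma^{-N/2}$, uniformly in $m\geq 0$ since $m+N/2>0$), but yours is self-contained: it does not presuppose the comparison principle for the nonlocal KPP equation, only the continuity of $\theta_{\sigma}$ up to $\overline{\Omega}$, which you correctly justify by solving the quadratic in $\theta_{\sigma}(x)$ with continuous coefficients (note the discriminant is strictly positive because $J(0)>0$ and $\theta_{\sigma}>0$). Two cosmetic points: the absolute-value form $|\theta_{\sigma}(y)-\theta_{\sigma}(x_{0})|\leq 2\|a\|_{\infty}$ does not follow from maximality alone (only the one-sided bound does, which is all you use; the two-sided bound holds anyway from $0\leq\theta_{\sigma}\leq\|a\|_{\infty}$), and the contradiction framing can be dispensed with, since the final inequality $\sigma^{m+N/2}<2\|a\|_{\infty}\|J\|_{\infty}|\Omega|$ directly yields an explicit $\sigma_{0}$ independent of $m$.
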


\begin{proof}
Let $\delta \in (0,N/2)$ and $\zeta _{\sigma }(x):=1/\sigma ^{\frac{N}{2}%
-\delta }+a^{+}(x)$ for $x\in \Omega $. We will show that $\zeta _{\sigma }$
is a super-solution to \eqref{401} when $\sigma $ is large enough. Indeed,
we have for $x\in \Omega $,%
\begin{align*}
&M_{\sigma ,m,\Omega }[\zeta _{\sigma }](x)+\zeta _{\sigma }(x)(a(x)-\zeta
_{\sigma }(x)) \\
=&\frac{1}{\sigma ^{m}}\int_{\Omega }J_{\sigma }(x-y)(\zeta _{\sigma
}(y)-\zeta _{\sigma }(x))dy+\bigg(\frac{1}{\sigma ^{\frac{N}{2}-\delta }}%
+a^{+}(x)\bigg)\bigg(a(x)-a^{+}(x)-\frac{1}{\sigma ^{\frac{N}{2}-\delta }}%
\bigg) \\
=&\frac{1}{\sigma ^{m}}\int_{\Omega }J_{\sigma }(x-y)(a^{+}(y)-a^{+}(x))dy+\bigg(%
\frac{1}{\sigma ^{\frac{N}{2}-\delta }}+a^{+}(x)\bigg)\bigg(a(x)-a^{+}(x)-%
\frac{1}{\sigma ^{\frac{N}{2}-\delta }}\bigg) \\
\leq &\frac{||J||_{L^{\infty }(\mathbb{R}^{N})}}{\sigma ^{N+m}}\int_{\Omega }a^{+}(y)dy-\frac{1}{\sigma ^{m}}%
\int_{\Omega }J_{\sigma }(x-y)dya^{+}(x) \\
&+\bigg(\frac{1}{\sigma ^{\frac{N}{2}-\delta }}+a^{+}(x)\bigg)\bigg(a(x)-a^{+}(x)-%
\frac{1}{\sigma ^{\frac{N}{2}-\delta }}\bigg) \\
\leq &\frac{||J||_{L^{\infty }(\mathbb{R}^{N})}}{\sigma ^{N+m}}\int_{\Omega }a^{+}(y)dy-\frac{1}{\sigma ^{N-2\delta }},
\end{align*}%
where in the last inequality we use%
\begin{equation*}
\bigg(\frac{1}{\sigma ^{\frac{N}{2}-\delta }}+a^{+}(x)\bigg)\bigg(%
a(x)-a^{+}(x)-\frac{1}{\sigma ^{\frac{N}{2}-\delta }}\bigg)\leq -\frac{1}{%
\sigma ^{N-2\delta }}\text{ \ \ for all }x\in \Omega .
\end{equation*}%
Thus, for $\sigma $ large enough, we get for all $x\in \Omega ,$%
\begin{align*}
&M_{\sigma ,m,\Omega }[\zeta _{\sigma }](x)+\zeta _{\sigma }(x)(a(x)-\zeta
_{\sigma }(x)) \\
\leq &\frac{||J||_{L^{\infty }(\mathbb{R} ^{N})}}{\sigma ^{N+m}}\int_{\Omega
}a^{+}(y)dy-\frac{1}{\sigma ^{N-2\delta }} \\
=&\frac{1}{\sigma ^{N-2\delta }}\bigg(\frac{||J||_{L^{\infty }(\mathbb{R}
^{N})}}{\sigma ^{m+2\delta }}\int_{\Omega }a^{+}(y)dy-1\bigg)<0.
\end{align*}%
Therefore, for $\sigma \gg 1,$ we get $\theta _{\sigma }\leq \zeta _{\sigma
}.$ We end the proof by taking $\delta =N/4.$
\end{proof}

Finally, we prove the continuity of $\theta_{\sigma}$ with respect to $%
\sigma $.

\begin{theorem}
\label{le400} Assume that $J$ satisfies $(J)$. Then any positive bounded solution $\theta_{\sigma}$ of \eqref{401} is continuous with
respect to $\sigma$.
\end{theorem}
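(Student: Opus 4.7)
Fix $\sigma_0>0$ and take any sequence $\sigma_n\to\sigma_0$; set $\theta_n:=\theta_{\sigma_n}$. My plan is to establish $\theta_n\to\theta_{\sigma_0}$ via a weak-compactness/uniqueness argument, promoting weak convergence to strong convergence through the algebraic (quadratic) structure of \eqref{401}, and then invoking the standard subsequence principle to upgrade subsequential convergence to convergence of the full sequence.

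From Lemma \ref{le401}(i) I have the uniform bound $\|\theta_n\|_{L^{\infty}(\Omega)}\le\|a\|_{\infty}$, so along a subsequence (not relabelled) $\theta_n\rightharpoonup\theta$ weakly in $L^2(\Omega)$ for some $\theta\in L^{\infty}(\Omega)$ with $0\le\theta\le\|a\|_{\infty}$ a.e. Setting $p_\sigma(x):=\int_\Omega J_\sigma(x-y)\,dy$ and $L_{\sigma,m,\Omega}[\psi](x):=\sigma^{-m}\int_\Omega J_\sigma(x-y)\psi(y)\,dy$, equation \eqref{401} is the pointwise quadratic
\begin{equation*}
\theta_n(x)^2+\theta_n(x)\Big[\tfrac{p_{\sigma_n}(x)}{\sigma_n^m}-a(x)\Big]=L_{\sigma_n,m,\Omega}[\theta_n](x),
\end{equation*}
and since $\theta_n\ge0$ the positive branch of the quadratic formula gives
\begin{equation*}
\theta_n(x)=\tfrac12\Big\{a(x)-\tfrac{p_{\sigma_n}(x)}{\sigma_n^m}+\sqrt{\Big(a(x)-\tfrac{p_{\sigma_n}(x)}{\sigma_n^m}\Big)^2+4L_{\sigma_n,m,\Omega}[\theta_n](x)}\,\Big\}.
\end{equation*}

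The integral operator $L_{\sigma,m,\Omega}$ has kernel $\sigma^{-(m+N)}J((x-y)/\sigma)\in L^2(\Omega\times\Omega)$, so it is Hilbert--Schmidt and hence compact on $L^2(\Omega)$; moreover, $\sigma\mapsto L_{\sigma,m,\Omega}$ is continuous in the operator norm, which I would verify by applying dominated convergence to its kernel on $\Omega\times\Omega$ (the kernels are bounded uniformly for $\sigma$ near $\sigma_0$ and converge pointwise by continuity of $J$). Combining norm convergence $L_{\sigma_n,m,\Omega}\to L_{\sigma_0,m,\Omega}$ with compactness of the limit operator and weak convergence $\theta_n\rightharpoonup\theta$ then yields $L_{\sigma_n,m,\Omega}[\theta_n]\to L_{\sigma_0,m,\Omega}[\theta]$ strongly in $L^2(\Omega)$, hence pointwise a.e.\ along a further subsequence. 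Since $p_{\sigma_n}(x)/\sigma_n^m\to p_{\sigma_0}(x)/\sigma_0^m$ pointwise as well, the quadratic-formula representation forces $\theta_n(x)\to\theta(x)$ pointwise a.e., and then strongly in $L^2(\Omega)$ by dominated convergence using the uniform $L^{\infty}$ bound. Passing to the limit in \eqref{401} shows that $\theta$ is a nonnegative bounded solution at $\sigma=\sigma_0$.

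It remains to identify $\theta$ with $\theta_{\sigma_0}$. To rule out $\theta\equiv0$ I would argue by contradiction: if $\|\theta_n\|_{L^2(\Omega)}\to0$, the renormalised functions $\tilde\theta_n:=\theta_n/\|\theta_n\|_{L^2(\Omega)}$ satisfy $M_{\sigma_n,m,\Omega}[\tilde\theta_n]+(a(x)-\theta_n)\tilde\theta_n=0$; the same compactness argument extracts a strong subsequential limit $\tilde\theta$ with $\|\tilde\theta\|_{L^2(\Omega)}=1$ and $M_{\sigma_0,m,\Omega}[\tilde\theta]+a\,\tilde\theta=0$, which would force $\lambda_p(M_{\sigma_0,m,\Omega}+a)=0$ and contradict $\lambda_p(M_{\sigma_0,m,\Omega}+a)<0$ (which is valid by Lemma \ref{th207}, since the positive solution $\theta_{\sigma_0}$ exists). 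Hence $\theta\not\equiv0$, and the uniqueness clause of Lemma \ref{th207} yields $\theta=\theta_{\sigma_0}$. The subsequence principle then produces the full convergence $\theta_n\to\theta_{\sigma_0}$, establishing continuity. I expect the main obstacle to be Step~2---upgrading weak to strong $L^2$ convergence without any a priori regularity on $\theta_n$ beyond the $L^{\infty}$ bound---and resolving $\theta_n$ algebraically via the quadratic formula and leveraging Hilbert--Schmidt compactness of $L_{\sigma,m,\Omega}$ seems to me the cleanest route.
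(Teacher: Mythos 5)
Your route is genuinely different from the paper's. The paper fixes $\sigma_0$, linearises $G(\sigma,\theta):=M_{\sigma,m,\Omega}[\theta]+\theta(a-\theta)$ at $(\sigma_0,\theta_{\sigma_0})$, shows that the spectral bound of $G_\theta(\sigma_0,\theta_{\sigma_0})=M_{\sigma_0,m,\Omega}+(a-2\theta_{\sigma_0})$ is strictly negative (since $M_{\sigma_0,m,\Omega}+(a-\theta_{\sigma_0})$ has principal eigenvalue $0$ with positive eigenfunction $\theta_{\sigma_0}$, and subtracting the positive function $\theta_{\sigma_0}$ pushes the spectrum strictly to the left), and then invokes the Implicit Function Theorem in $C(\bar\Omega)$. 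Your compactness-plus-uniqueness scheme avoids the linearised spectral analysis entirely, and its core step is sound: resolving $\theta_n$ pointwise by the positive branch of the quadratic formula and converting weak $L^2$ convergence into strong convergence via the Hilbert--Schmidt compactness of $L_{\sigma,m,\Omega}$ and the operator-norm continuity of $\sigma\mapsto L_{\sigma,m,\Omega}$ is correct, and in fact upgrades to $L^\infty$ convergence because $L_{\sigma_n,m,\Omega}[\theta_n]\to L_{\sigma_0,m,\Omega}[\theta]$ uniformly.

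The genuine gap is in the non-vanishing step. First, "the same compactness argument" does not apply to $\tilde\theta_n=\theta_n/\|\theta_n\|_{L^2}$: the equation for $\tilde\theta_n$ is linear, so to resolve it pointwise you must divide by $\sigma_n^{-m}p_{\sigma_n}(x)-a(x)+\theta_n(x)$, which is only nonnegative (by Lemma \ref{le401}(ii)) and, precisely in the contradiction scenario $\theta_n\to0$, need not be bounded below (e.g. wherever $a\equiv\sigma_0^{-m}p_{\sigma_0}$ the multiplier is comparable to $\theta_n$). Without a uniform lower bound you cannot upgrade $\tilde\theta_n\rightharpoonup\tilde\theta$ to strong convergence, so $\|\tilde\theta\|_{L^2}=1$ is unjustified and $\tilde\theta$ could vanish. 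Second, even granting a nonnegative nontrivial $\tilde\theta$ with $M_{\sigma_0,m,\Omega}[\tilde\theta]+a\tilde\theta=0$, this only yields $\lambda_p(M_{\sigma_0,m,\Omega}+a)\le0$ (via $\lambda_p'$ or $\lambda_v$), not $=0$; to use $\tilde\theta$ in the sup-definition of $\lambda_p$ you would additionally need $\tilde\theta$ continuous and strictly positive. A correct and shorter route: testing the sup-definition with $\theta_n$ itself gives $M_{\sigma_n,m,\Omega}[\theta_n]+(a-\sup_\Omega\theta_n)\theta_n=\theta_n(\theta_n-\sup_\Omega\theta_n)\le0$, hence $\lambda_p(M_{\sigma_n,m,\Omega}+a)\ge-\sup_\Omega\theta_n$; if $\theta\equiv0$, your quadratic formula together with the barrier of Lemma \ref{le401}(ii) forces $\sup_\Omega\theta_n\to0$, so $\liminf\lambda_p(M_{\sigma_n,m,\Omega}+a)\ge0$, contradicting $\lambda_p(M_{\sigma_n,m,\Omega}+a)\to\lambda_p(M_{\sigma_0,m,\Omega}+a)<0$ from Theorem \ref{le301} and Lemma \ref{th207}. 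Finally, note that invoking the uniqueness clause of Lemma \ref{th207} to identify $\theta$ with $\theta_{\sigma_0}$ requires $\theta>0$ on $\Omega$, not merely $\theta\not\equiv0$; this follows from a short propagation argument using $J(0)>0$ and the quadratic formula, but it should be said.
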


\begin{proof}
Define
$
G(\sigma ,\theta ):=M_{\sigma ,m,\Omega }[\theta ]+\theta (a(x)-\theta ).
$
For any given $\sigma _{0}>0$, there exists $\theta _{\sigma _{0}}\in C(\bar{%
\Omega})$ such that
$
G(\sigma _{0},\theta _{\sigma _{0}})=M_{\sigma _{0},m,\Omega }[\theta
_{\sigma _{0}}]+\theta _{\sigma _{0}}(a(x)-\theta _{\sigma _{0}}).
$
Note that for any $v\in C(\bar{\Omega})$
\begin{equation*}
G_{\theta }(\sigma _{0},\theta _{\sigma _{0}})v=M_{\sigma _{0},m,\Omega
}[v]+(a(x)-2\theta _{\sigma _{0}})v.
\end{equation*}%
Define
$
L_{1}[v](x):=M_{\sigma _{0},m,\Omega }[v]+(a(x)-\theta _{\sigma _{0}})v \text{ and }
L_{2}[v](x):=M_{\sigma _{0},m,\Omega }[v]+(a(x)-2\theta _{\sigma _{0}})v.
$
We know that 0 is the principal eigenvalue of $L_{1}$ with eigenfunction $%
\theta _{\sigma _{0}}>0$. Thus, it follows from \cite[lemma 2.2]%
{Bates-2007-JMAA} that
\begin{equation*}
\sup \{Re\lambda |\lambda \in \sigma (L_{1})\}=0,
\end{equation*}%
where $\sigma (L_{1})$ is the spectrum of $L_{1}$. By the definition of $%
L_{1}$ and $L_{2}$, we have
\begin{equation*}
\sup \{Re\lambda |\lambda \in \sigma (L_{2})\}<\sup \{Re\lambda |\lambda \in
\sigma (L_{1})\}=0.
\end{equation*}%
This implies that $0\in \rho (L_{2})$, where $\rho (L_{2})=\mathbb{C}\backslash \sigma (L_{2})$ is the resolvent
set of $L_{2}$. By the definition of $\rho (L_{2})$, we have $G_{\theta
}^{-1}(\sigma _{0},\theta _{\sigma _{0}})$ is a linear continuous operator
in $C(\bar{\Omega})$. Then by the Implicit Function Theorem, $\theta
_{\sigma }$ is a continuous function with respect to $\sigma $.
\end{proof}


\noindent

Now, we analyze the situation that $\sigma$ is large enough.
\begin{proof}[Proof of Theorem \ref{th103}]
It follows from Theorem \ref{th101} that we get $ \underset{\sigma\rightarrow \infty}{\lim}\lambda_{p}(M_{\sigma,m,\Omega}+a)=
-\underset{\Omega}{\sup}\ a
$
and $\lambda_{p}(M_{\sigma,m,\Omega}+a)$ is continuous with respect to $%
\sigma$. As a result, there exists $\sigma_{1}>0$ such that for all $%
\sigma\geq \sigma_{1}$, we obtain
\begin{align*}
\lambda_{p}(M_{\sigma,m,\Omega}+a)\leq -\frac{1}{2}\underset{\Omega}{\sup} \
a<0.
\end{align*}
By Lemma \ref{th207}, for all $\sigma\geq \sigma_{1}$, there exists a
positive solution $\theta_{\sigma}$ to \eqref{401}.

Owing to Lemma \ref{le401} ($ii$) and Lemma \ref{le402}, for all $x\in \Omega$
and large $\sigma$, we have
\begin{align*}
\bigg(a-\frac{1}{\sigma^{m}}\int_{\Omega}J_{\sigma}(x-y)dy\bigg)^{+} \leq
\theta_{\sigma}\leq a^{+}+1/\sigma^{\frac{N}{4}}.
\end{align*}
Since
\begin{align*}
\int_{\Omega}J_{\sigma}(x-y)dy=\int_{\frac{\Omega-x}{\sigma}}J(z)dz
\rightarrow 0 \ \ \ \ \text{as}\ \sigma\rightarrow \infty,
\end{align*}
$\theta_{\sigma}$ converges uniformly to $a^{+}$.
\end{proof}

\subsection{Small dispersal spread}

\noindent

Firstly, we consider the situation $0\leq m<2$. Although the proof of Theorem \ref{th104} (i) is similar to \cite[Theorem 1.2]{Berestycki-2016-JMB}, the details are a little different.
For the convenience of the reader, we will give the proof.
\begin{proof}[Proof of Theorem \ref{th104} (i)]
It follows from Theorem \ref{th101} that we know
\begin{align*}
\underset{\sigma\rightarrow 0^{+}}{\lim}\lambda_{p}(M_{\sigma,m,\Omega}+a)= -%
\underset{\Omega}{\sup}\ a
\end{align*}
and $\lambda_{p}(M_{\sigma,m,\Omega}+a)$ is continuous with respect to $%
\sigma$. As a consequence, there exists $\sigma_{0}>0$ such that for all $%
\sigma\leq \sigma_{0}$, we have
\begin{align*}
\lambda_{p}(M_{\sigma,m,\Omega}+a)\leq -\frac{1}{2}\underset{\Omega}{\sup} \
a<0.
\end{align*}
By Lemma \ref{th207}, for all $\sigma\leq \sigma_{0}$, there exists a
positive solution $\theta_{\sigma}$ to \eqref{401}.

In what follows, let us determine the limit of $\theta _{\sigma }$ as $%
\sigma \rightarrow 0^{+}$. Let $w_{\sigma }:=a-\theta _{\sigma }$, then from %
\eqref{401}, $w_{\sigma }$ satisfies
\begin{equation*}
-\frac{1}{\sigma ^{m}}\int_{\Omega }J_{\sigma }(x-y)(w_{\sigma
}(y)-w_{\sigma }(x))dy+\theta _{\sigma }w_{\sigma }=-\frac{1}{\sigma ^{m}}%
\int_{\Omega }J_{\sigma }(x-y)(a(y)-a(x))dy\ \ \ \ \text{in}\ \Omega .
\end{equation*}%
Multiplying this equation by $w_{\sigma }^{+}$ with $w_{\sigma }^{+}:=\sup
\{w_{\sigma },0\}$ and integrating over $\Omega $, it follows that
\begin{align}
&\frac{1}{\sigma ^{m}}\int_{\Omega }\int_{\Omega }J_{\sigma
}(x-y)[(w_{\sigma }^{+})^{2}(x)-w_{\sigma }(y)w_{\sigma
}^{+}(x)]dxdy+\int_{\Omega }\theta _{\sigma }(x)(w_{\sigma }^{+})^{2}(x)dx
\notag \\
=&\int_{\Omega }w_{\sigma }^{+}(x)g_{\sigma }(x)dx,  \label{402}
\end{align}%
with $g_{\sigma }(x):=-\frac{1}{\sigma ^{m}}\int_{\Omega }J_{\sigma
}(x-y)(a(y)-a(x))dy$.

Let us now estimate the above integrals. Indeed, since $w_{\sigma
}(y)=w_{\sigma }^{+}(y)-w_{\sigma }^{-}(y)$ with $w_{\sigma }^{-}(y)=\sup
\{-w_{\sigma },0\}$, we get%
\begin{align}
& \frac{1}{\sigma ^{m}}\int_{\Omega }\int_{\Omega }J_{\sigma }(x-y)\big[%
(w_{\sigma }^{+})^{2}(x)-w_{\sigma }(y)w_{\sigma }^{+}(x)\big]dxdy  \notag \\
=& \frac{1}{\sigma ^{m}}\int_{\Omega }\int_{\Omega }J_{\sigma }(x-y)\big[%
(w_{\sigma }^{+})^{2}(x)-w_{\sigma }^{+}(y)w_{\sigma }^{+}(x)\big]dxdy
\notag \\
& +\frac{1}{\sigma ^{m}}\int_{\Omega }\int_{\Omega }J_{\sigma
}(x-y)w_{\sigma }^{+}(x)w_{\sigma }^{-}(y)dxdy  \notag \\
=& \frac{1}{2\sigma ^{m}}\int_{\Omega }\int_{\Omega }J_{\sigma }(x-y)\big[%
w_{\sigma }^{+}(x)-w_{\sigma }^{+}(y)\big]^{2}dxdy+\frac{1}{\sigma ^{m}}%
\int_{\Omega }\int_{\Omega }J_{\sigma }(x-y)w_{\sigma }^{+}(x)w_{\sigma
}^{-}(y)dxdy.  \label{403}
\end{align}%
By Lemma \ref{le401} (i), we get%
\begin{equation*}
\bigg|\int_{\Omega }w_{\sigma }^{+}(x)g_{\sigma }(x)dx\bigg|\leq
2C_{1}\int_{\Omega }|g_{\sigma }(x)|dx.
\end{equation*}%
Let us estimate $g_{\sigma }(x)$ for all $x\in \Omega ,$%
\begin{align*}
g_{\sigma }(x) =&-\frac{1}{\sigma ^{m}}\int_{\Omega }J_{\sigma
}(x-y)(a(y)-a(x))dy \\
=&-\frac{1}{\sigma ^{m}}\int_{\frac{\Omega -x}{\sigma }}J(z)(a(x+\sigma
z)-a(x))dz.
\end{align*}%
For enough small $\sigma $, 
we obtain $supp(J)=B_{1}(0)\subset \frac{\Omega -x}{\sigma }$. Since $a\in
C^{2}(\overline{\Omega })$, a Taylor expansion leads to%
\begin{align*}
g_{\sigma }(x) =&-\frac{1}{\sigma ^{m}}\int_{\mathbb{R}^{N}}J(z)(a(x+\sigma z)-a(x))dz\\
=&-\frac{1}{\sigma ^{m}}\int_{B_{1}(0)}J(z)(a(x+\sigma z)-a(x))dz \\
=&-\frac{1}{\sigma ^{m}}\int_{B_{1}(0)}J(z)\Big(\nabla a(x)\cdot \sigma z+%
\frac{\sigma ^{2}}{2}z^{T}D^{2}a(x)z+o(\sigma ^{2})\Big)dz.
\end{align*}%
Thus, for all $x\in \Omega $, we obtain%
\begin{equation*}
|g_{\sigma }(x)|\leq \frac{D_{2}(J)}{2N}||\triangle a||_{L^{\infty }(\Omega
)}\sigma ^{2-m}+o(\sigma ^{2-m})
\end{equation*}%
and%
\begin{equation}
\Bigg|\int_{\Omega }w_{\sigma }^{+}(x)g_{\sigma }(x)dx\Bigg|\leq 2C_{1}|\Omega
|\bigg(\frac{D_{2}(J)}{2N}||\triangle a||_{L^{\infty }(\Omega )}+o(1)\bigg)\sigma ^{2-m}.
\label{404}
\end{equation}%
Collecting (\ref{402}), (\ref{403}) and (\ref{404}), we have%
\begin{align*}
&\frac{1}{2\sigma ^{m}}\int_{\Omega }\int_{\Omega }J_{\sigma }(x-y)\big[%
w_{\sigma }^{+}(x)-w_{\sigma }^{+}(y)\big]^{2}dxdy+\frac{1}{\sigma ^{m}}%
\int_{\Omega }\int_{\Omega }J_{\sigma }(x-y)w_{\sigma }^{+}(x)w_{\sigma
}^{-}(y)dxdy \\
&+\int_{\Omega }\theta _{\sigma }(x)(w_{\sigma }^{+})^{2}(x)dx \leq C\sigma ^{2-m}.
\end{align*}%
Thus%
\begin{equation*}
\int_{\Omega }\theta _{\sigma }(x)(w_{\sigma }^{+})^{2}(x)dx\leq C\sigma
^{2-m}
\end{equation*}%
and%
\begin{equation}
\theta _{\sigma }(x)w_{\sigma }^{+}(x)\rightarrow 0\text{ \ \ a.e. in }\Omega \ \ \text{ as }\sigma \rightarrow 0^{+}.  \label{405}
\end{equation}%

Let us denote $Q:=supp(w_{\sigma }^{+})$, integrating (\ref{401}) over  $%
\Omega $ yields
\begin{equation*}
\int_{\Omega }\theta _{\sigma }(a(x)-\theta _{\sigma })dx=0.
\end{equation*}%
We conclude from (\ref{405}) that
\begin{align*}
\int_{\Omega \backslash Q}\theta _{\sigma }(a(x)-\theta _{\sigma })dx
=-\int_{Q}\theta _{\sigma }(a(x)-\theta _{\sigma })dx
=-\int_{Q}\theta _{\sigma }w_{\sigma }dx\rightarrow 0\text{ \ as }\sigma
\rightarrow 0^{+}.
\end{align*}%
Since $\theta _{\sigma }(a(x)-\theta _{\sigma })\leq 0$ in $\Omega
\backslash Q$, it follows that%
\begin{equation} \label{406}
\theta _{\sigma }(x)w_{\sigma }(x)\rightarrow 0\text{ \ \ a.e. in }\Omega \backslash Q \ \ \text{ as }\sigma \rightarrow 0^{+}.
\end{equation}%
Collecting (\ref{405}) and (\ref{406}), we have
$
\theta _{\sigma }\rightarrow V_{1} \text{a.e. in }\Omega \text{
as }\sigma \rightarrow 0^{+},
$
where $V_{1}$ is a nonnegative bounded solution of $V_{1}\big(a(x)-V_{1}\big)=0~ \text{ in }\Omega.
$
This ends the proof.
\end{proof}

Next, we discuss the case $m\geq2$. The proof of Theorem \ref{th104} (ii) is similar to \cite[Theorem 1.4]{Berestycki-2016-JMB}. Here, we omit it. Therefore, to complete the proof of Theorem \ref{th104}, we only need to consider the case $m>2$.

\begin{proof}[Proof of Theorem \ref{th104} (iii)]
Assume for the moment that $\bar{a}>0$. By Theorem \ref{th102}, there exists $\sigma _{0}>0$ such that for
all $\sigma \leq \sigma _{0}$, we have
\begin{equation*}
\lambda _{p}(M_{\sigma ,m,\Omega })\leq -\frac{1}{2}\bar{a}<0.
\end{equation*}%
Thanks to Lemma \ref{th207}, for all $\sigma \leq \sigma _{0}$, there is a positive solution $\theta _{\sigma }$ of \eqref{401}.

For every $\sigma\leq \sigma_{0}$,
$\theta _{\sigma }$ satisfies
\begin{align}  \label{***}
M_{\sigma,m,\Omega}[\theta _{\sigma }](x)+\theta _{\sigma } (a(x)-\theta _{\sigma })=0\ \ \ \ \text{for all}\ x\in \Omega.
\end{align}
Set $\{\sigma _{n}\}_{n\in\mathbb{N}}$ with $\sigma _{n}\leq \sigma$ be any sequence of positive reals converging to $0$. We write $\theta _{n}$ instead of $\theta _{\sigma _{n}}$.
Multiplying \eqref{***} by $\theta _{n}$ and integrating over $\Omega$ yield
\begin{equation*}
\frac{1}{2\sigma _{n}^{m}}\int_{\Omega }\int_{\Omega }J_{\sigma
_{n}}(x-y)(\theta _{n}(y)-\theta _{n}(x))^{2}dydx=\int_{\Omega }\theta
_{n}^{2}(x)(a(x)-\theta _{n}(x))dx.
\end{equation*}%
Let $\rho _{\sigma _{n}}(x-y):=\frac{1}{\sigma _{n}^{2}D_{2}(J)}J_{\sigma
_{n}}(x-y)|x-y|^{2}$, we get%
\begin{equation*}
\frac{D_{2}(J)}{2}\int_{\Omega }\int_{\Omega }\rho _{\sigma _{n}}(x-y)\frac{%
(\theta _{n}(y)-\theta _{n}(x))^{2}}{|x-y|^{2}}dxdy=\sigma _{n}^{m-2}\int_{\Omega }\theta
_{n}^{2}(x)(a(x)-\theta _{n}(x))dx.
\end{equation*}%
By Lemma \ref{le401}($i$), it follows that%
\begin{equation*}
\frac{D_{2}(J)}{2}\int_{\Omega }\int_{\Omega }\rho _{n}(x-y)\frac{(\theta
_{n}(y)-\theta _{n}(x))^{2}}{|x-y|^{2}}dxdy\leq C\sigma^{m-2}
\end{equation*}%
with $C$ independent of $n.$

Since $||\theta _{n}||_{L^{2}(\Omega )}$ is uniformly bounded by Lemma \ref{le401}($i$), there exists a subsequence $\theta _{n}\rightharpoonup \psi$ in $L^{2}(\Omega )\ ($for convenience, we still denote $\{\theta _{n}\}_{n\in\mathbb{N}})$. Thanks to \cite[Theorems 1.2 and 1.3]{Ponce-2004-JEMS}, along a sequence, we have
\begin{equation}\label{****}
\theta _{n}\rightarrow \psi \ \ \text{ in }L^{2}(\Omega ) \ \ and \ \ \psi\in W^{1,2}.
\end{equation}%
Moreover,
\begin{equation*}
\int_{\Omega}|\nabla\psi(x)|^{2}dx\leq \frac{C\sigma^{m-2}}{N}.
\end{equation*}%

Since the above inequality holds for every $\sigma$, there holds
\begin{equation*}
\int_{\Omega}|\nabla\psi(x)|^{2}dx\leq 0,
\end{equation*}%
which implies that
\begin{equation}\label{*****}
\theta _{n}\rightarrow K \ \ \text{ in }L^{2}(\Omega )
\end{equation}%
with the constant $K$ and $\psi\equiv K$.

To conclude we need to prove that $K$ is non-zero.
Now, we give a brief explanation.
It follows from Theorem \ref{th102} (iii) that there exists a positive principal
eigenfunction $\varphi _{p,\sigma }$ associated with $\lambda _{p}(M_{\sigma
,m,\Omega }+a)$ normalised by $||\varphi _{p,\sigma }||_{L^{2}(\Omega )}=1$.
Thus, we obtain%
\begin{equation*}
M_{\sigma ,m,\Omega }[\varphi _{p,\sigma }]+(a(x)+\lambda _{p}(M_{\sigma
,m,\Omega }+a))\varphi _{p,\sigma }=0\text{ \ \ \ in }\Omega ,
\end{equation*}%
which implies that%
\begin{equation}
M_{\sigma ,2,\Omega }[\varphi _{p,\sigma }]+a(x)\varphi _{p,\sigma }\geq
\frac{1}{2}\bar{a}\varphi
_{p,\sigma }.  \label{411}
\end{equation}%
By Theorem \ref{th102}(iii), there holds%
\begin{equation}
\varphi _{p,\sigma }\rightarrow |\Omega|^{-\frac{1}{2}}\text{ \ \ \ \ a.e. in }\Omega~~~\text{as}~~~\sigma\to0^+,
\label{412}
\end{equation}%
Then there exists $\alpha >0$ such that
$
\alpha \varphi _{p,\sigma }\rightarrow \alpha |\Omega|^{-\frac{1}{2}}<1/2\text{ a.e. in }\Omega~\text{as}~\sigma\to0^+ .
$
Plugging $\alpha \beta \varphi _{p,\sigma }$ in (\ref{401}), it follows from
(\ref{411}) that%
\begin{align*}
M_{\sigma ,m,\Omega }[\alpha \beta \varphi _{p,\sigma }]+\alpha \beta
\varphi _{p,\sigma }(a(x)-\alpha \beta \varphi _{p,\sigma })
=&\alpha \beta \big(M_{\sigma ,m,\Omega }[\varphi _{p,\sigma }]+a(x)\varphi
_{p,\sigma }\big)-\alpha ^{2}\beta ^{2}\varphi _{p,\sigma }^{2} \\
\geq &\frac{\bar{a}}{2}\alpha \beta \varphi _{p,\sigma }-\alpha ^{2}\beta ^{2}\varphi
_{p,\sigma }^{2} =\alpha \beta \varphi _{p,\sigma }\Big(\frac{\bar{a}}{2}-\alpha \beta \varphi _{p,\sigma }\Big) \\
>&\alpha \beta \varphi _{p,\sigma }\Big(\frac{\bar{a}}{2}-\beta \Big).
\end{align*}%
Let $\beta =\frac{\bar{a}}{4}$,
the function $\frac{\alpha\bar{a} }{4}\varphi _{p,\sigma }$ is a sub-solution to (\ref{401}), i.e., for some $%
\sigma _{1}>0,$%
\begin{equation*}
\theta _{\sigma }\geq \frac{\alpha\bar{a} }{4}\varphi _{p,\sigma }\text{ \ \ a.e. in }\Omega \text{, for all
}\sigma \leq \sigma _{1},
\end{equation*}%
which combined with (\ref{412}) enforces for some $\gamma ,\sigma _{2}>0,$%
\begin{equation*}
\theta _{\sigma }\geq \gamma |\Omega|^{-\frac{1}{2}}\text{ \ \ a.e. in }\Omega \text{,
for all }\sigma \leq \sigma _{2}.
\end{equation*}%
Thus, we have $K>0$.

On the other hand, integrating \eqref{***} over $\Omega$ yields
$
\int_{\Omega}\theta_{n}(a(x)-\theta_{n})dx=0.
$
As $n\rightarrow +\infty$, there holds
$
\int_{\Omega}K(a(x)-K)dx=0,
$
which implies that
\begin{equation*}
K=\frac{1}{|\Omega|}\int_{\Omega}a(x)dx.
\end{equation*}
As a consequence, we have
$
\underset{\sigma\rightarrow 0^{+}}{\lim}\|\theta_{\sigma}-\bar{a}\|_{L^{2}(\Omega)}=0
$
and \eqref{*****} holds for the entire sequence.

\end{proof}

\section{Asymptotic properties of solution $u_{\sigma}(x,t)$}
\noindent

This section is devoted to studying the following nonlocal dispersal evolution equation
\begin{equation}  \label{800}
\begin{cases}
u_{t}(x,t)=\frac{1}{\sigma^{m}}\int_{\Omega}J_{\sigma}(x-y)(u(y,t)-u(x,t))
dy+u(a(x)-u),\ \  &(x,t)\in \bar{\Omega}\times(0,T],\\
u(x,0)=u_{0}(x),\ \ &x\in \bar{\Omega}.
\end{cases}
\end{equation}
More precisely, we are concerned about asymptotic of solution $u_{\sigma}(x,t)$ with respect to the dispersal spread $\sigma$.
Here, we recall the existence and uniqueness of solution
$u_{\sigma}(x,t)$ to \eqref{800}.
Furthermore, we analyze the asymptotic behaviors of solution $u_{\sigma}(x,t)$ and give the convergence rate as $\sigma$ tends to zero or $\infty$.

\subsection{Large dispersal spread}
\noindent

We first recall the definition of sub-solution (super-solution) to \eqref{800} and give a known result, see \cite{Bates-2007-JMAA,Rossi-2007-JDE}.
\begin{definition}\label{de401}
Let $X=C(\bar{\Omega})$, $S_{T}=\bar{\Omega}\times(0,T)$ for $0<T\leq \infty$. We say
$u\in C^{1}([0,T), X)$ is a sub-solution (super-solution) of \eqref{800} in $S_{T}$ if
\begin{equation*}
u_{t}\leq (\geq)~\frac{1}{\sigma^{m}}\int_{\Omega}J_{\sigma}(x-y)(u(y,t)-u(x,t))
dy+u(a(x)-u).
\end{equation*}
\end{definition}
\begin{proposition}\label{pr401}
Assume that $J$ satisfies $(J)$ and $a\in C(\bar{\Omega})$. Then \eqref{800} has a global solution $u(x,\cdot;u_{0})$ for some continuous and non-negative initial function $u_{0}$.
\end{proposition}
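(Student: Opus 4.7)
The plan is to reformulate \eqref{800} as an abstract Cauchy problem in the Banach space $X=C(\bar{\Omega})$ of the form $u'(t)=F(u(t))$, $u(0)=u_{0}$, where $F:X\to X$ is defined by
\begin{equation*}
F(\varphi)(x):=M_{\sigma,m,\Omega}[\varphi](x)+\varphi(x)(a(x)-\varphi(x)).
\end{equation*}
The linear part $M_{\sigma,m,\Omega}$ is a bounded operator on $X$ (with operator norm at most $2/\sigma^{m}$, since $\int_{\Omega}J_{\sigma}(x-y)dy\leq 1$), and since $a\in C(\bar{\Omega})$, the nonlinearity $\varphi\mapsto\varphi(a-\varphi)$ is locally Lipschitz on $X$. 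Hence $F$ is locally Lipschitz, and the Picard--Lindel\"of theorem in Banach spaces yields a unique maximal mild solution $u_{\sigma}\in C^{1}([0,T_{\max}),X)$.

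Next I would establish non-negativity of $u_{\sigma}$. The usual obstacle is that nonlocal operators do not satisfy a pointwise maximum principle in the classical sense, so one instead exploits quasi-monotonicity. Fixing a constant $c>0$ with $c>\|a\|_{L^{\infty}(\Omega)}+\sigma^{-m}$ and rewriting the equation as
\begin{equation*}
u_{t}+cu=\tfrac{1}{\sigma^{m}}\int_{\Omega}J_{\sigma}(x-y)u(y,t)dy+u(a(x)+c-\sigma^{-m}p_{\sigma}(x)-u),
\end{equation*}
the right-hand side becomes a monotone (in $u$) function of a positive linear integral term. The variation-of-constants formula then shows that the Picard iterates starting from $u_{0}\geq 0$ remain non-negative, so $u_{\sigma}(\cdot,t)\geq 0$ on $[0,T_{\max})$.

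I would then derive a uniform $L^{\infty}$ bound through a super-solution. Setting $K:=\max\{\|u_{0}\|_{L^{\infty}(\Omega)},\|a^{+}\|_{L^{\infty}(\Omega)}\}$, the constant $\bar{u}\equiv K$ satisfies $M_{\sigma,m,\Omega}[K]+K(a(x)-K)=K(a(x)-K)\leq 0$, hence it is a super-solution in the sense of Definition \ref{de401}. Applying the same quasi-monotone comparison argument to $w:=\bar{u}-u_{\sigma}$ (or, equivalently, a Gronwall estimate on $\|(u_{\sigma}-K)^{+}\|_{L^{\infty}(\Omega)}$), one concludes $u_{\sigma}(\cdot,t)\leq K$ on $[0,T_{\max})$. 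Together with non-negativity, this gives $\|u_{\sigma}(\cdot,t)\|_{L^{\infty}(\Omega)}\leq K$ uniformly in $t$, which rules out finite-time blow-up and forces $T_{\max}=\infty$ by the standard continuation principle for ODEs in Banach spaces.

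The main obstacle is the comparison/positivity step, because the nonlocal operator has no pointwise maximum principle. Everything else is routine Banach-space ODE theory, so the heart of the proof is the quasi-monotone reformulation outlined above; once that is in place, local well-posedness, non-negativity, and the a priori $L^{\infty}$ bound combine to yield the global solution. The existence of such results in \cite{Bates-2007-JMAA,Rossi-2007-JDE} suggests one may simply cite their construction, adapting the (essentially identical) argument to the Neumann setting and the scaled kernel $\sigma^{-m}J_{\sigma}$.
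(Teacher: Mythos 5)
The paper does not actually prove this proposition; it is stated as a known result and deferred to \cite{Bates-2007-JMAA,Rossi-2007-JDE}. Your outline reconstructs precisely the argument those references use: view \eqref{800} as an ODE in $C(\bar{\Omega})$ driven by a locally Lipschitz vector field (a bounded nonlocal linear part plus a quadratic nonlinearity), obtain local well-posedness from Picard--Lindel\"of, prove non-negativity and an a priori $L^{\infty}$ bound by a quasi-monotone comparison with the constant super-solution $K=\max\{\|u_{0}\|_{L^{\infty}},\|a^{+}\|_{L^{\infty}}\}$, and conclude global existence by continuation. So the route is the same one the paper points to.

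One small bookkeeping issue in your positivity step: the shift $c>\|a\|_{L^{\infty}(\Omega)}+\sigma^{-m}$ is not large enough as written. For the Picard iterates to stay non-negative you need $a(x)+c-\sigma^{-m}p_{\sigma}(x)-u\geq 0$ along the iteration, which forces $c$ to also dominate the size of $u$ itself, i.e. $c$ should exceed roughly $K+\|a^{-}\|_{L^{\infty}(\Omega)}+\sigma^{-m}$. The cleanest fix is to run non-negativity and the upper bound together as a single invariance argument showing that the order interval $\{0\leq u\leq K\}$ is preserved by the iteration, with $c$ chosen in terms of $K$; this simultaneously gives the a priori bound you use for global continuation. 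This is a fixable constant, not a conceptual gap, and the rest of the outline is correct.
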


Next, an ordinary differential equation result is given. Since it is easy checked, we omit its proof.

\begin{proposition}\label{pr402}
Assume that $a\in C(\bar{\Omega})$ and $u_{0}\in C(\bar{\Omega})$ with
$u_{0}\geq 0$. Then, for each $x\in \Omega$, the equation
\begin{equation}\label{4111}
\begin{cases}
v_{t}(x,t)=v(a(x)-v) \ \ &(x,t)\in \bar{\Omega}\times(0,\infty),\\
v_{t}(x,0)=u_{0}(x)  \ \ &x\in \bar{\Omega},
\end{cases}
\end{equation}
has a unique solution $v(\cdot,t;u_{0})$ that is continuous in $x$.
Moreover, if $a, u_{0}\in C^{2}(\bar{\Omega})$, then $v(\cdot,t;u_{0})\in C^{2}(\bar{\Omega})$ for every $t\geq0$.
\end{proposition}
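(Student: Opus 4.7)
The plan is to view \eqref{4111} as a one-parameter family of scalar logistic ODEs indexed by $x\in\bar\Omega$, and to extract all three conclusions (existence/uniqueness, continuity in $x$, and $C^{2}$-regularity in $x$) from the well-known explicit solution together with standard smooth-dependence-on-parameters results.

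First I would fix $x\in\bar\Omega$ and treat $x$ as a parameter. The right-hand side $F(x,v):=v(a(x)-v)$ is polynomial in $v$, hence locally Lipschitz, so Picard--Lindel\"of produces a unique maximal solution $v(x,\cdot;u_0)$ with $v(x,0;u_0)=u_0(x)\ge 0$. Global existence on $[0,\infty)$ follows from a simple \emph{a priori} bound: the constants $0$ and $M(x):=\max\{u_0(x),a^{+}(x)\}$ are respectively a sub- and super-solution of the scalar ODE, so by the comparison principle
\[
0\le v(x,t;u_0)\le M(x)\qquad\text{for all } t\ge 0,
\]
which rules out blow-up and pins the solution in a compact $v$-interval.

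Next I would make things quantitative by writing down the explicit solution obtained from separation of variables. Setting $w=1/v$ on $\{u_0(x)>0\}$ gives a linear ODE $w_t+a(x)w=1$, whence
\[
v(x,t;u_0)=
\begin{cases}
\dfrac{a(x)u_0(x)e^{a(x)t}}{a(x)+u_0(x)\bigl(e^{a(x)t}-1\bigr)}, & a(x)\neq 0,\\[1.5ex]
\dfrac{u_0(x)}{1+u_0(x)t}, & a(x)=0,
\end{cases}
\]
with the convention $v\equiv 0$ if $u_0(x)=0$. The two branches match continuously as $a(x)\to 0$ (the singular pieces cancel), so if I rewrite the formula in the unified form
\[
v(x,t;u_0)=\frac{u_0(x)e^{a(x)t}}{1+u_0(x)\int_{0}^{t}e^{a(x)s}\,ds},
\]
I obtain a single expression that is jointly continuous in $(x,t)$ whenever $a,u_0\in C(\bar\Omega)$, proving continuity of $v(\cdot,t;u_0)$ in $x$ for each fixed $t\ge 0$.

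Finally, for the regularity statement I would note that the map $x\mapsto(a(x),u_0(x))$ is $C^{2}$ on $\bar\Omega$ by assumption and enters the unified formula only through smooth elementary operations (products, the exponential, and a strictly positive denominator bounded below on the compact set $\bar\Omega\times[0,t]$). Differentiating under the integral/formula twice in $x$ and applying the chain rule shows $v(\cdot,t;u_0)\in C^{2}(\bar\Omega)$ for every $t\ge 0$. (Alternatively, one can invoke the smooth-dependence-on-parameters theorem for ODEs applied to the parametrised vector field $F(x,v)$, which is $C^{2}$ in $x$ and $C^{\infty}$ in $v$.) There is no real obstacle here; the only mildly delicate point is handling the branch $a(x)=0$, which is why I would prefer the unified integral form of the solution over the two-case formula so that no case split is needed when differentiating in $x$.
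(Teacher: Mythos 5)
Your proposal is correct, and it is the natural way to verify a claim the paper itself leaves unproved (the paper states only ``Since it is easy checked, we omit its proof''). Two small remarks. First, the paper's statement contains an evident typo: the initial condition should read $v(x,0)=u_{0}(x)$ rather than $v_{t}(x,0)=u_{0}(x)$; you have silently, and correctly, interpreted it that way. Second, the heart of the matter is exactly the unified closed form
\[
v(x,t;u_{0})=\frac{u_{0}(x)\,e^{a(x)t}}{1+u_{0}(x)\int_{0}^{t}e^{a(x)s}\,ds},
\]
whose denominator is $\geq 1$ for $u_{0}\geq 0$, and the fact that $\int_{0}^{t}e^{a s}\,ds$ is an entire function of $a$ (equal to $\sum_{k\geq 0}a^{k}t^{k+1}/(k+1)!$), so no case split on $a(x)=0$ is needed and the map $(a,u_{0},t)\mapsto v$ is $C^{\infty}$. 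Composing with $x\mapsto (a(x),u_{0}(x))$ then delivers both the continuity claim under $a,u_{0}\in C(\bar\Omega)$ and the $C^{2}$ claim under $a,u_{0}\in C^{2}(\bar\Omega)$; the parallel argument via smooth dependence on parameters is equally valid. Your a priori bound $0\leq v\leq\max\{u_{0}(x),a^{+}(x)\}$ is also correct and suffices for global existence, though with the explicit formula in hand it is no longer strictly needed.
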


Now, let us prove the main result.
\begin{proof}[Proof of Theorem \ref{th106}]
Let $w_{\sigma}=u_{\sigma}-v$ with $v$ a solution of \eqref{4111}, then there holds
\begin{align}\label{801}
\begin{cases}
(w_{\sigma})_{t}(x,t)= M_{\sigma,m,\Omega}[w_{\sigma}](x,t)+a_{\sigma}(x,t)
w_{\sigma}+F_{\sigma}(x,t)\ \ \ &\text{in } \bar{\Omega}\times(0,T],\\
w_{\sigma}(x,0)=0 &\text{on } \bar{\Omega},
\end{cases}
\end{align}
where $a_{\sigma}(x,t)=a(x)+u_{\sigma}+v$, $F_{\sigma}(x,t)=M_{\sigma,m,\Omega}[v](x,t)$.

Take $K=\max\{\sup_{\Omega}a, \sup_{\Omega}u_{0}\}$. By comparison principle, it is easy to verify that
\begin{equation*}
\sup\limits_{t\in[0,T]}\|u_{\sigma}(\cdot,t)\|_{L^{\infty}(\Omega)}\leq K, \ \ \sup\limits_{t\in[0,T]}\|v(\cdot,t)\|_{L^{\infty}(\Omega)}\leq K.
\end{equation*}
Thus, there hold
\begin{equation*}
\sup\limits_{t\in[0,T]}\|a_{\sigma}(\cdot,t)\|_{L^{\infty}(\Omega)}\leq 3K
\end{equation*}
and
\begin{align}\label{802}
\sup\limits_{t\in[0,T]}\|F_{\sigma}(\cdot,t)\|_{L^{\infty}(\Omega)}\leq \frac{2K}{\sigma^{m}}\sup\limits_{x\in\Omega}\bigg|\int_{\Omega}J_{\sigma}
(x-y)dy\bigg|=\frac{2K}{\sigma^{m}}\sup\limits_{x\in\Omega}\bigg|\int_{\frac{\Omega-x}
{\sigma}}J(z)dz\bigg|\leq O(\sigma^{-(m+N)}).
\end{align}

Next, let $\bar{w}$ be given by
\begin{equation*}
\bar{w}(x,t)=e^{3Kt}(K_{1}\sigma^{-(m+N)}t).
\end{equation*}
By direct calculation, there holds
\begin{align}\label{803}
\begin{cases}
(\bar{w})_{t}(x,t)= M_{\sigma,m,\Omega}[\bar{w}](x,t)+a_{\sigma}
\bar{w}+\bar{F}_{\sigma}(x,t)\ \ \ &\text{in } \bar{\Omega}\times(0,T],\\
\bar{w}(x,0)=0 &\text{on }\bar{\Omega},
\end{cases}
\end{align}
where $\bar{F}_{\sigma}(x,t)=e^{3Kt}K_{1}\sigma^{-(m+N)}+
(3K-a_{\sigma}(x,t))\bar{w}$.
By \eqref{802}, there are $\sigma_{2}>0$ and $K_{1}>0$ such that
\begin{equation}\label{804}
\bar{F}_{\sigma}(x,t)>F_{\sigma}(x,t)\ \ \text{for all }\sigma\geq\sigma_{2}.
\end{equation}

Due to \eqref{801}, \eqref{803}, \eqref{804} and comparison principle,
we obtain
\begin{equation}\label{805}
w_{\sigma}(x,t)\leq\bar{w}_{\sigma}(x,t)\ \ \ \forall(x,t)\in \bar{\Omega}\times[0,T],
\end{equation}
for all $\sigma\geq\sigma_{2}$.

Similarly, let $\underline{w}(x,t)=-e^{3Kt}(K_{1}\sigma^{-(m+N)}t)$. We can prove that there exists $\sigma_{3}>0$ such that
\begin{equation}\label{806}
w_{\sigma}(x,t)\geq\underline{w}_{\sigma}(x,t)\ \ \ \forall(x,t)\in \bar{\Omega}\times[0,T],
\end{equation}
for all $\sigma\geq\sigma_{3}$.

By \eqref{805} and \eqref{806}, we obtain
\begin{equation}
|w_{\sigma}(x,t)|\leq e^{3Kt}(K_{1}\sigma^{-(m+N)}t)\ \ \ \forall(x,t)\in \bar{\Omega}\times[0,T],
\end{equation}
which implies that there is $C(T)>0$ such that
\begin{equation*}
\sup\limits_{t\in[0,T]}\|u_{\sigma}(\cdot,t)-v(\cdot,t)\|_{L^{\infty}
(\Omega)}\leq C(T)\sigma^{-(m+N)}
\end{equation*}
for all $\sigma\geq \sigma_{1}:=\max\{\sigma_{2},\sigma_{3}\}$.
The proof is complete.
\end{proof}

\subsection{Small dispersal spread}
\noindent

Finally, let us discuss the asymptotic behavior of solution $u_{\sigma}$ for small dispersal spread.
\begin{proof}[Proof of Theorem \ref{th107}]
Let $w_{\sigma}=u_{\sigma}-v$ with $v$ a solution of \eqref{4111}, then there holds
\begin{align}\label{807}
\begin{cases}
(w_{\sigma})_{t}(x,t)= M_{\sigma,m,\Omega}[w_{\sigma}](x,t)+a_{\sigma}(x,t)
w_{\sigma}+F_{\sigma}(x,t)\ \ \ &\text{in } \bar{\Omega}\times(0,T),\\
w_{\sigma}(x,0)=0 &\text{on }\bar{\Omega},
\end{cases}
\end{align}
where $a_{\sigma}(x,t)=a(x)+u_{\sigma}+v$, $F_{\sigma}(x,t)=M_{\sigma,m,\Omega}[v](x,t)$.

In fact, for $\sigma$ small enough, we have $B_{1}(0)\subset \frac{\Omega-x}{\sigma}$ and
\begin{align*}
F_{\sigma}(x,t)&=\frac{1}{\sigma^{m}}\int_{\Omega}J_{\sigma}
(x-y)(v(y,t)-v(x,t))dy
=\frac{1}{\sigma^{m}}\int_{\frac{\Omega-x}{\sigma}}J(z)(v(x+\sigma z,t)-v(x,t))dz\\
&=\frac{1}{\sigma^{m}}\int_{\mathbb{R}^N}J(z)(v(x+\sigma z,t)-v(x,t))dz
=\sigma^{2-m}\int_{\mathbb{R}^N}J(z)\bigg(\frac{z^{T}D^{2}v(x,t)z}{2}+
O(1)\bigg)dz\\
&= O(\sigma^{2-m}),\ \ \text{for all} \ x\in\Omega.
\end{align*}
Now, let $\bar{w},\underline{w}$ be given by
\begin{equation*}
\bar{w}(x,t)=e^{3Kt}(K_{1}\sigma^{2-m}t), \ \
\underline{w}(x,t)=-e^{3Kt}(K_{1}\sigma^{2-m}t)
\end{equation*}
By direct calculation, there exist $\sigma_{0}>0$ and $K_{1}>0$ such that
\begin{equation*}
\underline{w}(x,t)\leq w_{\sigma}(x,t)\leq \bar{w}_{\sigma}(x,t)\ \ \ \forall(x,t)\in \bar{\Omega}\times[0,T],
\end{equation*}
for all $\sigma\leq\sigma_{0}$.

Thus, we obtain
\begin{equation}
|w_{\sigma}(x,t)|\leq e^{3Kt}(K_{1}\sigma^{2-m}t)\ \ \ \forall(x,t)\in \bar{\Omega}\times[0,T],
\end{equation}
which implies that there is $C(T)>0$ such that
\begin{equation*}
\sup\limits_{t\in[0,T]}\|u_{\sigma}(\cdot,t)-v(\cdot,t)\|_{L^{\infty}
(\Omega)}\leq C(T)\sigma^{2-m}
\end{equation*}
for all $0<\sigma\leq\sigma_{0}$.
This ends the proof.
\end{proof}

\section*{Acknowledgments}
\noindent

The authors would like to thank the referees for their careful reading and valuable suggestions. The second author was partially supported by NSF of China (11731005, 11671180) and the third author was partially supported by NSF of China (11601205).


\end{document}